%
%
%
%

\documentclass{amsart}
\usepackage{amsmath, amsfonts}
\usepackage{amsthm}
\usepackage{amssymb}

\newtheorem{theorem}{Theorem}
\newtheorem{lemma}{Lemma}
\newtheorem{claim}{Claim}[section]

\newtheorem{case}{Case}
\newtheorem{casenew}{Case}
\newtheorem{corollary}{Corollary}
\newtheorem{conjecture}{Conjecture}

\theoremstyle{definition}

\theoremstyle{remark}

\numberwithin{equation}{section}

\begin{document}

\title{Ramsey numbers of trees and unicyclic graphs versus fans}

\author{Matthew Brennan}
\address{Massachusetts Institute of Technology, Cambridge, MA 02139}
\email{brennanm@mit.edu}

\keywords{Ramsey number, trees, fans, unicyclic}

\maketitle

\begin{abstract}
The generalized Ramsey number $R(H, K)$ is the smallest positive integer $n$ such that for any graph $G$ with $n$ vertices either $G$ contains $H$ as a subgraph or its complement $\overline{G}$ contains $K$ as a subgraph. Let $T_n$ be a tree with $n$ vertices and $F_m$ be a fan with $2m + 1$ vertices consisting of $m$ triangles sharing a common vertex. We prove a conjecture of Zhang, Broersma and Chen for $m \ge 9$ that $R(T_n, F_m) = 2n - 1$ for all $n \ge m^2 - m + 1$. Zhang, Broersma and Chen showed that $R(S_n, F_m) \ge 2n$ for $n \le m^2 -m$ where $S_n$ is a star on $n$ vertices, implying that the lower bound we show is in some sense tight. We also extend this result to unicyclic graphs $UC_n$, which are connected graphs with $n$ vertices and a single cycle. We prove that $R(UC_n, F_m) = 2n - 1$ for all $n \ge m^2 - m + 1$ where $m \ge 18$. In proving this conjecture and extension, we present several methods for embedding trees in graphs, which may be of independent interest.
\end{abstract}

\section{Introduction}

Given two graphs $H$ and $K$, the generalized Ramsey number $R(H, K)$ is the smallest positive integer $n$ such that for any graph $G$ with $n$ vertices, either $G$ contains $H$ as a subgraph or the complement $\overline{G}$ of $G$ contains $K$ as a subgraph. When both $H$ and $K$ are complete graphs, $R(H, K)$ is the classical Ramsey number. Because classical Ramsey numbers are difficult to determine, Chv\'{a}tal and Harary proposed to study generalized Ramsey numbers of graphs other than complete graphs through a series of papers in 1972 and 1973 \cite{chvatal1972generalized1, chvatal1972generalized2, chvatal1973generalized}. 

Generalized Ramsey numbers have since been well studied for a variety of graphs, including trees and fans. Chv\'{a}tal determined the Ramsey number of trees versus complete graphs, showing that $R(T_n, K_m) = (n - 1)(m - 1) + 1$ for positive integers $m$ and $n$ \cite{chvatal1977tree}. Burr, Erd\H{o}s, Faudree, Rousseau and Schelp determined the Ramsey number of large trees versus odd cycles, showing that $R(T_n, C_m) = 2n - 1$ for odd $m \ge 3$ and $n \ge 756 m^{10}$ \cite{burr1982ramsey}. Recently, we showed this result is also true for smaller trees satisfying $n \ge 25m$ \cite{brennan2016}. Salman and Broersma determined the Ramsey number of paths versus fans, finding $R(P_n, F_m)$ for various ranges of $n$ and $m$ \cite{salman2006path}. Shi determined the Ramsey number of cycles versus fans, showing that $R(C_n, F_m) = 2n - 1$ for all $n > 3m$ \cite{shi2010ramsey}. In \cite{li1996fan}, Li and Rousseau proved an upper bound on the Ramsey number of fans versus complete graphs, showing
$$R(F_m, K_n) \le (1 + o(1)) \frac{n^2}{\log n}.$$
A survey of Ramsey numbers and related lower bounds can be found in \cite{radziszowski1994small}.

There have also been general lower bounds shown to hold for Ramsey numbers. In 1981, Burr proved the following lower bound in terms of the chromatic number $\chi(G)$ of a graph $G$ and its chromatic surplus $s(G)$ -- the minimum number of vertices in a color class over all proper vertex colorings of $G$ using $\chi(G)$ colors.

\begin{theorem}[Burr \cite{burr1981ramsey}]
If $H$ is a connected graph with $n$ vertices and $s(K)$ is the chromatic surplus of the graph $K$, then for $n \ge s(K)$ we have
$$R(H,K) \ge (n-1)(\chi(K) - 1) + s(K).$$
\end{theorem}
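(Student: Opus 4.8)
The plan is to establish the lower bound by the standard device of exhibiting, for every $n \ge s(K)$, a single graph $G$ on $N := (n-1)(\chi(K)-1) + s(K) - 1$ vertices that contains no copy of $H$ while its complement $\overline{G}$ contains no copy of $K$; by the definition of the generalized Ramsey number this forces $R(H,K) \ge N + 1 = (n-1)(\chi(K)-1) + s(K)$. The natural candidate for $G$ is a disjoint union of cliques, so that $\overline{G}$ is complete multipartite and both non-containment conditions become transparent combinatorial statements.

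Write $\chi = \chi(K)$ and $s = s(K)$, and let $G$ be the vertex-disjoint union of $\chi - 1$ copies of the complete graph $K_{n-1}$ together with one copy of $K_{s-1}$; this has exactly $(\chi-1)(n-1) + (s-1) = N$ vertices. That $G$ contains no $H$ uses both the connectedness of $H$ and the hypothesis $n \ge s$: every component of $G$ is a clique on at most $n-1$ vertices --- the extra copy of $K_{s-1}$ having $s-1 \le n-1$ vertices precisely because $n \ge s$ --- so $G$ has no connected subgraph on $n$ vertices, and hence no copy of the connected $n$-vertex graph $H$.

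The substantive step is to verify that $\overline{G}$ contains no copy of $K$. Here $\overline{G}$ is the complete multipartite graph whose parts are the vertex sets of the cliques of $G$: that is, $\chi - 1$ parts of size $n-1$ and one part of size $s-1$, for $\chi$ parts in total. If $K$ were a subgraph of $\overline{G}$, then intersecting $V(K)$ with these parts would partition $V(K)$ into independent sets of $K$, yielding a proper colouring of $K$ with at most $\chi$ colours; since $\chi(K) = \chi$, such a colouring must use all $\chi$ colours, one per part, so the colour class arising from the part of size $s-1$ is a nonempty colour class of size at most $s-1 < s = s(K)$. This contradicts the definition of the chromatic surplus as the minimum, over all proper $\chi(K)$-colourings of $K$, of the size of the smallest colour class. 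Hence no such embedding of $K$ exists, $G$ witnesses $R(H,K) > N$, and the claimed bound follows. I expect this complement argument to be the only delicate point; the rest is bookkeeping, with the constraint $n \ge s(K)$ entering exactly so that the $K_{s-1}$ component stays too small to create a copy of $H$.
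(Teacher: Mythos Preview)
Your proof is correct and is precisely the standard construction underlying Burr's bound. The paper does not give a self-contained proof of this theorem (it is cited from \cite{burr1981ramsey}), but it does illustrate the special case $K = F_m$, where $\chi = 3$ and $s = 1$, by exhibiting $K_{n-1,n-1}$; its complement is two disjoint copies of $K_{n-1}$, which is exactly your construction specialized to that case.
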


The Ramsey numbers of trees versus odd cycles, of cycles versus fans and of trees versus complete graphs determined by Burr \textit{et al.}, Shi and Chv\'{a}tal, respectively, achieve Burr's lower bound. Note that for a fan or odd cycle $K$, $\chi(K) = 3$ and $s(K) = 1$ and thus Theorem 1 implies that $R(T_n, F_m) \ge 2n - 1$ for all $m$ and $n \ge s(K) = 1$. This lower bound can be seen directly by considering the complete bipartite graph $K_{n-1, n-1}$. Since $K_{n-1, n-1}$ is triangle-free, it does not contain $F_m$ as a subgraph. Furthermore, $\overline{K_{n-1, n-1}}$ consists of two connected components of size $n - 1$ and thus does not contain $T_n$ as a subgraph.

In 2015, Zhang, Broersma and Chen showed that the lower bound from Theorem 1 is tight for large trees and stars versus fans, proving the following two theorems. Here, $S_n$ denotes a star on $n$ vertices consisting of an independent set of $n - 1$ vertices all adjacent to a single vertex.

\begin{theorem}[Zhang, Broersma, Chen \cite{zhang2015ramsey}]
$R(T_n, F_m) = 2n-1$ for all integers $m$ and $n \ge 3m^2 - 2m - 1$.
\end{theorem}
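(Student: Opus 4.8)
The lower bound $R(T_n,F_m)\ge 2n-1$ is witnessed by $G=K_{n-1,n-1}$: it is triangle-free, so $F_m\not\subseteq G$, while $\overline G$ is a disjoint union of two cliques on $n-1$ vertices and hence contains no connected graph on $n$ vertices. So the real content is the upper bound: if $G$ has $N=2n-1$ vertices and $\overline G$ contains no $F_m$, then $G\supseteq T_n$. I would build on two tools. The first is the standard greedy/BFS embedding: if $\delta(G)\ge n-1$ then $G$ contains \emph{every} tree on $n$ vertices (embed $T_n$ in a rooted BFS order; when a vertex is placed, its already-embedded parent has at least $n-1$ neighbours and at most $n-2$ of them are used). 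The second converts low degree into a large clique: since $\overline G$ is $F_m$-free, for every vertex $u$ the graph $\overline G[\overline N_G(u)]$ has matching number at most $m-1$, so deleting a minimum vertex cover (of size $\le 2m-2$) leaves a clique of $G$ of size at least $d_{\overline G}(u)-2m+2=2n-2-d_G(u)-2m+2$ inside $\overline N_G(u)$.

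First I would clear two easy ranges. If $\delta(G)\ge n-1$, the greedy tool finishes. If $G$ has a vertex $v$ with $d_G(v)\le n-2m$, the clique tool applied to $v$ gives a clique of $G$ of size at least $2n-d_G(v)-2m\ge n$, and $T_n\subseteq K_n$. Hence we may assume $n-2m+1\le\delta(G)\le n-2$: the minimum degree is within $2m-2$ of the greedy threshold, and applying the clique tool at a minimum-degree vertex $v$ still produces a clique $Q\subseteq\overline N_G(v)$ with $n-2m+2\le|Q|\le n-1$, disjoint from $\{v\}\cup N_G(v)$.

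The heart of the proof is embedding $T_n$ in this narrow regime, and I would organise it by the shape of $T_n$. If $T_n$ has few leaves it contains a long bare path, and in the extreme case $T_n=P_n$ one can appeal to a Dirac-type longest-path bound: with $\delta(G)\ge n-2m+1$ the (essentially unique) component $C$ of $G$ with $|C|\ge n$ contains a path on at least $\min(|C|,2\delta(G)+1)\ge 2n-4m+3\ge n$ vertices. In general, write $T_n=T^\circ\cup\Lambda$ with $\Lambda$ the set of leaves; since $|T^\circ|=n-|\Lambda|\le n-2m+2$ whenever $|\Lambda|\ge 2m-2$, the greedy tool embeds $T^\circ$, and one arranges this embedding so that the (few) vertices of $T^\circ$ with many leaf-children land on high-degree vertices of $G$ or inside $Q$; then attach the $|\Lambda|$ leaves one vertex at a time via Hall's theorem, the at least $n-1$ vertices of $G$ not used by $T^\circ$ forming the pool of attachment points. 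The configuration that genuinely fights back is when $T_n$ concentrates almost all its leaves at one or two vertices (star, double broom), because then the embedding forces $G$ to contain a vertex of degree nearly $n-1$ or a clique of size nearly $n$; when $\Delta(G)\le n-2$ this has to be extracted by combining and extending the cliques produced by the clique tool at several low-degree vertices, and it is precisely this clique-merging together with the bookkeeping of the $O(m)$ vertices lost at each of up to $O(m)$ steps that forces the hypothesis $n\ge 3m^2-2m-1$.

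The main obstacle is therefore this last step: handling every shape of $T_n$ at once inside the narrow degree window $n-2m+1\le\delta(G)\le n-2$ while keeping the required lower bound on $n$ only quadratic in $m$. The sharpest point is showing that if no vertex of $G$ has degree at least $n-1$ then the complement structure still forces a clique of size $\ge n$ — equivalently, that an $F_m$-free graph on $2n-1$ vertices with minimum degree at least $n$ has an independent set of size $n$ — and, more generally, that the clique $Q$ can be augmented by the leftover at most $2m-2$ vertices compatibly with \emph{some} embedding of $T_n$; once that is in hand, everything else is routine greedy embedding and applications of Hall's theorem.
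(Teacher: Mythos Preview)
This theorem is not proved in the present paper; it is quoted from Zhang, Broersma and Chen \cite{zhang2015ramsey} as background for the stronger Theorem~4. So there is no proof here to compare against directly, but it is still worth assessing your outline on its own terms and against the method the paper uses for Theorem~4.

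Your proposal is a strategy sketch, not a proof, and you say so yourself: the entire substance lies in the ``narrow degree window'' $n-2m+1\le\delta(G)\le n-2$, and there you only handle $T_n=P_n$, hand-wave the leaf-attachment step (``via Hall's theorem'' with no verification of Hall's condition), and leave the star/double-broom case to an unproved structural claim. The specific claim you single out as the ``sharpest point'' --- that an $F_m$-free graph on $2n-1$ vertices with minimum degree at least $n$ must have an independent set of size $n$ --- is neither proved nor obviously true for general $m$, and in any case would not by itself handle trees whose shape is intermediate between a path and a star. So as written there is a genuine gap: the heart of the argument is missing, not merely routine.

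It may help to see how the paper attacks the stronger Theorem~4, since the difficulty is the same. Rather than trying to enlarge a single clique $Q$ to size $n$, the paper (working with the roles of $G$ and $\overline G$ swapped) produces \emph{two} disjoint cliques $U_1,U_2$ of size about $n-2m$ each by running the maximum-matching argument twice, the second time inside the complement of the first clique (Claim~3.3). It then shows that $U_1$ and $U_2$ are joined by a nearly complete bipartite graph in the $F_m$-free colour (Claim~3.4), and that every leftover vertex has few neighbours into one of $U_1,U_2$ (Claims~3.5--3.6). With this two-block structure in hand, the embedding of $T_n$ is done by a careful case split on $|L(T_n)|$, using Lemma~1 (on degree-two vertices far from leaves) and Lemma~2 (a balanced separator) rather than a direct Hall argument. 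The quadratic bound on $n$ comes out of the inequalities in those final embeddings, not from a clique-merging step. Your single-clique-plus-Hall route may be salvageable for the weaker $3m^2-2m-1$ threshold, but you would need to actually carry out the missing case analysis; the paper's two-clique route sidesteps that difficulty.
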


\begin{theorem}[Zhang, Broersma, Chen \cite{zhang2015ramsey}]
$R(S_n, F_m) = 2n-1$ for all integers $n \ge m^2 - m + 1$ and $m \neq 3, 4, 5$, and this lower bound is the best possible. Moreover, $R(S_n, F_m) = 2n - 1$ for $n \ge 6m - 6$ and $m = 3, 4, 5$.
\end{theorem}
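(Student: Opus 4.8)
The plan is to establish $R(S_n,F_m)=2n-1$ in the stated ranges together with the optimality of the threshold. For the lower bound $R(S_n,F_m)\ge 2n-1$, note $\chi(F_m)=3$ and the chromatic surplus $s(F_m)=1$ (colour the hub one colour and, in each triangle, the two other vertices with the remaining two colours), so Burr's Theorem~1, applied to the connected graph $S_n$, gives $R(S_n,F_m)\ge(n-1)\cdot 2+1=2n-1$. Optimality of the threshold comes from a construction: for any $m\ge 2$ and $2m-2\le n\le m^2-m$, let $H$ have vertex set $A\cup B$ with $|A|=n-1$, $|B|=n$, $A$ independent in $H$, $H[B]$ a vertex-disjoint union of $m-1$ stars $K_{1,\ell_1},\dots,K_{1,\ell_{m-1}}$ with each $1\le\ell_i\le m-1$ (possible exactly because $2(m-1)\le n\le m(m-1)$), and all of $A$ joined to all of $B$. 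Then $\delta(H)=n$, so $G:=\overline H$ on $2n-1$ vertices has $\Delta(G)=n-2$ and contains no $S_n$; and $H$ is $F_m$-free, since the neighbourhood of a vertex of $A$ induces the union of $m-1$ stars (matching number $m-1$), that of a star-centre induces $K_{n-1,\ell_i}$ (matching number $\ell_i\le m-1$), and that of a leaf induces a star (matching number $1$). Hence no neighbourhood contains an $m$-matching, $\overline G=H$ contains no $F_m$, and $R(S_n,F_m)\ge 2n$; taking $n=m^2-m$ shows the threshold $m^2-m+1$ is best possible.

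For the upper bound $R(S_n,F_m)\le 2n-1$, let $G$ be a graph on $2n-1$ vertices with $\Delta(G)\le n-2$ and put $H:=\overline G$, so $\delta(H)\ge n>(2n-1)/2$; we must find a vertex whose $H$-neighbourhood has matching number $\ge m$. A degree reduction comes first: if some $x$ has $\deg_H(x)\ge n+m-1$, then each $w\in N_H(x)$ has $|N_H(w)\cap N_H(x)|\ge\deg_H(w)+\deg_H(x)-(2n-1)\ge m$, so $H[N_H(x)]$ has minimum degree $\ge m$ on $\ge 2m$ vertices, and a graph on $t$ vertices with minimum degree $d$ has a matching of size at least $\min\{d,\lfloor t/2\rfloor\}$ (a short augmenting-path count), so $\nu(H[N_H(x)])\ge m$; equivalently one may quote Dirac's theorem for $H-x$. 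So assume $n\le\deg_H(u)\le n+m-2$ for all $u$ and, for contradiction, that $H$ has no $F_m$; the goal becomes $n\le m^2-m$. Fix $v$, write $N=N_H(v)$ and $\overline N=V\setminus(\{v\}\cup N)$, so $|\overline N|\in[n-m,n-2]$. Since $\nu(H[N])\le m-1$, pick a maximum matching $x_1y_1,\dots,x_ry_r$ of $H[N]$, $r\le m-1$, and let $I=N\setminus\{x_1,y_1,\dots,x_r,y_r\}$: then $I$ is independent, $|I|\ge n-2m+2$, and by maximality no two distinct $u,u'\in I$ satisfy $u\sim x_j$, $u'\sim y_j$, so after relabelling each $y_j$ has at most one neighbour in $I$. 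Thus $H[N]$ is ``union-of-stars-like'', with centres among the $x_j$.

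In the genuine union-of-stars case one argues exactly as in the construction: the neighbourhood of a centre $x_j$ induces a complete bipartite graph whose matching number equals the number of leaves of that star, so each star has $\le m-1$ leaves, and $m-1$ stars of order $\le m$ covering $N$ give $n\le|N|\le m(m-1)$, a contradiction. In general one converts any deviation from this picture into an $m$-matching. Using $\delta(H)\ge n$, each $u\in I$ has $\le 2r+1\le 2m-1$ neighbours in $\{v\}\cup N$ and hence misses $\le 2m-3$ vertices of $\overline N$; a double count then yields a vertex $v'\in\overline N$ adjacent to all but $O(m)$ vertices of $I$. Refining the maximum matching to the Gallai--Edmonds decomposition of $H[N]$ (at most $m-1$ non-trivial factor-critical components, plus $\ge n-O(m)$ isolated vertices forming an independent set whose only $N$-neighbours lie in the small essential set $A$), and playing the matching-bound hypothesis off against the vertices $v$, $v'$ and the $x_j$, one shows that any configuration not of the extremal form --- an $I$-vertex adjacent to two centres, a matching edge whose ``leaf'' endpoint carries extra neighbours, an $\overline N$-vertex with too few neighbours in $I$, an $\overline N$ that is not independent, and so on --- produces an $m$-matching inside some neighbourhood, contradicting the no-$F_m$ assumption, while the extremal form itself forces $n\le m^2-m$. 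Either way $n\le m^2-m$, contradicting $n\ge m^2-m+1$, so $H\supseteq F_m$ and $R(S_n,F_m)\le 2n-1$.

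For $m\in\{3,4,5\}$ the crude error terms above --- the slack $2m-3$ between $|\overline N|$ and an $I$-vertex's $\overline N$-degree, the up-to-$(m-1)$ non-trivial components, and the $O(m)$ loss in the double count --- are too large compared with the target $m^2-m$, so the scheme closes only once $n$ exceeds a larger multiple of $m$; bookkeeping the constants shows $n\ge 6m-6$ suffices, which gives the ``moreover'' clause. The main obstacle is precisely this last structural dichotomy: pinning down the exact constant $m^2-m$ is equivalent to showing that the star construction of the first paragraph is essentially the only $F_m$-free near-extremal graph with $\delta(H)\ge n$, and the technical heart is the case analysis that turns every departure from the union-of-stars structure of some neighbourhood into an explicit $m$-matching.
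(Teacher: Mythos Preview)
The paper does not contain a proof of this theorem: Theorem~3 is quoted from \cite{zhang2015ramsey} as an input to the main results and is never argued in the present paper. So there is no ``paper's own proof'' to compare your attempt against, and your proposal should be judged on its own.

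Your lower bound via Burr's theorem and the optimality construction are both correct. The graph $H$ you build (an independent $A$ joined completely to $B$, with $H[B]$ a disjoint union of $m-1$ stars of bounded size) really does have $\delta(H)=n$ and is $F_m$-free, so $R(S_n,F_m)\ge 2n$ whenever $2m-2\le n\le m^2-m$; this is exactly the known extremal family and your verification of the three neighbourhood types is clean.

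The upper bound, however, is not a proof but a sketch with a genuine gap. After the degree reduction to $n\le\deg_H(u)\le n+m-2$ (which is fine) and the observation that $N=N_H(v)$ carries a maximum matching of size $r\le m-1$ with a large independent leftover $I$, you assert that ``any configuration not of the extremal form\dots produces an $m$-matching inside some neighbourhood, while the extremal form itself forces $n\le m^2-m$.'' That dichotomy is the whole content of the theorem, and you do not carry it out: you invoke Gallai--Edmonds and a pivot vertex $v'\in\overline N$, list several types of deviation, and declare each one yields an $F_m$, without exhibiting the matchings or even specifying in which neighbourhood they are to be found. Concretely, the passage from ``$H[N]$ looks like a union of $\le m-1$ stars plus a small essential set'' to ``$|N|\le m(m-1)$'' requires showing that each star has at most $m-1$ leaves, and for that you must analyse the neighbourhood of a star centre $x_j$, which contains vertices of $\overline N$ as well; bounding that matching number by $m-1$ is exactly where the interaction with $\overline N$ and the second pivot $v'$ has to be made precise, and you have not done so. The ``$O(m)$'' error terms you mention are never quantified, so the claim that the argument closes for $m\ge 6$ at the exact threshold $n\ge m^2-m+1$ (rather than, say, $n\ge m^2$) is unsupported. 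Your final paragraph effectively concedes this: you identify the case analysis as ``the technical heart'' but do not perform it.
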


Because it is generally believed that $R(T_n, G) \le R(S_n, G)$ for any graph $G$, Zhang, Broersma and Chen made the following conjecture based on Theorem 3.

\begin{conjecture}[Zhang, Broersma, Chen \cite{zhang2015ramsey}]
$R(T_n, F_m) = 2n - 1$ for all integers $m \ge 6$ and $n \ge m^2 - m + 1$.
\end{conjecture}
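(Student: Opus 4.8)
The plan is to establish the conjecture for $m\ge 9$ (the value $9$ is chosen so that the case analysis below closes with room to spare). The matching lower bound is the triangle-free graph $K_{n-1,n-1}$ already described, so it suffices to prove the upper bound: if $G$ has $N=2n-1$ vertices and $\overline G$ contains no $F_m$, then $G$ contains every tree $T_n$ on $n$ vertices. I would start from the structural consequence of $F_m$-freeness. Since $F_m=K_1+mK_2$, a copy of $F_m$ in $\overline G$ is a vertex $v$ together with a matching of size $m$ inside $N_{\overline G}(v)=\overline N_G(v)$, the set of non-neighbours of $v$ in $G$; so $\overline G$ being $F_m$-free means that for every $v$ this non-neighbourhood spans, in $\overline G$, a graph of matching number at most $m-1$. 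Deleting the at most $2m-2$ endpoints of a maximum such matching leaves an independent set of $\overline G$, i.e. a clique of $G$, so for every $v$ we may write $\overline N_G(v)=W_v\cup C_v$ with $|W_v|\le 2m-2$ and $C_v$ a clique of $G$. Two quick reductions follow. If $\delta(G)\ge n-1$ then $G$ contains every $T_n$ by the standard leaf-by-leaf greedy embedding (at each of the $n$ steps the image of the current parent has at least $n-1$ neighbours and at most $n-2$ are occupied), so I may assume $\delta(G)\le n-2$. If $\Delta(G)\le n-2$ then $G$ contains no $S_n$, so Theorem 3 (valid since $m\ge 9>5$ and $n\ge m^2-m+1$) forces $\overline G\supseteq F_m$, a contradiction; so I may assume $\Delta(G)\ge n-1$. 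Taking $v$ with $\deg_G(v)=\delta(G)\le n-2$ gives $|\overline N_G(v)|\ge n$, hence $C:=C_v$ is a clique with $|C|=n-\ell$ for some $1\le\ell\le 2m-2$ (if $\ell\le 0$ then $K_n\subseteq G$ and we are done). Write $R=V(G)\setminus C$, so $|R|=n+\ell-1$.

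The core of the proof is to embed $T_n$ into $G$ using the large clique $C$, and I would isolate two mechanisms. \textbf{Mechanism 1 (one clique, a few outsourced vertices).} Choose a connected subtree $S\subseteq T_n$ on $n-\ell$ vertices — most naturally $S=T_n$ minus $\ell$ leaves when $T_n$ has at least $\ell$ leaves, and a peeled-off union of short pendant paths otherwise; the leftover forest then has at most $\ell$ components, each joined to $S$ by a single edge. Embed $S$ bijectively onto $C$ (any tree order works since $C$ is a clique) and place the leftover components into $R$. This succeeds once a Hall/SDR-type condition holds: no subclique $C'\subseteq C$ arising as a set of attachment images may have fewer than the required number of $G$-neighbours in $R$. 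If this fails, one obtains $C'\subseteq C$ and $R'\subseteq R$ with $|R'|\ge|R|-(2m-2)\ge n-2m+1$ such that $C'\cup R'$ is complete bipartite in $\overline G$; taking $w\in C'$ we get $R'\subseteq\overline N_G(w)$, and since $|R'|\ge m^2-3m+2$ one finds a matching of size $m$ inside $\overline G[R']$ — unless $\overline G[R']$ has no such matching, in which case $R'$ is a near-clique of $G$ and we are returned to the two-clique situation of Mechanism 2 — so $\overline G\supseteq F_m$, a contradiction. Mechanism 1 thus handles every $T_n$ whose attachment structure can be made suitably spread out; in particular all trees with no vertex of very large degree and all "path-like" trees.

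\textbf{Mechanism 2 (two bridged cliques, or a high-degree clique vertex).} Applying the structural lemma to a vertex $v'\in C$ shows that either some $v'\in C$ has $\deg_G(v')\ge n-1$, or (since $v'$ is adjacent to all of $C\setminus\{v'\}$, so $\overline N_G(v')\cap C=\emptyset$) the set $R$ contains a second clique $C_2$, disjoint from $C$, of size at least $n-2m+2$. In the two-clique case $|C|+|C_2|\ge 2n-4m+4\ge n$ because $n\ge m^2-m+1$; if some edge $c_1c_2$ of $G$ has $c_1\in C$ and $c_2\in C_2$, split $T_n$ along an edge into subtrees $T',T''$ with $|T'|\le|C|$ and $|T''|\le|C_2|$ and embed $T'$ into $C$ with the connecting vertex on $c_1$ and $T''$ into $C_2$ with the connecting vertex on $c_2$; for trees with no balanced edge-cut one peels a short pendant path into $C_2$ instead. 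If no edge of $G$ bridges $C$ and $C_2$, then $C$ is almost a connected component and one again extracts a large matching in some non-neighbourhood. It remains to treat "star-like" $T_n$ against a high-degree vertex of $G$ — a $c^*\in C$ with $\deg_G(c^*)\ge n-1$, or the global $u$ with $\deg_G(u)\ge n-1$. Here I would use the embedding observation that adjoining to a clique $C$ one external vertex $r$ with a neighbour $c^*\in C$ already yields a graph containing every tree on $|C|+1$ vertices (send a leaf to $r$, its neighbour to $c^*$, the rest into $C$), together with its iterates — which need the external vertices to be suitably linked, e.g. forming a short pendant path on $c^*$ or a fan of pendant vertices on $c^*$ — and the fact that every tree on at least four vertices has two leaves with a common neighbour or a degree-two vertex adjacent to a leaf. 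Mapping the high-degree vertex of $T_n$ to the high-degree vertex of $G$ and filling the remaining few, short branches into $C$ then completes the embedding.

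I expect the main obstacle to be precisely this last step: matching the several extremal shapes of $T_n$ (stars, brooms, spiders, paths) against the several near-extremal configurations of $G$ at once, while keeping every count inside the budget set by $|W_v|\le 2m-2$ and $n\ge m^2-m+1$. It is the regime where $\ell$ is as large as $2m-2$ and the embedding forces on the order of $m$ disjoint edges of $\overline G$ inside a single non-neighbourhood that pins the quadratic threshold $n\ge m^2-m+1$ — best possible in view of Theorem 3 — and essentially all of the tree-embedding methods advertised in the abstract are deployed to carry the various cases of this step through with that constant.
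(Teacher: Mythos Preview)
Your skeleton matches the paper's: extract a large clique $C$ in $G$ from the bounded matching number in non-neighbourhoods of the $F_m$-free colour, then a second such clique, and embed $T_n$. But two of your three mechanisms have concrete gaps. In Mechanism~1 the contradiction branch is vacuous --- since $R'\subseteq N_{\overline G}(w)$ and $\overline G$ is $F_m$-free, there is \emph{never} an $m$-matching in $\overline G[R']$ --- so the mechanism only ever manufactures the second clique and never finishes on its own. In Mechanism~2 the edge-split fails outright for stars and spiders, where every edge-removal leaves one side of size $1$; the paper replaces this with a \emph{vertex}-split (its Lemma~2: some $x$ has $T_n-x$ partitionable into two forests each of size in $[(n-1)/3,\,2(n-1)/3]$), and even then uses it only to prove structural density facts between the two cliques (Claims~3.4--3.5), never to embed $T_n$ across them directly.

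The final embedding --- the step you correctly flag as the main obstacle --- is handled in the paper by a mechanism absent from your plan. After showing (Claim~3.6) that each of the $2m-3$ leftover vertices outside $X_1\cup Y_1\cup X_2\cup Y_2$ has at most $m-1$ neighbours, in the $F_m$-free colour, in one of the two cliques, the paper obtains $m-1$ leftover vertices $Z_1$ each nearly fully $G$-joined to the first clique, selects a set $D$ of exactly $m-1$ vertices of $T_n$, maps $D$ onto $Z_1$, and embeds $T_n-D$ into a refined set $X_1\cup Y_1$ using the tree's bipartition (Claim~3.2). Trees with a vertex adjacent to at least $m-1$ leaves are dispatched beforehand (Claim~3.1) via Theorem~3 and the bound $\delta(G)\ge n-m$. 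The choice of $D$ is the crux: leaves when $|L(T_n)|\ge m+1$, and degree-$2$ vertices in $A(T_n)$ with pairwise disjoint neighbourhoods --- supplied by the paper's Lemma~1 --- when $|L(T_n)|\le m$; in both cases $D$ is arranged so that no vertex of $T_n$ is adjacent to more than $m-4$ or $m-5$ elements of $D$, which is exactly what makes the greedy matching of $N_{T_n}(D)$ into $X_1$ succeed with the budget $n\ge m^2-m+1$. Your iterated adjoin-one-vertex observation and the high-degree-vertex case supply none of this count, and without Lemmas~1 and~2 and the $Z_1/D$ scheme your plan, as written, does not close even for $m\ge 9$.
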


Theorem 3 yields that if $n \le m^2 - m$ then $R(S_n, F_m) \ge 2n$, implying $n \ge m^2 - m + 1$ is the best achievable lower bound on $n$ in terms of $m$ over which $R(T_n, F_m) = 2n - 1$ is true \cite{zhang2015ramsey}. In this paper, we prove Conjecture 1 for the case $m \ge 9$. Specifically, we prove the following theorem.

\begin{theorem}
$R(T_n, F_m) = 2n - 1$ for all $n \ge m^2 - m + 1$ for $m \ge 9$.
\end{theorem}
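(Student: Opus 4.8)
The lower bound $R(T_n,F_m)\ge 2n-1$ is witnessed by $K_{n-1,n-1}$, as explained in the introduction, so the work is in the upper bound: every graph $G$ on $2n-1$ vertices whose complement $\overline{G}$ contains no copy of $F_m$ must contain every tree $T_n$ on $n$ vertices. My plan is to first extract structural information from the $F_m$-free hypothesis. For a vertex $v$, let $\overline{N}(v)$ denote its non-neighbourhood in $G$, which is exactly the neighbourhood of $v$ in $\overline{G}$. Since $\overline{G}$ has no $F_m$ centred at $v$, the graph $\overline{G}[\overline{N}(v)]$ has matching number at most $m-1$; the at most $2m-2$ vertices covered by a maximum matching meet every edge of $\overline{G}[\overline{N}(v)]$, so deleting them leaves a set that is independent in $\overline{G}$, i.e.\ a clique in $G$. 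Hence each $\overline{N}(v)$ is a clique of $G$ together with at most $2m-2$ further vertices. Applying this to a minimum-degree vertex gives $\omega(G)\ge 2n-\delta(G)-2m$, and since $K_{2m+1}\supseteq F_m$ we also obtain $\omega(\overline{G})\le 2m$, i.e.\ $\alpha(G)\le 2m$.

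Next I would clear away two easy regimes. If $\omega(G)\ge n$, a clique on $n$ vertices already contains every $T_n$. If $\delta(G)\ge n-1$, then $T_n$ embeds greedily: list $V(T_n)$ so that each vertex after the first has a unique earlier neighbour, and embed one vertex at a time, which is always possible since fewer than $n$ vertices of $G$ have been used while the relevant embedded vertex has at least $n-1$ neighbours. Outside these regimes $\omega(G)\le n-1$ and $\delta(G)\le n-2$, so the inequality above yields a clique $K$ with $n-2m+2\le k:=|K|\le n-1$. Put $r:=n-k\in[1,2m-2]$ and $R:=V(G)\setminus K$, so $|R|=2n-1-k\ge n$; the point is that $r$ is tiny next to $n$ because $n\ge m^2-m+1$.

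The heart of the proof is then to embed $T_n$ by placing the bulk of the tree in the near-spanning clique $K$ and routing the remaining $r=O(m)$ vertices through $R$. Order $V(T_n)=w_1,\dots,w_n$ so that each $w_i$ with $i\ge 2$ has a unique earlier neighbour; then $T^{\circ}:=T_n[\{w_1,\dots,w_k\}]$ is a subtree on $k$ vertices and $w_{k+1},\dots,w_n$ form a few pendant pieces hanging off $T^{\circ}$. Embed $T^{\circ}$ into $K$ by any injection ($K$ is complete) and extend to the pendant pieces using vertices of $R$. This goes through directly when the attachment points in $K$ have enough neighbours in $R$, which is the generic case. When $K$ is instead poorly connected to $R$ — in the extreme, a separate component of $G$ — one invokes the $F_m$-free hypothesis again: as soon as a single vertex of $K$ has no neighbour in $R$, its non-neighbourhood contains all of $R$, forcing $\overline{G}[R]$ to have matching number at most $m-1$ and hence $G[R]$ to contain a clique of size at least $|R|-2m+2\ge n-2m+2$, a second almost-spanning clique. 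Iterating this and passing to a component that actually meets $T_n$ reduces to a connected host carrying a near-spanning clique with enough external edges, after which the routing can be completed. The shape of $T_n$ is handled throughout by the standard dichotomy that $T_n$ either has at least $r$ leaves whose deletion keeps it connected, or else contains a long pendant path that can absorb the $r$ excess vertices; this is what dictates the choice of $T^{\circ}$ and keeps $\alpha(G)\le 2m$ strong enough to push the pendant pieces through $R$.

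I expect the real obstacle to be exactly this last point: routing the $O(m)$ leftover vertices of the tree out of an almost-spanning clique while respecting the $F_m$-free structure, and in particular handling the poorly connected configurations uniformly. Making all the attendant degree and cardinality inequalities hold at once is what pins down the range $n\ge m^2-m+1$ and, here, the hypothesis $m\ge 9$.
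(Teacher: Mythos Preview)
Your skeleton is right and matches the paper: extract a large clique $K$ (the paper's $U_1$, in your complementary labelling), then a second large clique inside $R$ (the paper's $U_2$), and try to embed $T_n$ mostly into $K$ with $O(m)$ vertices routed outside. But the routing step, which you correctly flag as the obstacle, is where the plan breaks down, and the fix is not ``iterate and pass to a component''.

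The concrete problem is this. Once you have the two cliques $U_1$ and $U_2$, the paper proves (Claim~3.4, via the tree-splitting Lemma~2) that in your $G$ each vertex of $U_1$ has at most $2m-3$ neighbours in $U_2$: the two cliques are nearly anticomplete. Since $R$ is essentially $U_2$ together with a residual set $W$ of size $\le 4m-5$, a typical attachment point in $K$ may have very few neighbours in $R$, so ``attachment points have enough neighbours in $R$'' is \emph{not} the generic case---it is close to the extremal configuration $K_{n-1}\cup K_{n-1}$. Your fallback (``find a second clique and iterate'') just reproduces $U_2$, which is the obstruction, not a solution; there is nothing further to iterate to.

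What the paper does instead is route the $m-1$ leftover tree vertices into the \emph{residual} set, not into $U_2$. The key step (Claims~3.5--3.6, again using Lemma~2 and an $F_m$-argument) shows that every $w\in W$ has at most $m-1$ neighbours in one of $U_1,U_2$ in the $F_m$-free graph, hence is adjacent in your $G$ to all but $m-1$ vertices of that $U_i$. Pigeonhole then yields $m-1$ such vertices $Z_1$ well connected to $U_1$, and the $m-1$ deleted tree vertices are sent to $Z_1$. To make the reattachment work one must also (i) refine the clique to the structure $X_1\subseteq U_1\subseteq X_1\cup Y_1$ with $|X_1\cup Y_1|=n-m+1$ so that the host for $T_n-D$ is large enough and admits a flexible embedding (Claim~3.2), (ii) rule out trees with a vertex adjacent to $\ge m-1$ leaves via the star case Theorem~3 (Claim~3.1), and (iii) in the few-leaves case replace ``long pendant path'' by Lemma~1, which produces spread-out degree-$2$ vertices in $A(T_n)$ with no common neighbours---a single pendant path need not exist and would not give the independence needed for Claim~3.2. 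These are the ingredients your sketch is missing, and they are exactly what pins down $n\ge m^2-m+1$ and $m\ge 9$.
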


In \cite{zhang2015ramsey}, Zhang, Broersma and Chen also determined $R(T_n, K_{\ell - 1} + mK_2)$ as a corollary of Theorem 2. Here, $mG$ denotes the union of $m$ vertex-disjoint copies of $G$ and $G_1 + G_2$ is the graph obtained by joining every vertex of $G_1$ to every vertex of $G_2$ in $G_1 \cup G_2$. Zhang, Broersma and Chen identify $R(T_n, K_{\ell - 1} + mK_2)$ for $n \ge 3m^2 - 2m - 1$ by induction on $\ell$, using Theorem 2 as a base case. Their induction argument remains valid when Theorem 4 is used as the base case, yielding the following updated version of their corollary.

\begin{corollary}[Zhang, Broersma, Chen \cite{zhang2015ramsey}]
$R(T_n, K_{\ell - 1} + mK_2) = \ell(n - 1) + 1$ for $\ell \ge 2$ and $n \ge m^2 - m + 1$ where $m \ge 9$.
\end{corollary}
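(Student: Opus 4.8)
The plan is to follow verbatim the induction on $\ell$ of Zhang, Broersma and Chen, replacing their base case (Theorem 2) by Theorem 4. Since the inductive step below imposes no restriction on $n$ or $m$ beyond what the base case needs, the hypotheses $n \ge m^2 - m + 1$ and $m \ge 9$ propagate through all $\ell$. The base case $\ell = 2$ is precisely the statement $R(T_n, F_m) = 2n - 1$, because the graph $K_1 + mK_2$ --- a single vertex joined to $m$ disjoint edges --- is exactly the fan $F_m$; this is Theorem 4.

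For the lower bound, which holds for every $\ell \ge 2$, note that $\chi(K_{\ell - 1} + mK_2) = (\ell - 1) + 2 = \ell + 1$. The disjoint union $\ell K_{n - 1}$ has $\ell(n - 1)$ vertices and contains no $T_n$, since each of its components has only $n - 1$ vertices; and its complement is the complete $\ell$-partite graph $K_{n - 1, \dots, n - 1}$, which is $\ell$-colorable and hence contains no copy of the $(\ell + 1)$-chromatic graph $K_{\ell - 1} + mK_2$. Therefore $R(T_n, K_{\ell - 1} + mK_2) \ge \ell(n - 1) + 1$. (Equivalently, this is Theorem 1 with $\chi = \ell + 1$ and chromatic surplus $s = 1$.)

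For the upper bound, fix $\ell \ge 3$, assume the result for $\ell - 1$, and let $G$ be a graph on $N = \ell(n - 1) + 1$ vertices whose complement $\overline{G}$ contains no copy of $K_{\ell - 1} + mK_2 = K_1 + (K_{\ell - 2} + mK_2)$. For a vertex $v$ let $S_v = V(G) \setminus (\{v\} \cup N_G(v))$ be its set of non-neighbors in $G$; in $\overline{G}$ the vertex $v$ is joined to all of $S_v$, so the absence of $K_1 + (K_{\ell - 2} + mK_2)$ from $\overline{G}$ forces $\overline{G[S_v]}$ to contain no copy of $K_{\ell - 2} + mK_2$. If $|S_v| \ge (\ell - 1)(n - 1) + 1$ for some $v$, then the inductive hypothesis applied to $G[S_v]$ (with the same $n$ and $m$) yields $T_n \subseteq G[S_v] \subseteq G$, and we are done. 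Otherwise $|N_G(v)| \ge N - 1 - (\ell - 1)(n - 1) = n - 1$ for every $v$, so $G$ has minimum degree at least $n - 1$. A graph with minimum degree at least $n - 1$ contains every tree on $n$ vertices: embed $T_n$ one vertex at a time in an order in which each non-root vertex follows its parent, observing that when a new vertex is inserted its already-embedded parent has at least $n - 1$ neighbors in $G$, of which at most $n - 2$ are in use, so a free host vertex remains. Hence $T_n \subseteq G$, completing the induction.

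The substance of the corollary is entirely contained in Theorem 4; the induction is routine, and the only place demanding any care is checking that the inequality $|S_v| \ge (\ell - 1)(n - 1) + 1$ is exactly what makes the inductive hypothesis applicable with the parameters $n$ and $m$ unchanged. In particular, no step after the base case constrains $n$ or $m$, which is why the improved range $n \ge m^2 - m + 1$, $m \ge 9$ transfers directly from Theorem 4 --- there is no genuine obstacle beyond Theorem 4 itself.
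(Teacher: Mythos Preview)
Your proof is correct and follows exactly the approach the paper indicates: the paper does not prove Corollary~1 in detail but simply states that the induction on $\ell$ from Zhang, Broersma and Chen remains valid with Theorem~4 replacing Theorem~2 as the base case, which is precisely what you have written out. Your identification of $K_1 + mK_2$ with $F_m$, the lower bound via $\ell K_{n-1}$ (equivalently Burr's bound), and the inductive step splitting on whether some $|S_v| \ge (\ell-1)(n-1)+1$ or $\delta(G) \ge n-1$ are all standard and correct.
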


We also extend Theorem 4 from trees to unicyclic graphs. Let $UC_n$ denote a particular connected graph with $n$ vertices and a single cycle -- or equivalently a connected graph with $n$ vertices and $n$ edges. We prove the following result.

\begin{theorem}
$R(UC_n, F_m) = 2n - 1$ for all $n \ge m^2 - m + 1$ for $m \ge 18$.
\end{theorem}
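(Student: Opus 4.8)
The plan is to prove the two bounds $R(UC_n,F_m)\ge 2n-1$ and $R(UC_n,F_m)\le 2n-1$ separately, with essentially all of the work in the upper bound. For the lower bound I would use the same extremal graph as in the tree case: $K_{n-1,n-1}$ is triangle-free, so it contains no $F_m$, while its complement is a disjoint union of two cliques $K_{n-1}$, which contains no connected graph on $n$ vertices and in particular no $UC_n$. Hence it suffices to show that every graph $G$ on $2n-1$ vertices whose complement $\overline G$ is $F_m$-free contains a copy of $UC_n$.

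The first step is to extract the structure that $F_m$-freeness of $\overline G$ forces on $G$. Since $F_m\subseteq K_{2m+1}$, the graph $\overline G$ has no clique on $2m+1$ vertices, so $\alpha(G)\le 2m$. Also, for each vertex $v$ the non-neighbourhood $\overline N_G(v)$ induces in $\overline G$ a graph with no matching of size $m$ (such a matching together with $v$ would be an $F_m$), hence a graph with a vertex cover of size at most $2m-2$; therefore $\overline N_G(v)$ is the union of a clique of $G$ and a set of at most $2m-2$ further vertices. If that clique ever has at least $n$ vertices then $K_n\subseteq G$ and we are done, since $UC_n$ has $n$ vertices; so I may assume every clique of $G$ has fewer than $n$ vertices, which gives $\delta(G)\ge(2n-1)-1-(n-1)-(2m-2)=n-2m+1$ and, more usefully, that any two vertices of $G$ have at least $n-4m+2$ common neighbours, since their common non-neighbourhood lies inside one clique of $G$ together with at most $4m-4$ further vertices while $G$ has no clique on $n$ vertices. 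If $G$ is disconnected then some component has order at least $n$ and minimum degree at least $n-2m+1$, a degenerate case in which $UC_n$ embeds by a direct argument; so from now on I assume $G$ is connected.

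To locate $UC_n$ I would combine Theorem 4 with a cycle-closing argument. Write $UC_n=T_n\cup\{xy\}$, where $T_n$ is a spanning tree of $UC_n$ and $xy$ is an edge of the unique cycle $C_k$, so that $x$ and $y$ are joined in $T_n$ by a path of length $k-1$. Since $n\ge m^2-m+1$ and $m\ge 18\ge 9$, Theorem 4 gives $R(T_n,F_m)\le 2n-1$, so $G$ already contains some copy $\phi$ of $T_n$; what remains is the claim that $\phi$ can be taken with $\phi(x)\phi(y)\in E(G)$, after which $\phi$ extends to a copy of $UC_n$. To prove this I would re-enter the tree-embedding machinery behind Theorem 4 rather than use it as a black box: embed $T_n$ rooted near $x$ while deliberately keeping a controlled set of slack vertices unused, so that the image of the path from $x$ to $y$ can be chosen flexibly, and then use the large-common-neighbourhood property to reroute that path so it terminates at a $G$-neighbour of $\phi(x)$ and its pendant subtrees land in the slack set. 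The short-cycle cases, where this path is too short to allow rerouting, are handled by seeding the embedding on a triangle or a $C_4$ of $G$ (which exist because $\alpha(G)\le 2m$) and then growing the pendant forest into the remaining vertices one vertex at a time in a leaf-to-root order.

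The main obstacle is the failure analysis of the cycle-closing step, and it is here that the hypotheses $n\ge m^2-m+1$ and the strengthened $m\ge 18$ are consumed. When no admissible rerouting closes the cycle, one must turn the resulting rigid partial embedding into a vertex of $\overline G$ together with $m$ pairwise-disjoint non-edges of $G$ among its non-neighbours, i.e. a copy of $F_m$ in $\overline G$, a contradiction. Carrying this out requires iterating the "clique plus at most $2m-2$ vertices" description of non-neighbourhoods and tracking exactly which vertices are blocked by the partial copy of $T_n$; because the obstruction must now be charged against both endpoints $x$ and $y$ of the cycle edge rather than a single vertex, the defect budget roughly doubles relative to the tree argument, which is what pushes the threshold from $m\ge 9$ up to $m\ge 18$. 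The quadratic bound $n\ge m^2-m+1$ is, as for stars in Theorem 3, precisely the order of magnitude needed to run this extraction, and it cannot be lowered. I expect this extraction argument, together with the bookkeeping that keeps the cycle embedding and the pendant-tree embeddings from colliding, to be by far the hardest part of the proof; the lower bound, the structural consequences of $F_m$-freeness, and the reduction to embedding a spanning tree plus one edge are comparatively routine.
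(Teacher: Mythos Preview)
Your lower bound and the initial structural observation (each non-neighbourhood in your $G$ is a clique plus at most $2m-2$ vertices) are correct. However, the claim that any two vertices of $G$ have at least $n-4m+2$ common neighbours is false: your justification conflates $\overline N_G(u)\cap\overline N_G(v)$ with $\overline N_G(u)\cup\overline N_G(v)$, and bounding the former from above only controls $|N_G(u)\cup N_G(v)|$, not $|N_G(u)\cap N_G(v)|$. In the near-extremal structure that actually arises---two disjoint cliques $U_1,U_2$ of size about $n-2m$ in your $G$ with almost no edges between them---vertices $u\in U_1$ and $v\in U_2$ have very few common $G$-neighbours. Since your rerouting step is built on this common-neighbourhood property, the cycle-closing argument as sketched cannot be carried out, and the vague description of ``keeping slack vertices'' and ``turning a rigid partial embedding into an $F_m$'' does not supply an alternative mechanism.

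The paper's approach is different in kind: rather than embedding $T_n$ and rerouting afterwards, it reruns the entire machinery of Theorem~4 while tracking the endpoints $t_1,t_2$ of the deleted cycle edge from the start, arranging in every constructed embedding that both land in the clique $X_1$ (or are otherwise forced adjacent), so the extra edge is automatic. This requires ingredients absent from your sketch: Shi's theorem $R(C_n,F_m)=2n-1$ to dispose of the pure-cycle case; a Dirac-type argument (Claim~4.1) to produce the first high-degree vertex, since the greedy-failure argument from Theorem~4 does not directly yield one when a cycle must be closed; a more delicate construction of $X_2,Y_2,U_2$ (Claims~4.2--4.3); and, crucially, a refined partition lemma (Claim~4.5) that splits $T_n-x$ into pieces $H,J$ compatibly with the positions of $t_1$ and $t_2$. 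The threshold $m\ge 18$ enters through explicit inequalities such as $m^2-m+1\ge 18m-18$ (Claim~4.4) and $m^2-m+1\ge 16m+12$ (Case~2) in these steps, not through a doubled defect budget in a failure analysis.
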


Note that Theorem 5 implies Theorem 4 as a corollary in the case $m \ge 18$. Despite this, we present our proofs of these two theorems separately because our approach to Theorem 4 motivates our proof of Theorem 5 and because we require a sufficiently different approach and more careful analysis to prove Theorem 4 for $9 \le m < 18$. The next section provides the notation and key lemmas that will be used in the proofs of Theorem 4 and Theorem 5. In the two subsequent sections, we prove Theorem 4 and Theorem 5.

\section{Preliminaries and Lemmas}

We first provide the notation we will adopt on proving Theorem 4 and Theorem 5. Let $G$ be any simple graph. Here, $d_X(v)$ denotes the degree of a vertex $v$ in the set $X \subseteq V(G)$ in $G$ and $\overline{d_X}(v)$ denotes the degree of $v$ in $X$ in the complement graph $\overline{G}$. Similarly, $N_X(v)$ and $\overline{N_{X}}(v)$ denote the sets of neighbors of $v$ in the set $X$ in $G$ and $\overline{G}$, respectively. It is clear that $d_X(v) + \overline{d_X}(v) = |X|$ for any $X \subseteq V(G)$ not containing $v$ and that $d_X(v) = |N_X(v)|$ and $\overline{d_{X}}(v) = |\overline{N_X}(v)|$. We also extend this notation to $N_X(Y)$ and $\overline{N_X}(Y)$ for sets $Y \subseteq V(G)$ disjoint from $X$. When the set $X$ is omitted, it is implicitly $V(G)$ where the graph $G$ is either clear from context or explicitly stated. We denote the maximum and minimum degrees of a graph $G$ as $\Delta(G)$ and $\delta(G)$, respectively. When $G$ is bipartite and connected, we let the sets $A(G)$ and $B(G)$ denote the partite sets of $G$ with $V(G) = A(G) \cup B(G)$ and $|A(G)| \ge |B(G)|$. In particular, this implies that $|A(G)| \ge |V(G)|/2$. For a tree $T$, we let $L(T)$ denote the set of leaves of $T$. Also note that if $T$ is a tree then since $T$ is bipartite, $A(T)$ and $B(T)$ are well-defined.

We now prove two lemmas that will be used throughout the proofs of Theorem 4 and Theorem 5. The first is a structural lemma concerning the vertices of degree two in trees and will be crucial to our methods for embedding trees.

\begin{lemma}
Given a tree $T$ and a subset $F \subseteq V(T)$, there is a set $D$ satisfying:
\begin{enumerate}
\item $D \subseteq A(T)$ and $F \cap D = \emptyset$;
\item each $v \in D$ satisfies $d_T(v) = 2$;
\item each $v \in D$ is not adjacent to any leaves of $T$;
\item no two vertices in $D$ have a common neighbor;
\end{enumerate}
and the size of $D$ is at least
$$|D| \ge \frac{1}{4} \left( |V(T)| - 8 |L(T)| - 2|F| + 12 \right).$$
\end{lemma}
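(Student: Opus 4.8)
The goal is to extract a large set $D$ of degree-2 vertices in $T$, all in the same color class $A(T)$, avoiding $F$, avoiding neighbors of leaves, and pairwise non-adjacent-to-a-common-vertex. My plan is to build $D$ greedily from a suitable pool of candidate vertices and to bound the losses at each restriction step by a counting argument.

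First I would identify the pool. Let $V_2$ denote the set of vertices $v$ with $d_T(v) = 2$. A standard fact about trees is that if $T$ has $\ell$ leaves, then the number of vertices of degree $\ge 3$ is at most $\ell - 1$ (since $\sum_v (d_T(v) - 2) = -2$ forces $\sum_{d_T(v)\ge 3}(d_T(v)-2) = \ell - 2 + |V_2| \cdot 0$, wait — more simply, $\sum_{d\ge 3}(d_T(v) - 2) \le \ell - 2$, so there are at most $\ell - 2$ vertices of degree $\ge 3$). Hence $|V_2| \ge |V(T)| - \ell - (\ell - 2) = |V(T)| - 2\ell + 2$, using $|L(T)| = \ell$. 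That is my starting pool, of size at least $|V(T)| - 2|L(T)| + 2$.

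Next I would peel off the forbidden sets one at a time. Removing $F$ costs at most $|F|$ vertices. Removing vertices of $V_2$ that are adjacent to a leaf: each leaf has exactly one neighbor, so this costs at most $|L(T)|$ vertices. To enforce property (1), note $V_2 \setminus (\text{so far removed})$ splits between $A(T)$ and $B(T)$; I would simply keep whichever side has the larger intersection — but that only gives a factor $1/2$ on a quantity I want to keep clean, so instead I should be more careful and note that I get to *choose* which partite class to call $A(T)$... except the paper fixes $A(T)$ as the larger partite class. So here I would instead argue that after removing $F$ and leaf-neighbors I still have at least half of the remaining pool in $A(T)$, OR absorb this loss into the constant more cleverly — this is where the coefficients $8|L(T)|$ and the $1/4$ come from, so the bookkeeping has to be arranged to land exactly on the claimed bound. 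Finally, for property (4): build $D$ greedily from the surviving candidates in $A(T) \cap V_2$; each time I add a vertex $v$ (with neighbors, say, $x,y$), I delete from the candidate pool all other candidates sharing a neighbor with $v$. Since each candidate has exactly $2$ neighbors and each neighbor of $v$ has some bounded number of candidate-neighbors, I need a bound on how many candidates I kill per selected vertex. The key sub-observation: a vertex $x \in B(T)$ that is the neighbor of a degree-2 vertex — how many degree-2 neighbors can $x$ have? Potentially many (a "spider" center), so the greedy deletion could be expensive; I handle this by also pruning, before the greedy phase, all candidates whose neighbor in $B(T)$ has high degree in the candidate set, charging those to the degree excess $\sum(d_T(x) - 2)$, which is again $O(|L(T)|)$.

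\textbf{Main obstacle.} The delicate part is combining all four restrictions so that the final coefficient is exactly $\tfrac14$ and the additive constant is exactly $+12$ with exactly $-8|L(T)| - 2|F|$; each of the four steps ((i) passing to $V_2$, (ii) deleting $F$, (iii) passing to one partite class, (iv) the greedy/common-neighbor pruning) contributes either a constant-factor loss or a loss proportional to $|L(T)|$ or $|F|$, and the bound asserts that these compound to at worst a factor $4$ overall. I expect the proof to set up a single clean inequality: start from $|V_2| \ge |V(T)| - 2|L(T)| + 2$, observe that the "bad" vertices for properties (1)--(4) number at most (roughly) $|F| + |L(T)| + \tfrac12|V_2| + (\text{greedy loss} \le 2\cdot\text{something})$, and then solve for $|D|$. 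Getting the greedy-deletion loss bounded by a multiple of $(|V(T)| + |L(T)|)$ that, after the arithmetic, leaves the stated bound is the crux; everything else is routine tree edge-counting.
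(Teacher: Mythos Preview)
Your outline is on the right track and in spirit matches the paper's approach---start from degree-$2$ vertices, peel off $F$, leaf-neighbours, and neighbours of high-degree vertices, then handle the ``no common neighbour'' condition---but the final step has a genuine gap. Your greedy selection (``pick $v$, delete all candidates sharing a neighbour with $v$'') only guarantees $|D|\ge |J|/3$, since each selected $v$ has two neighbours and each may kill one further candidate. That yields a leading factor of $\tfrac{1}{6}$, not the claimed $\tfrac{1}{4}$, and the stated bound fails.

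The paper closes this gap with a structural observation rather than greedy deletion. After restricting the candidate set $J\subseteq A(T)$ to degree-$2$ vertices \emph{not adjacent to any vertex of degree $\ge 3$} (cost $\le \sum_{v\in H}d_T(v)\le 3|L(T)|-6$, essentially your ``degree excess'' idea), any common neighbour of two candidates itself has degree exactly $2$, so it is adjacent to \emph{only} those two candidates. Consequently the auxiliary graph $K$ on $J$ (with $uv\in E(K)$ iff $u,v$ share a neighbour in $T$) contains no cycle---a cycle in $K$ would trace out a cycle in $T$. Thus $K$ is a forest, hence bipartite, and taking the larger side gives an independent set $D$ of size $\ge |J|/2$. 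This is what produces the factor $\tfrac{1}{4}$.

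A second, smaller point: the paper handles the intersection with $A(T)$ and the removal of bad vertices in one stroke, using that for any vertex $v$ the set $A(T)$ contains either $v$ (and then none of $N_T(v)$) or not $v$; hence $|A(T)\cap (N_T(S)\cup S)|\le \sum_{v\in S}d_T(v)$ for any $S$. This is cleaner than your sequential-removal bookkeeping and is what makes the constants $-8|L(T)|$ and $+12$ come out exactly.
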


\begin{proof}
Since $|E(T)| = |V(T)| - 1$, it follows that
$$2|L(T)| + 2\left( |V(T)|-|L(T)| \right) -2 = 2|E(T)| = \sum_{v \in V(T)} d_T(v) = |L(T)| + \sum_{v \in I} d_T(v)$$
where $I \subseteq V(T)$ is the set of internal vertices of $T$. Let $H \subseteq V(T)$ be the set of vertices $v \in V(T)$ with $d_T(v) \ge 3$. Rearranging yields that
$$2 + |H| \le 2 + \sum_{v \in H} (d_T(v) - 2) = |L(T)|.$$
Therefore $|H| \le |L(T)| - 2$ and it follows that
$$\sum_{v \in H} d_T(v) = 2|H| + |L(T)| - 2 \le 3 |L(T)| - 6.$$
Therefore $H$ has at most $3|L(T)| - 6$ neighbors in $T$. Now note that for any $v \in V(T)$, either $A(T)$ contains $v$ and none of the neighbors of $v$, or does not contain $v$. Since $d_T(v) \ge 1$ for all $v \in H$ and $v \in L(T)$, this observation implies the following two inequalities
\begin{align*}
|A(T) \cap (N_T(H) \cup H)| &\le \sum_{v \in H} d_T(v) \le 3|L(T)| - 6 \\
|A(T) \cap (N_T(L(T)) \cup L(T))| &\le |L(T)|.
\end{align*}
Let $J \subseteq A(T)$ be the set of $v \in A(T)$ such that $d_T(v) = 2$, $v \not \in F$ and $v$ is not adjacent to any vertex in $H$ or $L(T)$. Since $|A(T)| \ge |V(T)|/2$, it follows that
\begin{align*}
|J| &\ge |A(T)| - |F| - |A(T) \cap (N_T(H) \cup H)| - |A(T) \cap (N_T(L(T)) \cup L(T))|\\
&\ge \frac{1}{2} (|V(T)| - 8 |L(T)| - 2|F| + 12).
\end{align*}

Now consider the graph $K$ on the vertex set $V(K) = J$ such that for distinct $u, v \in J$, the edge $uv \in E(K)$ if and only if $u$ and $v$ have a common neighbor in $T$. Note that $uv \not \in E(T)$ for all $u,v \in J$ since $J \subseteq A(T)$. Since no vertex of $J$ is adjacent to a vertex of $H$, if $uv \in E(K)$ then the common neighbor of $u$ and $v$ in $T$ is not adjacent to any vertices in $J$ other than $u$ and $v$. Therefore there cannot be a cycle in $K$ since otherwise there would be a cycle in $T$, which is not possible since $T$ is a tree. It follows that $K$ is a forest and is bipartite. Let $D$ be the larger part $A(K)$ in a bipartition of $K$. Defined in this way, $D$ satisfies
$$|D| \ge \frac{1}{2} |J| \ge \frac{1}{4} \left( |V(T)| - 8 |L(T)| - 2|F| + 12 \right).$$
Also note that each $v \in D$ satisfies $d_T(v) = 2$, $v \not \in F$ and $v$ is not adjacent to a leaf of $T$ since $D \subseteq J$ and no two vertices in $D$ have a common neighbor in $T$ since $D$ is an independent set in $K$.
\end{proof}

In our proof of Theorem 4, we apply Lemma 1 always with $F = \emptyset$. In our proof of Theorem 5, $F$ consists of two vertices that are adjacent along the cycle in the unicyclic graph $UC_n$. The next lemma asserts that there is always a vertex in any given tree that, when removed, leaves two disconnected sets of similar sizes.

\begin{lemma}
Given a tree $T$ with $|V(T)| \ge 3$, there is a vertex $v \in V(T)$ such that the vertices of the forest $T - v$ can be partitioned into two sets $K$ and $H$ such that there are no edges between $K$ and $H$ and
$$\frac{1}{3}(|V(T)|-1) \le |K|, \, |H| \le \frac{2}{3} (|V(T)|-1).$$
\end{lemma}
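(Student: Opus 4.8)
The plan is a centroid argument. Write $n = |V(T)|$, and for a vertex $v$ let $a_1(v) \ge a_2(v) \ge \cdots \ge a_d(v)$ denote the sizes of the components of the forest $T - v$, so that $\sum_i a_i(v) = n - 1$. The statement splits into two independent pieces: first, locate a vertex $v$ whose largest component is not too big, namely $a_1(v) \le \frac{2}{3}(n-1)$; second, show that from any such $v$ one can greedily bundle whole components into two sets $K$ and $H$ with the required sizes.

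For the first piece I would run the familiar ``walk toward the heavy side'' procedure. Start at an arbitrary vertex; as long as the current vertex $v$ has a component $C$ of $T - v$ with $|C| > \lfloor n/2 \rfloor$, move to the unique neighbour $u$ of $v$ lying in $C$. The crucial observation is that this move strictly decreases the largest component size: in $T - u$, the component containing $v$ is exactly $V(T) \setminus C$, of size $n - |C| < n/2 < |C|$, while every other component of $T - u$ is contained in $C \setminus \{u\}$ and hence has size at most $|C| - 1 < |C|$; thus $a_1(u) < a_1(v)$. Since $a_1$ is a positive integer that strictly decreases, the procedure cannot revisit a vertex and must terminate, necessarily at a vertex $v$ with $a_1(v) \le \lfloor n/2 \rfloor$. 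A quick check shows $\lfloor n/2 \rfloor \le \frac{2}{3}(n-1)$ for every $n \ge 3$ (the only tight-ish values being $n = 3$ and $n = 4$), which gives the first piece.

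For the second piece, fix the vertex $v$ just produced and order its component sizes $a_1 \ge \cdots \ge a_d$. If $a_1 \ge \frac{1}{3}(n-1)$, let $K$ be that single largest component and $H$ the union of the remaining components; then $\frac{1}{3}(n-1) \le |K| = a_1 \le \frac{2}{3}(n-1)$ by the first piece, and $|H| = (n-1) - a_1$ lies in the same range. If instead $a_1 < \frac{1}{3}(n-1)$, then every $a_i < \frac{1}{3}(n-1)$; add components to $K$ one at a time until $|K| \ge \frac{1}{3}(n-1)$ for the first time, which is possible because the $a_i$ sum to $n - 1$. Since the last component added had size less than $\frac{1}{3}(n-1)$, we get $|K| < \frac{2}{3}(n-1)$, and then $|H| = (n-1) - |K|$ again falls in $[\frac{1}{3}(n-1), \frac{2}{3}(n-1)]$. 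In either case $K$ and $H$ are unions of components of $T - v$, so no edges run between them, and the lemma follows.

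The only step with genuine content is the monotonicity claim in the walk: one must verify carefully that after moving into the heavy component every component of the new deleted forest is smaller than the old heavy component, so that the walk is forced to halt. Once that is in hand the remainder is bookkeeping --- the case split above and the elementary inequality $\lfloor n/2 \rfloor \le \frac{2}{3}(n-1)$, for which $n = 3$ is the one value warranting a direct look.
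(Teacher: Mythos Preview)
Your proof is correct and follows essentially the same strategy as the paper's: a walk toward the heavy side to locate a centroid-type vertex, followed by greedily grouping components of $T-v$ into two balanced parts. Your bundling step is in fact a bit cleaner than the paper's, which separates out the cases $a_1(v)=|V(T)|/2$, $d_T(v)=2$, and $d_T(v)\ge 3$ before doing the greedy argument, whereas your single case split on whether $a_1 \ge \tfrac{1}{3}(n-1)$ handles all of these uniformly.
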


\begin{proof}
First note that for any vertex $v \in V(T)$, the forest $T - v$ has $d_T(v)$ connected components. Consider the following procedure. Begin by setting $v$ to be an arbitrary leaf of $T$. If $T - v$ has a connected component $C$ of size at least $|V(T)|/2$, set $v$ to its unique neighbor $u$ in $C$. Note that $T - u$ has a connected component of size $|V(T)| - |C|$ and one or more connected components with the sum of their sizes equal to $|C| - 1$. Therefore either $|C| = |V(T)|/2$ or the size of the largest connected component decreases on setting $v$ to $u$. Thus the procedure either leads to a vertex $v$ such that $T - v$ has a connected component $C$ of size $|C| = |V(T)|/2$ or terminates at a vertex $v$ such that all connected components of $T - v$ have size strictly less than $|V(T)|/2$.

If $v$ is such that $T - v$ has a connected component $C$ of size $|C| = |V(T)|/2$, then let $K = C$ and $H = V(T - v) - C$. It follows that $|H| = |V(T)|/2 - 1$ and $|K| = |C| = |V(T)|/2$, which implies the desired result since in this case $|V(T)|$ must be even and thus $|V(T)| \ge 4$. Note that $d_T(v) = 1$ is not possible by the condition on $v$ and $|V(T)| \ge 3$. If $d_T(v) = 2$, then $T - v$ consists of two connected components $K$ and $H$ such that $|K| + |H| = |V(T)| - 1$. Since $|K|, |H| < |V(T)|/2$, we have that $|K| = |H| = (|V(T)|-1)/2$, in which case the result also holds.

If $d_T(v) = d \ge 3$, let the connected components of $T - v$ be $C_1, C_2, \dots, C_d$. Note that in this case, we must have that $|V(T)| \ge 4$. Assume without loss of generality that $|C_1| \le |C_2| \le \cdots \le |C_d| < |V(T)|/2$. Note that since $d \ge 3$ and $|C_1| + |C_2| + \cdots + |C_d| = |V(T)| - 1$, it follows that $|C_1| \le (|V(T)| - 1)/3$. Now let $t$ be the largest positive integer such that $|C_1| + |C_2| + \cdots + |C_t| \le (|V(T)|-1)/3$. Note that if $t = d - 1$, then $|C_d| \ge 2(|V(T)|-1)/3$ which is not possible since $|C_d| < |V(T)|/2$. Therefore $t \le d - 2$. Note that $|C_1| + |C_2| + \cdots + |C_{t+1}| > (|V(T)|-1)/3$ by definition. If $|C_1| + |C_2| + \cdots + |C_{t+1}| \le 2(|V(T)|-1)/3$, then letting
$$K = C_1 \cup C_2 \cup \cdots \cup C_{t+1} \quad \text{and} \quad H = C_{t+2} \cup C_{t+3} \cup \cdots \cup C_d$$
yields valid sets $K$ and $H$. If $|C_1| + |C_2| + \cdots + |C_{t+1}| > 2(|V(T)|-1)/3$, then it follows that $(|V(T)|-1)/3 < |C_{t+1}| < |V(T)|/2 \le 2(|V(T)|-1)/3$ and letting
$$K = C_{t+1} \quad \text{and} \quad H = C_1 \cup \cdots \cup C_t \cup C_{t+2} \cup \cdots \cup C_d$$
yields the desired sets $K$ and $H$, completing the proof of the lemma.
\end{proof}

The last two lemmas are stated without proof. Lemma 3 is a folklore lemma and Lemma 4 was proven by Zhang, Broersma and Chen and is Lemma 3 in \cite{zhang2015ramsey}.

\begin{lemma}[Folklore]
Let $T$ be a tree with $w_1 \in V(T)$ and let $H$ be a graph with $\delta(H) \ge |V(T)| - 1$ and $w_2 \in V(H)$. Then $T$ can be embedded in $H$ such that $w_1$ is mapped to $w_2$.
\end{lemma}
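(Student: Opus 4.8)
The plan is to embed $T$ into $H$ greedily, vertex by vertex, after rooting $T$ at $w_1$. Write $n = |V(T)|$ and fix an ordering $w_1 = u_1, u_2, \dots, u_n$ of $V(T)$ in which every $u_i$ with $i \ge 2$ has its parent in the rooted tree among $u_1, \dots, u_{i-1}$; a breadth-first or depth-first search from $w_1$ produces such an ordering. I would construct the embedding $\phi \colon V(T) \to V(H)$ by first setting $\phi(u_1) = w_2$, and then, for $i = 2, \dots, n$ in turn, choosing $\phi(u_i)$ to be any neighbour in $H$ of $\phi(p(u_i))$ that has not already been used, where $p(u_i)$ denotes the parent of $u_i$.

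The only thing to verify is that such a choice always exists. When we come to place $u_i$, exactly $i - 1$ vertices of $H$ have been used, namely $\phi(u_1), \dots, \phi(u_{i-1})$, and since the parent $p(u_i) = u_k$ has index $k \le i - 1$, the vertex $\phi(u_k)$ is already defined. As $\delta(H) \ge n - 1$, the vertex $\phi(u_k)$ has at least $n - 1$ neighbours in $H$. Among the $i - 1$ already-used vertices, one of them is $\phi(u_k)$ itself, which is not one of its own neighbours, so at most $i - 2$ of the used vertices are neighbours of $\phi(u_k)$; since $i - 1 \le n - 1$, this leaves at least $(n - 1) - (i - 2) = n - i + 1 \ge 1$ neighbours of $\phi(u_k)$ still available, and we take $\phi(u_i)$ to be one of them. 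By construction $\phi$ is injective and maps each edge $u_i\,p(u_i)$ of $T$ to an edge of $H$, so it realizes $T$ as a subgraph of $H$, and $\phi(w_1) = \phi(u_1) = w_2$ as required.

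There is no substantive obstacle here: this is a routine greedy/counting induction, and the only point needing a moment of care is the bookkeeping showing that the set of forbidden (already-used) vertices never exhausts the degree bound on $\phi(p(u_i))$. That bookkeeping is precisely where the hypothesis $\delta(H) \ge |V(T)| - 1$ enters, and it is sharp, as a disjoint union of cliques each of size $|V(T)| - 1$ shows the bound cannot be lowered.
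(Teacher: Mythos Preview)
Your proof is correct; the greedy embedding with the counting argument is exactly the standard proof of this folklore lemma. The paper itself states Lemma~3 without proof, so there is nothing to compare against, but your write-up is precisely the argument one expects.
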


\begin{lemma}[Zhang, Broersma, Chen \cite{zhang2015ramsey}]
$R(T_n, mK_2) = n + m - 1$ for $n \ge 4m - 4$ where $mK_2$ denotes a matching on $m$ edges.
\end{lemma}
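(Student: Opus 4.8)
The plan is to establish the two matching inequalities $R(T_n, mK_2) \le n+m-1$ and $R(T_n, mK_2) \ge n+m-1$ separately. The lower bound is immediate: the graph $K_{n-1} \cup K_{m-1}$ on $n+m-2$ vertices has no connected subgraph on $n$ vertices, hence does not contain $T_n$, while its complement $K_{n-1,m-1}$ has maximum matching of size $\min\{n-1,m-1\} = m-1 < m$ (as $n \ge m$), so it does not contain $mK_2$. Thus $R(T_n,mK_2) > n+m-2$.

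For the upper bound, let $G$ be a graph on $N := n+m-1$ vertices with no $mK_2$ in $\overline{G}$; I must embed an arbitrary tree $T_n$ into $G$. First I fix a \emph{maximum} matching $M = \{x_1y_1,\dots,x_ty_t\}$ in $\overline{G}$, so $t \le m-1$, and set $W := V(G)\setminus V(M)$. Maximality of $M$ forces $W$ to be independent in $\overline{G}$, i.e.\ a clique in $G$, with $|W| = N - 2t$. The crux is to mine $V(M)$ for vertices behaving like clique vertices relative to $W$: for a matching edge $x_iy_i$, if $x_i$ had a $\overline{G}$-neighbour $w \in W$ and $y_i$ a $\overline{G}$-neighbour $w' \in W$ with $w \ne w'$, then $w\,x_i\,y_i\,w'$ would be an augmenting path, contradicting maximality; so for each $i$ at least one of $x_i,y_i$ is $G$-adjacent to all of $W$, and in the only remaining case both are $G$-adjacent to all of $W$ except a single common vertex $w_i\in W$. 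Removing these at most $t$ exceptional vertices $w_i$ from $W$ to form a clique $W^*$, and putting one endpoint (resp.\ both endpoints) of each matching edge into a set $Q^*$ accordingly, one obtains $V^* := W^* \cup Q^*$ in which every vertex of $Q^*$ is $G$-adjacent to all of $W^*$, together with the two size estimates $|V^*| \ge N - t \ge n$ and $|W^*| \ge N - 3t \ge n - 2m + 2$, which follow from routine counting over the matching edges.

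It remains to embed $T_n$ into $G[V^*]$, which is a clique $W^*$ with extra vertices $Q^*$ each completely joined to $W^*$. Put $c := |W^*|$. If $c \ge n$ then $G \supseteq K_n \supseteq T_n$ and we are done (this also covers $t = 0$). Otherwise $n \ge 4m-4$ yields $n - c \le 2m-2 \le \lceil n/2 \rceil \le \alpha(T_n)$, the last inequality because the larger side of the bipartition of $T_n$ is an independent set of size at least $\lceil n/2\rceil$; so there is an independent set $J$ of $T_n$ with $|J| = n-c$. Mapping $V(T_n)\setminus J$ bijectively onto $W^*$ and $J$ injectively into $Q^*$ (legitimate since $|Q^*| = |V^*| - c \ge n - c = |J|$) gives an embedding of $T_n$ into $G$: every edge of $T_n$ has an endpoint outside $J$, which lands in the clique $W^*$, and every vertex of $W^*$ is $G$-adjacent to all of $W^*\cup Q^*$.

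The one genuinely delicate point is the middle step: extracting from a maximum matching of $\overline{G}$ a ``near-clique'' $V^*$ of $G$ that is at once large ($\ge n$ vertices) and has a large clique core $W^*$ ($\ge n-2m+2$ vertices, which is exactly where the hypothesis $n \ge 4m-4$ is consumed). Balancing these two size bounds — choosing precisely which matching endpoints and which vertices $w_i$ to keep — is the real content; the lower bound, the reduction to embedding a tree into $G[V^*]$, and the embedding itself are then routine.
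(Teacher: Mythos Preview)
The paper does not supply its own proof of this lemma; it is quoted from Zhang--Broersma--Chen \cite{zhang2015ramsey} and stated without argument. Your proof sketch is correct and self-contained: the lower-bound construction $K_{n-1}\cup K_{m-1}$ is the standard one, and the upper-bound argument via a maximum matching $M$ in $\overline{G}$, the augmenting-path dichotomy on each matching edge, and the resulting decomposition $V^*=W^*\cup Q^*$ with $|V^*|\ge n$ and $|W^*|\ge n-2m+2$ all go through as you describe. The final embedding step---placing an independent set of $T_n$ of size $n-|W^*|\le 2m-2\le \lceil n/2\rceil$ into $Q^*$ and the rest into the clique $W^*$---is exactly where $n\ge 4m-4$ enters, as you note.

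It is worth observing that your matching analysis is precisely the mechanism the paper itself exploits in Section~3 when it builds the sets $U_v$, $X_v$, $Y_v$ inside a neighbourhood $N(v)$: your $W$ corresponds to their $U_v$, your case-3 exceptional vertices $w_i$ to the vertices of $U_v$ adjacent to some $x_i,y_i$, and your $W^*$, $Q^*$ to their $X_v$ and (a subset of) $Y_v$. So while the paper does not prove Lemma~4 directly, your argument is essentially an instance of the same structural tool the paper relies on throughout.
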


\section{Proof of Theorem 4}

Let $T_n$ denote a particular tree with $n$ vertices. Assume for the sake of contradiction that there is a graph $G$ with $2n - 1$ vertices that does not contain $F_m$ as a subgraph and such that its complement $\overline{G}$ does not contain $T_n$ as a subgraph. Our proof of Theorem 4 begins with a setup similar to that used by Zhang, Broersma and Chen in  \cite{zhang2015ramsey} to prove Theorem 2. In particular, Zhang, Broersma and Chen introduced the sets $X_v, Y_v$ and $U_v$, used a weaker statement of Claim 3.1 and proved a stronger variant of Claim 3.4 that is not necessary for our approach. The remaining claims and final steps in our proof of Theorem 4 consist of several new methods to embed $T_n$ in $\overline{G}$ that allow for a strengthened analysis and yield the desired quadratic lower bound on $n$ in terms of $m$.

Our proof of Theorem 4 uses the following key ideas. First we prove Theorem 4 for all trees $T_n$ containing some vertex adjacent to a large number of leaves. In the general case, we show that $G$ must have enough structure in common with the extremal graph $K_{n-1, n-1}$ to yield a contradiction as follows. The lack of a fan $F_m$ in $G$ implies that the neighborhood of any vertex cannot contain an $m$-matching, which imposes a strong restriction on the structure of neighborhoods in $G$ and guarantees a large independent subset in any neighborhood. Attempting to greedily embed $T_n$ in $\overline{G}$ twice yields that there are two large disjoint independent sets $U_1$ and $U_2$, which we prove together induce a large nearly-complete bipartite subgraph of $G$. Additional methods of embedding $T_n$ in $\overline{G}$ guarantee that most of the edges are present between $U_1$ and $U_2$. Using the fact that $G$ does not contain $F_m$, we have that each vertex in $W = V(G) - (U_1 \cup U_2)$ is adjacent to a small number of vertices in at least one of $U_1$ or $U_2$. From here we divide into the two cases in which $T_n$ has many or few leaves, both of which lead to a contradiction. The case with many leaves is more easily handled and the case with few leaves is handled by examining the vertices of degree two in $T_n$ using Lemma 1. We now begin our proof of Theorem 4.

Consider an arbitrary vertex $v \in V(G)$. Let $M_v \subseteq E(G)$ denote a maximum matching in $N(v)$ and let $t = |M_v|$. If $t \ge m$, then $G$ contains an $F_m$ on the vertices $V(M_v) \cup \{ v \}$ and thus $t \le m - 1$. Let $U_v = N(v) - V(M_v)$. Note that no two vertices in $U_v$ can be adjacent since this would allow $M_v$ to be extended, contradicting its maximality. Now label the vertices in $V(M_v)$ as $x_1, x_2, \dots, x_t, y_1, y_2, \dots, y_t$ where $M_v = \{x_1 y_1, x_2 y_2, \dots, x_t y_t \}$ and $d_{U_v}(x_i) \le d_{U_v}(y_i)$ for all $1 \le i \le t$. If $d_{U_v}(y_i) \ge 2$ and $d_{U_v}(x_i) \ge 1$, then there are two distinct vertices $z, w \in U_v$ such that $x_i z$ and $y_i w$ are edges and $M_v \cup \{x_i z, y_i w \} - \{x_i y_i \}$ is a matching larger than $M_v$, which is a contradiction. Similarly, if $d_{U_v}(y_i) = d_{U_v}(x_i) = 1$, then $x_i$ and $y_i$ must be adjacent to the same vertex in $U_v$ since otherwise $M_v$ can again be extended. In summary, either $d_{U_v}(x_i) = d_{U_v}(y_i) = 1$ and $x_i$ and $y_i$ are adjacent to the same vertex in $U_v$ or $d_{U_v}(x_i) = 0$.

Now assume without loss of generality that $d_{U_v}(y_i) \le 1$ for $1 \le i \le k$ and $d_{U_v}(y_i) \ge 2$ for $k + 1 \le i \le t$ where $0 \le k \le t$. Note that there are at most $k$ vertices in $U_v$ adjacent to vertices in $V(M_v) - \{y_{k+1}, \dots, y_t \}$. Let $Y_v$ be the union of $V(M_v) - \{y_{k+1}, \dots, y_t \}$ and its set of neighbors in $U_v$ and let $X_v = U_v - (U_v \cap Y_v)$. Now note that as defined above, we have
\begin{align}
|U_v| &= d(v) - 2t \ge d(v) - 2m + 2, \\
|X_v| &\ge d(v) - 2t - k \ge d(v) - 3m + 3, \\
|Y_v| &\le t + 2k \le 3m - 3, \\
|X_v| + |Y_v| &= d(v) - t + k \ge d(v) - m + 1.
\end{align}
Note that if $X_v, Y_v$ and $U_v$ are replaced with $X_v \cap S$, $Y_v \cap S$ and $U_v \cap S$ for any set $S \subseteq V(G)$ and $d(v)$ is replaced with $d_S(v)$, then the lower bounds in terms of $m$ shown above still hold by the same argument.

Let $T$ be the largest subgraph of $\overline{G}$ that is an embedding of a subtree of $T_n$ into $\overline{G}$. Since $\overline{G}$ does not contain $T_n$, it follows that $|V(T)| \le n - 1$ and that there is some $v \in V(T)$ such that $v$ has no neighbors permitting $T$ to be extended in $\overline{G}$. This requires that $v$ is adjacent to all vertices in $V(G) - V(T)$. Since $|V(G) - V(T)| \ge 2n - 1 - (n - 1) = n$, it follows that $d(v) \ge n$. This implies by the inequalities above that $|X_v \cup Y_v| \ge d(v) - m + 1 \ge n - m + 1$, $|U_v| \ge n - 2m + 2$ and $|X_v| \ge n - 3m + 3$ by inequalities (3.1)--(3.4) above.

If $|X_v| \ge n - m + 1$, let $X_1$ be a subset of $X_v$ of size $n - m + 1$, let $Y_1 = \emptyset$ and let $U_1 = X_1 \subseteq X_v \subseteq U_v$. If $|X_v| < n - m + 1$ and $|U_v| \ge n - m + 1$, let $X_1 = X_v$, let $U_1$ be a subset of $U_v$ containing $X_1$ such that $|U_1| = n -m + 1$, and let $Y_1 = U_1 - X_1$. If $|U_v| < n - m +1$, let $X_1 = X_v$, let $U_1 = U_v$ and let $Y_1$ be a subset of $Y_v$ containing $U_v \cap Y_v$ such that $|X_1 \cup Y_1| = n - m + 1$. This ensures that $|X_1 \cup Y_1| = n - m + 1$, $|X_1| \ge n - 3m + 3$, $|U_1| \ge n - 2m + 2$, $|Y_1| \le 3m - 3$ and $X_1 \subseteq X_v$, $Y_1 \subseteq Y_v$, $U_1 \subseteq U_v$, $X_1 \subseteq U_1$, $U_1 \subseteq X_1 \cup Y_1$ and $X_1 \cap Y_1 = \emptyset$. Note that since $U_1$ is an independent set, any embedding of a sub-forest of $T_n$ of size at most $|U_1|$ to the subgraph of $\overline{G}$ induced by $U_1$ succeeds. Also note that each $v \in X_1$ is not adjacent to any vertex in $X_1 \cup Y_1$ other than itself.

The remainder of the proof of the theorem is divided into several claims. The first claim proves Theorem 4 for a class of trees. The proof of this claim is adapted from the beginning of the proof of Theorem 5 in \cite{zhang2015ramsey}.

\begin{claim}
Each vertex $v \in V(T_n)$ is adjacent to at most $m - 2$ leaves of $T_n$ and $\Delta(T_n) < 11n/20$.
\end{claim}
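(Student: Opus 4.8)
The plan is to prove each part by contraposition: I will show that if some vertex of $T_n$ is adjacent to at least $m-1$ leaves, or if $\Delta(T_n) \ge 11n/20$, then $T_n$ can be embedded in $\overline{G}$, contradicting our standing assumption. Before either case I may reduce to the situation where $\overline{G}$ has no clique of size $n$: otherwise $T_n \subseteq K_n \subseteq \overline{G}$ by Lemma 3. Under this reduction, inequality (3.1) applied to $U_u$ for an arbitrary vertex $u$ gives $d(u) - 2m + 2 \le |U_u| \le \omega(\overline{G}) \le n - 1$, so $d_G(u) \le n + 2m - 3$ for every $u$, and consequently $\overline{d}(u) \ge n - 2m + 1$ for every $u$.

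For the statement about leaves, suppose $w \in V(T_n)$ has leaf-neighbors $\ell_1, \dots, \ell_s$ with $s \ge m-1$, and let $T' = T_n - \{\ell_1,\dots,\ell_s\}$, a tree on $n - s$ vertices. The idea is to embed $T'$ into $\overline{G}$ with $w$ sent to a vertex $z$ that still has at least $s$ unused $\overline{G}$-neighbors, and then to place $\ell_1,\dots,\ell_s$ on $s$ of them. To see $T'$ embeds at all, run the greedy extension procedure from the setup above: if $T'$ fails to embed, the maximal embedded subtree leaves a vertex $v'$ adjacent in $G$ to all $\ge (2n-1) - (n - s - 1) = n + s$ vertices outside it, so by (3.1) the $G$-independent set $U_{v'}$, a clique of $\overline{G}$, has size at least $n + s - 2m + 2 \ge n - s = |V(T')|$ using $s \ge m-1$; hence $T'$ embeds into $\overline{G}[U_{v'}]$ after all, with $w$ mapped to any chosen vertex of $U_{v'}$. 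What remains is to choose the target $z$ (and, when $z$ is forced into a clique all of whose vertices are used by $T'$, to rearrange the embedding so that few vertices of $T'$ are $\overline{G}$-adjacent to $z$) so that $z$ retains at least $s$ free neighbors; this is the delicate bookkeeping step, and it is where the bounds (3.1)--(3.4), in particular $|Y_v| \le 3m-3$ and $|X_v| \ge d(v) - 3m + 3$, together with the hypothesis $n \ge m^2 - m + 1$, are used, following the corresponding argument in the proof of Theorem 3.

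For the bound on $\Delta(T_n)$, suppose $\Delta(T_n) \ge 11n/20$ and let $w$ attain it. Each of the $\deg_{T_n}(w)$ branches of $T_n - w$ then has at most $n - \Delta(T_n) \le 9n/20$ vertices, so distributing the branches greedily into two bins (always adding the next branch to the currently smaller bin) partitions $T_n - w$ into two sets whose sizes lie within $9n/40$ of $(n-1)/2$; writing $T_n = T_A \cup \{w\} \cup T_B$ for the two subtrees meeting only at $w$, one obtains $|V(T_A)|, |V(T_B)| < 3n/4 < n - m + 1$, the last inequality because $n \ge m^2 - m + 1 > 4m - 4$ for $m \ge 9$. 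I then embed $T_A$ and $T_B$ into $\overline{G}$ overlapping only in the image of $w$: the surplus of roughly $9n/20$ vertices of $V(G)$ beyond the two halves, together with the large $\overline{G}$-cliques $U_{(\cdot)}$ furnished by the $F_m$-freeness of $G$ and the high-$G$-degree vertex $v$ of the setup, provides enough room to host both halves and enough free $\overline{G}$-neighbors at the image of $w$ to reconnect them; the constant $11/20$ is calibrated precisely so that the two half-sizes plus the slack needed around $w$ fit inside the $2n-1$ vertices available.

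The main obstacle in both parts is identical: after the bulk of $T_n$ is placed, one must certify that the image of the high-degree vertex $w$ still has at least $s$ free $\overline{G}$-neighbors (respectively, at least as many as the number of $T_A$-to-$T_B$ reconnection edges), and the naive clique-based embeddings lose roughly $m$ vertices of slack relative to what one needs, since $\overline{G}$ has no clique of size $n$; closing this gap is exactly where the quadratic threshold $n \ge m^2 - m + 1$, the inequalities (3.1)--(3.4), and the near-completeness of $\overline{G}$ between $G$-independent sets must be combined, rather than any soft counting. A secondary subtlety constraining the embeddings is that a vertex of the clique-like set $X_v$ may have no $\overline{G}$-neighbor outside $X_v \cup Y_v$, so every vertex of $T_n$ that still has children to embed must be kept inside a region of $\overline{G}$ whose neighborhoods we control.
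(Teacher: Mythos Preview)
Your proposal has genuine gaps in both parts, and in both cases the paper's argument is shorter and uses tools you did not invoke.

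\textbf{Leaves.} Your reduction via $\omega(\overline{G})\le n-1$ and (3.1) gives only $\Delta(G)\le n+2m-3$, hence $\delta(\overline{G})\ge n-2m+1$. That is not enough to embed $T' = T_n - L$ on $n-m+1$ vertices by Lemma~3, and you acknowledge this by deferring to ``delicate bookkeeping'' and to ``the corresponding argument in the proof of Theorem~3'' without actually carrying it out. The paper instead uses Lemma~4 ($R(T_n,mK_2)=n+m-1$) to get the sharp bound $\Delta(G)\le n+m-2$, so $\delta(\overline{G})\ge n-m = |V(T_n-L)|-1$. Then Theorem~3 (applied as a black box, not its proof) furnishes an $S_n$ in $\overline{G}$ with center $w$, and Lemma~3 embeds $T_n-L$ with $v\mapsto w$; since $\overline{d}(w)\ge n-1$ and only $n-m$ other vertices are used, at least $m-1$ free $\overline{G}$-neighbors of $w$ remain for $L$. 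No bookkeeping with $X_v,Y_v$ or (3.1)--(3.4) is needed at all.

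\textbf{Maximum degree.} You attempt a direct two-piece embedding of $T_n$ around $w$, but the crucial step---reconnecting the two halves at the image of $w$---is only asserted (``provides enough room\ldots the constant $11/20$ is calibrated precisely''), not proved; you would need to control simultaneously where $w$ lands and how many of its $\overline{G}$-neighbors are consumed by each half, and nothing in your sketch does that. The paper avoids this entirely with a one-line counting reduction: if $d_{T_n}(v)\ge 11n/20$, writing $K$ for the leaf neighbors and $H$ for the non-leaf neighbors of $v$ gives $n\ge 1+|K|+2|H|$, hence $|K|\ge 2d_{T_n}(v)-n+1\ge n/10+1\ge m-1$ (using $n\ge m^2-m+1$ and $m\ge 9$), and the first part applies. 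This is where the constant $11/20$ actually comes from.
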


\begin{proof}
We begin by showing that $\Delta(G) \le n + m - 2$. Assume for contradiction that $d(u) \ge n + m - 1$ for some $u \in V(G)$. Since $m \ge 4$, it follows that $n \ge m^2 - m + 1 > 4m - 4$ and by Lemma 4, either the subgraph of $\overline{G}$ induced by $N(u)$ contains $T_n$ or the subgraph of $G$ induced by $N(u)$ contains $mK_2$, which along with $u$ yields $F_m$ as a subgraph of $G$. This is a contradiction and thus $\Delta(G) \le n + m - 2$.

Now we show that each vertex of $T_n$ is adjacent to at most $m - 2$ leaves. Assume for the sake of contradiction that some $v \in V(T_n)$ is adjacent to at least $m - 1$ leaves. Let $L$ be a set of $|L| = m - 1$ leaves adjacent to $v$. Note that $T_n - L$ is a tree satisfying that
$$\delta(\overline{G}) \ge (2n - 1) - 1 - \Delta(G) \ge n - m = |V(T_n - L)| - 1.$$
By Theorem 3, it follows that $\overline{G}$ contains $S_n$ as a subgraph. Let $w$ be the center of this $S_n$ subgraph. By Lemma 3, $T_n - L$ can be embedded in $\overline{G}$ such that $v$ is mapped to $w$. Since $w$ is the center of an $S_n$ subgraph of $\overline{G}$, it follows that $\overline{d}(w) \ge n - 1$ and therefore including $|L| = m - 1$ neighbors of $w$ disjoint from the embedding of $T_n - L$ yields a successful embedding of $T_n$ into $\overline{G}$. This is a contradiction and thus each vertex of $T_n$ is adjacent to at most $m - 2$ leaves.

We now show that $\Delta(T_n) < 11n/20$. Assume for the sake of contradiction that $\Delta(T_n) \ge 11n/20$ and let $v$ be a vertex of largest degree in $T_n$. Let $K \subseteq L(T_n)$ be the set of leaves adjacent to $v$ and let $H \subseteq V(T_n)$ be the set of vertices of degree at least two adjacent to $v$. It follows that $d_{T_n}(v) = |K| + |H|$. Because $T_n$ contains $v$, each leaf in $K$, each vertex in $H$ and at least $|H|$ distinct additional vertices each adjacent to a vertex of $H$, it follows that $n = |V(T_n)| \ge 1 + |K| + 2|H|$. Therefore
$$|K| + n \ge 1 + 2|K| + 2|H| = 1 + 2 d_{T_n}(v) \ge 1 + 11n/10$$
which implies that $|K| \ge 1 + n/10 > m - 1$ since $n \ge m^2 - m + 1 > 10m - 20$ for $m \ge 9$. Applying the previous argument now yields a contradiction.
\end{proof}

The next claim guarantees embeddings of sub-forests of $T_n$ to $X_1 \cup Y_1$, allowing some flexibility over where a set of vertices is mapped as long as not too many vertices in the set are in $A(T_n)$.

\begin{claim}
Let $H \subseteq V(T_n)$ be such that $|V(T_n - H)| \le |X_1| + |Y_1| = n - m + 1$ or, equivalently, such that $|H| \ge m - 1$. Let $w_1, w_2, \dots, w_k \in V(T_n - H)$ be distinct vertices of $T_n - H$ and let $u_1, u_2, \dots, u_k \in X_1$ be distinct vertices of $X_1$. If
$$|A(T_n) \cap \{ w_1, w_2, \dots, w_k \}| + |A(T_n) \cap H| \le m + 1$$
then $T_n - H$ can be embedded in $\overline{G}$ such that the vertices of $T_n - H$ are mapped to a subset of $X_1 \cup Y_1$ and $w_i$ is mapped to $u_i$ for all $1 \le i \le k$.
\end{claim}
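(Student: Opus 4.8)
Write the embedding so that it only ever uses the structure already extracted on $X_1 \cup Y_1$: every vertex of $X_1$ is adjacent in $\overline{G}$ to all of the other $n-m$ vertices of $X_1 \cup Y_1$ (this is recorded just before the claims). Consequently, if I produce an injection $\phi$ of $V(T_n - H)$ into $X_1 \cup Y_1$ with the property that every edge of the forest $T_n - H$ has at least one endpoint sent into $X_1$, then $\phi$ is automatically a valid embedding of $T_n - H$ into $\overline{G}$: for an edge $xy$ of $T_n-H$ with, say, $\phi(x) \in X_1$, the universality of $\phi(x)$ in $X_1 \cup Y_1$ forces $\phi(x)\phi(y) \in E(\overline{G})$. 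Under such an embedding the set $R$ of vertices of $T_n - H$ sent into $Y_1$ must be an independent set of the forest, must have $|R| \le |Y_1|$, and must avoid $w_1, \dots, w_k$ (which are pinned into $X_1$). Conversely, given any independent set $R$ of $T_n - H$ with $R \cap \{w_1, \dots, w_k\} = \emptyset$ and
$$|V(T_n - H)| - |X_1| \le |R| \le |Y_1|,$$
I can build $\phi$ by sending $R$ injectively into $Y_1$, sending $w_i \mapsto u_i$ for each $i$, and sending the remaining $|V(T_n-H)| - |R| \le |X_1|$ vertices injectively into $X_1 \setminus \{u_1, \dots, u_k\}$; the three target sets are pairwise disjoint, so $\phi$ is injective and has image in $X_1 \cup Y_1$, and no edge of $T_n-H$ lies inside $R$, so every edge gets an $X_1$-endpoint. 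Thus the whole claim reduces to producing such an $R$.

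To produce $R$, I would split along the bipartition of $T_n$ and take $R_0 = (A(T_n) \setminus H) \setminus \{w_1, \dots, w_k\}$. Since $R_0 \subseteq A(T_n)$ it is independent in $T_n$, hence in the forest $T_n - H$, and it is disjoint from $\{w_1, \dots, w_k\}$ by construction. Because $\{w_1,\dots,w_k\}$ is disjoint from $H$,
$$|R_0| = |A(T_n)| - |A(T_n) \cap H| - |A(T_n) \cap \{w_1, \dots, w_k\}| \ge |A(T_n)| - (m+1),$$
using exactly the hypothesis of the claim. Now $|A(T_n)| \ge n/2$ since $A(T_n)$ is the larger part of the bipartition, so $|R_0| \ge n/2 - m - 1$. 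On the other hand, by the hypothesis $|V(T_n-H)| \le |X_1|+|Y_1| = n - m + 1$ and by $|X_1| \ge n - 3m + 3$ we get $|V(T_n-H)| - |X_1| \le 2m - 2$, and $n/2 - m - 1 \ge 2m - 2$ holds since $n \ge m^2 - m + 1 \ge 6m - 2$ for $m \ge 9$. Hence $|R_0| \ge |V(T_n - H)| - |X_1|$. Combining this with $|V(T_n-H)| - |X_1| \le |Y_1|$, any subset $R \subseteq R_0$ of size $\max\{0,\ |V(T_n-H)| - |X_1|\}$ satisfies the boxed inequalities. (When $|V(T_n-H)| \le |X_1|$ this means $R = \emptyset$ and the whole forest is mapped into $X_1$, which is fine since $X_1$ is a clique in $\overline{G}$.)

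The only genuinely delicate point is the bookkeeping in the construction of $R$: one must notice that the quantity $|A(T_n) \cap \{w_1, \dots, w_k\}| + |A(T_n) \cap H|$ appearing in the hypothesis is precisely the loss incurred when the $A$-side of the bipartition is used to supply the independent set that will be parked in $Y_1$, and that $|A(T_n)| \ge n/2 \gg m$, which uses $n \ge m^2 - m + 1$, is exactly what makes this loss affordable given the small slack $|V(T_n-H)| - |X_1| \le 2m-2$ and the small capacity $|Y_1| \le 3m-3$. Once the problem is reduced to ``cover every edge of $T_n - H$ by an $X_1$-endpoint,'' everything else is routine arithmetic and the disjoint assignment of vertices to $Y_1$, to $\{u_1,\dots,u_k\}$, and to the rest of $X_1$.
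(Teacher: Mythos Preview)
Your proof is correct and follows essentially the same approach as the paper: both arguments use the universality of $X_1$ in $\overline{G}[X_1 \cup Y_1]$ to reduce the embedding to finding an independent set of $T_n - H$ avoiding the $w_i$'s to park in $Y_1$, and both take this set from $A(T_n)$, verifying via the hypothesis and $|A(T_n)| \ge n/2$ that enough vertices survive. The only cosmetic difference is that the paper fills $Y_1$ exactly (taking $|R| = |Y_1|$ in your notation) while you take $R$ of the minimum workable size $\max\{0, |V(T_n - H)| - |X_1|\}$; this makes your handling of the strict-inequality case $|V(T_n - H)| < n - m + 1$ more explicit than the paper's one-line remark.
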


\begin{proof}
Suppose that $|V(T_n - H)| = |X_1| + |Y_1| = n - m + 1$. Note that
\begin{align*}
|A(T_n) \cap V(T_n - H)| &= |A(T_n)| - |A(T_n) \cap H| \\
&\ge n/2 + |A(T_n) \cap \{ w_1, w_2, \dots, w_k \}| - m - 1 \\
&\ge 3m - 3 + |A(T_n) \cap \{ w_1, w_2, \dots, w_k \}| \\
&\ge |Y_1| + |A(T_n) \cap \{ w_1, w_2, \dots, w_k \}|
\end{align*}
since $n \ge m^2 - m + 1 \ge 8m - 4$ for all $m \ge 9$. Now embed the forest $T_n - H$ to $\overline{G}$ as follows. Map $w_i$ to $u_i$ for all $1 \le i \le k$ and map $|Y_1|$ vertices in $A(T_n) \cap V(T_n - H)$, excluding $w_i$ if $w_i \in A(T_n)$ for all $1 \le i \le k$, to $Y_1$ arbitrarily, which is possible by the inequality above. Map the remaining $|X_1| - k$ vertices of $T_n - H$ arbitrarily to distinct vertices in $X_1 - \{ u_1, u_2, \dots, u_k \}$. Since each vertex in $X_1$ is adjacent to all vertices in $X_1 \cup Y_1$ in $\overline{G}$ other than itself and $A(T_n) \cap V(T_n - H)$ is an independent set of $T_n$, this embedding succeeds. If $|V(T_n - H)| < |X_1| + |Y_1|$, then the claim is also implied by this argument.
\end{proof}

The next claim begins the main body of the proof of Theorem 4. It is the first claim towards showing that $G$ contains a large induced bipartite subgraph that is nearly complete.

\begin{claim}
There is a vertex $u \in V(G)$ such that $d_{O_1}(u) \ge n$ where $O_1 = V(G) - (X_1 \cup Y_1)$.
\end{claim}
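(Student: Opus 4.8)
The plan is to argue by contradiction: suppose $d_{O_1}(u) \le n - 1$ for \emph{every} $u \in V(G)$, and derive an embedding of $T_n$ into $\overline{G}$. Since $|O_1| = (2n-1) - (n-m+1) = n+m-2$, this assumption gives $\overline{d_{O_1}}(u) \ge m - 1$ for every $u \in X_1 \cup Y_1$ and $\overline{d_{O_1}}(u) \ge m - 2$ for every $u \in O_1$; in particular $\delta(\overline{G}[O_1]) \ge m - 2$. The idea is to embed all but $m - 1$ vertices of $T_n$ into $X_1 \cup Y_1$ via Claim 3.2, and then hang the remaining $m - 1$ vertices off this partial embedding inside $O_1$, using the $\overline{G}$-edges guaranteed by the two degree bounds above.

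First I choose which $m - 1$ vertices of $T_n$ to set aside. Root $T_n$ at an arbitrary vertex $\rho$, and for $w \in V(T_n)$ let $T_w$ be the subtree hanging below $w$ and $s_w = |V(T_w)|$. A greedy prefix-sum argument (descend from $\rho$; at each vertex select a maximal initial segment of its children whose subtree sizes sum to at most the remaining budget, then recurse into the next child) produces pairwise incomparable non-root vertices $w_1, \dots, w_r$ with $\sum_{i=1}^r s_{w_i} = m - 1$. Put $H = \bigcup_{i=1}^r V(T_{w_i})$. Then $T_n - H$ is a subtree on $n - m + 1 = |X_1| + |Y_1|$ vertices (it contains $\rho$), each component $T_{w_i}$ of $T_n[H]$ is joined to $T_n - H$ by the single edge $w_i p(w_i)$, where $p(w_i)$ is the parent of $w_i$ (which lies in $T_n - H$ by incomparability), and $s_{w_i} \le m - 1$ for all $i$. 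Since $|H| = m - 1$, Claim 3.2 applied with $k = 0$ gives an embedding $\phi$ of $T_n - H$ into $X_1 \cup Y_1$.

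Now I extend $\phi$ to all of $T_n$ by placing each $T_{w_i}$ inside $O_1$, processing $i = 1, 2, \dots, r$ while tracking the set of vertices of $O_1$ already used; at the start of step $i$ this set has size $c_i := \sum_{j < i} s_{w_j} = (m-1) - s_{w_i} - \sum_{j > i} s_{w_j} \le m - 1 - s_{w_i}$. Let $q_i = \phi(p(w_i)) \in X_1 \cup Y_1$. Because $\overline{d_{O_1}}(q_i) \ge m - 1$ while only $c_i \le m - 1 - s_{w_i}$ vertices of $O_1$ are used, at least $s_{w_i} \ge 1$ vertices of $\overline{N_{O_1}}(q_i)$ are still free; pick one, $z_i$. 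Then embed $T_{w_i}$ into the subgraph of $\overline{G}$ induced by the still-free vertices of $O_1$, sending $w_i$ to $z_i$: this is legitimate by Lemma 3, since every free vertex of $O_1$ retains at least $(m-2) - c_i \ge s_{w_i} - 1 = |V(T_{w_i})| - 1$ free $\overline{G}$-neighbors. After step $r$ exactly $m - 1$ vertices of $O_1$ have been used, and the union of $\phi$ with these $r$ embeddings is an embedding of $T_n$ into $\overline{G}$: edges inside $T_n - H$ and inside each $T_{w_i}$ are valid by $\phi$ and Lemma 3, the only edges joining $H$ to $T_n - H$ are the $w_i p(w_i)$, which map to the $\overline{G}$-edges $z_i q_i$, there are no edges between distinct $T_{w_i}$'s, and all images are distinct. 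This contradicts the assumption that $\overline{G}$ contains no $T_n$, proving the claim.

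The crux is the choice of $H$: it must break into components each attached to the rest of $T_n$ by a \emph{single} edge (so each hanging piece need only be joined to $\overline{G}$ at one already-embedded vertex) and each of size at most $m - 1$ (so it fits inside the low-min-degree graph $\overline{G}[O_1]$), while $|H| = m - 1$ exactly, so that the degree accounting in the final step closes with nothing to spare. The incomparable-subtree construction delivers all three requirements simultaneously; note that this argument does not use Claim 3.1.
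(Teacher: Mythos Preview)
Your proof is correct but takes a genuinely different route from the paper. The paper's argument is shorter and more direct: it picks a connected subtree $K$ of $T_n$ on $n-m+1$ vertices (obtained by peeling off $m-1$ leaves), embeds $K$ onto $X_1\cup Y_1$ via Claim~3.2, and then greedily extends the embedding one vertex at a time into $V(G)\setminus V(H)$; since $\overline{G}$ contains no $T_n$, the greedy extension must stall at some vertex $u_2$, which is then adjacent in $G$ to all of $V(G)\setminus V(H)\subseteq O_1$, giving $d_{O_1}(u_2)\ge n$ directly. In particular the paper never needs the bound $\delta(\overline{G}[O_1])\ge m-2$, nor any special decomposition of the deleted $m-1$ vertices. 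Your approach instead converts the negation of the claim into the two degree bounds $\overline{d_{O_1}}(u)\ge m-1$ for $u\in X_1\cup Y_1$ and $\delta(\overline{G}[O_1])\ge m-2$, and then builds a full embedding of $T_n$: the rooted prefix-sum selection of incomparable subtrees is a neat device that simultaneously guarantees $|H|=m-1$ exactly, single-edge attachment of each piece, and $|V(T_{w_i})|\le m-1$, so that the degree budgets close with nothing to spare. This is closer in spirit to the paper's proof of the analogous Claim~4.3 for unicyclic graphs (which also argues by contradiction from $\overline{d_{O_1}}\ge m-1$), but the paper handles that case by a split on $|L(T_n)|$ using leaves and Lemma~1's degree-two vertices, whereas your subtree decomposition avoids any case analysis. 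The tradeoff: the paper's argument for Claim~3.3 is quicker, while yours is more structural and would adapt more readily if one needed tighter control over where the last $m-1$ vertices land.
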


\begin{proof}
First note that $T_n$ has a connected subtree $K$ on $|X_1| + |Y_1| = n - m + 1$ vertices. Such a subtree can be obtained by removing a leaf of $T_n$ and repeatedly removing a leaf of the resulting tree until $m - 1$ vertices have been removed. Now note that $|A(T_n) \cap V(T_n - K)| \le |V(T_n - K)| = m - 1 < m + 1$. Thus by Claim 3.2 applied with $k = 0$, this subtree $K$ can be embedded in $\overline{G}$ such that the vertices $V(K)$ are mapped exactly to $X_1 \cup Y_1$.

Now consider the following procedure to greedily extend this embedding to an embedding of $T_n$ in $\overline{G}$. At any point in this procedure, let $H$ denote the subgraph of $\overline{G}$ that has so far been embedded to. Note that $V(H) = X_1 \cup Y_1$ initially and $X_1 \cup Y_1 \subseteq V(H)$ at any point in this procedure. Greedily extend the embedding as follows: if $u_1 \in V(T_n)$ has been mapped to $u_2 \in V(H) \subseteq V(G)$, $w \in V(T_n)$ has not been embedded to $\overline{G}$ and $w$ is adjacent to $u_1$ in $T_n$, map $w$ to some element of $\overline{N_{V(G)-V(H)}}(u_2)$ if it is non-empty. Note that each possible $w$ has a unique neighbor among the vertices $V(T_n)$ that have been embedded to $H$ since $T_n$ contains no cycles and $H$ is always connected throughout this procedure. Furthermore, if not all of $T_n$ has been embedded to $\overline{G}$ then some such $w \in V(T_n)$ must exist since $T_n$ is connected. Since $\overline{G}$ does not contain $T_n$ as a subgraph, the embedding must fail with $\overline{N_{V(G)-V(H)}}(u_2) = \emptyset$ for some $u_2 \in V(H)$ where $|V(H)| \le n - 1$. Since $V(G) - V(H) \subseteq O_1$, this implies that $d_{O_1}(u_2) \ge d_{V(G) - V(H)}(u_2) \ge |V(G) - V(H)| \ge n$, completing the proof of the claim.
\end{proof}

We now construct sets $X_2, Y_2$ and $U_2$ similarly to $X_1, Y_1$ and $U_1$. Since $d_{O_1}(u) \ge n$, it follows that  $|O_1 \cap (X_u \cup Y_u)| \ge d_{O_1}(u) - m + 1 \ge n - m + 1$ and $|O_1 \cap U_u| \ge n - 2m + 2$ by applying the lower bounds (3.1)--(3.4) restricted to the subset $S = O_1$. If $|O_1 \cap X_u| \ge n - m + 1$, let $X_2$ be a subset of $O_1 \cap X_u$ of size $n - m + 1$, let $Y_2 = \emptyset$ and let $U_2 = X_2 \subseteq O_1 \cap X_u \subseteq O_1 \cap U_u$. If $|O_1 \cap X_u| < n - m + 1$ and $|O_1 \cap U_u| \ge n - m + 1$, let $X_2 = O_1 \cap X_u$, let $U_2$ be a subset of $O_1 \cap U_u$ containing $X_2$ such that $|U_2| = n -m + 1$ and let $Y_2 = U_2 - X_2$. If $|O_1 \cap U_u| < n - m +1$, let $X_2 = O_1 \cap X_u$, let $U_2 = O_1 \cap U_u$ and let $Y_2$ be a subset of $O_1 \cap Y_u$ containing $O_1 \cap U_u \cap Y_u$ such that $|X_2 \cup Y_2| = n - m + 1$. Applying the lower bounds (3.1)--(3.4) restricted to the subset $S = O_1$ yields that $|X_2 \cup Y_2| = n - m + 1$, $|X_2| \ge n - 3m + 3$, $|U_2| \ge n - 2m + 2$ and $|Y_2| \le 3m - 3$. Furthermore, we have that $X_2 \subseteq O_1 \cap X_u$, $Y_2 \subseteq O_1 \cap Y_u$, $U_2 \subseteq O_1 \cap U_u$, $X_2 \subseteq U_2$ and $U_2 \subseteq X_2 \cup Y_2$. Also note that $X_1, X_2, Y_1$ and $Y_2$ are pairwise disjoint and that Claim 3.2 holds when $X_1$ and $Y_1$ are replaced with $X_2$ and $Y_2$, respectively.

We now proceed to the fourth claim in our proof of Theorem 4, which shows that each vertex in $U_1$ is adjacent to almost all vertices in $U_2$, and thus that $U_1$ and $U_2$ induce a nearly complete bipartite subgraph of $G$.

\begin{claim}
For each $w \in U_1$, we have $\overline{d_{U_2}}(w) \le 2m - 3$, and for each $w \in U_2$, we have $\overline{d_{U_1}}(w) \le 2m - 3$.
\end{claim}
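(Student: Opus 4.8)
The plan is to argue by contradiction. Suppose some $w \in U_1$ satisfies $\overline{d_{U_2}}(w) \ge 2m - 2$, and set $Z = \overline{N_{U_2}}(w)$, so that $|Z| \ge 2m - 2$; I will build an embedding of $T_n$ in $\overline{G}$, contradicting the choice of $G$. The structure I would exploit is that $U_1$ and $U_2$ are independent sets of $G$ — hence cliques of $\overline{G}$ — that in $\overline{G}$ the vertex $w$ is adjacent to every vertex of $U_1 \setminus \{w\}$ and to every vertex of $Z$, and that $n - 2m + 2 \le |U_1|, |U_2| \le n - m + 1$.

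First I would apply Lemma 2 to $T_n$ to obtain a vertex $p$ for which $V(T_n - p)$ splits into two forests of size at most $\tfrac{2}{3}(n - 1)$ each with no edges between them; in particular each connected component $C_1, \dots, C_d$ of $T_n - p$ lies in one of these forests and so has $|C_i| \le \tfrac{2}{3}(n-1) \le |U_2|$, where the last inequality uses $n \ge m^2 - m + 1$ and $m \ge 9$ (note this also forces $d \ge 2$, so $p$ is not a leaf). The embedding I have in mind maps $p$ to $w$, routes a subfamily $\mathcal{A}$ of the components into $U_1 \setminus \{w\}$, and routes the complementary subfamily $\mathcal{B}$ into $U_2$, sending the unique neighbour of $p$ in each $C \in \mathcal{B}$ to its own vertex of $Z$. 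Since $U_1 \setminus \{w\}$ and $U_2$ each induce a clique of $\overline{G}$, so that any forest can be packed into either one up to its size, and $w$ is joined in $\overline{G}$ to all of $U_1 \setminus \{w\}$ and to all of $Z$, this embedding succeeds as soon as
\[
\sum_{C \in \mathcal{A}} |C| \le |U_1| - 1, \qquad \sum_{C \in \mathcal{B}} |C| \le |U_2|, \qquad |\mathcal{B}| \le |Z|.
\]
Because $\sum_i |C_i| = n - 1$ and $(|U_1| - 1) + |U_2| \ge 2n - 4m + 3 \ge n - 1$, the first two demands are compatible; the real point is to meet them while keeping $|\mathcal{B}|$ below $|Z|$.

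To produce the partition I would order the components so that $|C_1| \ge \dots \ge |C_d|$ and let $t$ be the least index with $|C_1| + \dots + |C_t| \ge n - |U_1|$, putting $\mathcal{B} = \{C_1, \dots, C_t\}$ and $\mathcal{A}$ the rest; note $n - |U_1| \le 2m - 2$. Then $\sum_{C \in \mathcal{A}}|C| = (n-1) - \sum_{C \in \mathcal{B}}|C| \le |U_1| - 1$ holds automatically. If $t = 1$, then $\sum_{C \in \mathcal{B}}|C| = |C_1| \le \tfrac{2}{3}(n-1) \le |U_2|$ and $|\mathcal{B}| = 1 \le |Z|$. If $t \ge 2$, then minimality gives $|C_1| < n - |U_1|$, so $\sum_{C\in\mathcal{B}}|C| < 2(n - |U_1|) \le 4m - 4 \le |U_2|$; and $|\mathcal{B}| = t$ is controlled by distinguishing on the size of the smallest chosen component $C_t$: if $|C_t| \ge 2$ then $2(t-1) \le |C_1| + \dots + |C_{t-1}| < n - |U_1| \le 2m - 2$ gives $t \le m - 1$, while if $|C_t| = 1$ then $t \le |C_1| + \dots + |C_t| \le (n - |U_1| - 1) + 1 \le 2m - 2$; either way $|\mathcal{B}| \le 2m - 2 \le |Z|$. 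The resulting embedding of $T_n$ in $\overline{G}$ is the desired contradiction, so $\overline{d_{U_2}}(w) \le 2m - 3$ for all $w \in U_1$; the bound for $w \in U_2$ follows from the identical argument with the roles of $U_1$ and $U_2$ interchanged, which is legitimate since the inequalities $n - 2m + 2 \le |U_i| \le n - m + 1$ hold for $i = 1, 2$.

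The step I expect to be the main obstacle is exactly the control of $|\mathcal{B}|$, the number of components pushed into $U_2$. For trees such as a spider with many short legs, $T_n - p$ has far more than $|Z|$ components, so they cannot all be threaded through $Z$; instead nearly all of them must be absorbed by $U_1$, which has room for all but at most $n - |U_1| \le 2m - 2$ of the $n - 1$ vertices, and one must confirm that this small remainder — grouped into at most $2m - 2$ of the largest leftover components — still fits through $Z$ (whose size is at least $2m - 2$) while not overflowing $U_2$. Checking that the three displayed inequalities can be satisfied simultaneously for every tree, using only $n \ge m^2 - m + 1$ and $m \ge 9$, is the delicate bookkeeping.
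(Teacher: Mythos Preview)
Your proof is correct and follows essentially the same strategy as the paper: assume $\overline{d_{U_2}}(w)\ge 2m-2$, invoke Lemma~2 to find a centre vertex whose removal leaves components each of size at most $\tfrac{2}{3}(n-1)$, and then split those components between $U_1$ and $U_2$, threading the $U_2$-side through the at least $2m-2$ available neighbours of $w$. The only difference is bookkeeping---the paper orders the components of one part $K$ by \emph{increasing} size and accumulates until the total reaches $2m-2$ (so $|C|\le|K|\le\tfrac{2}{3}(n-1)$ is immediate and the component count is at most $2m-2$ trivially), whereas you order all components by \emph{decreasing} size and accumulate until the complement fits in $|U_1|-1$, then recover the same bounds via your case split on $t$ and $|C_t|$.
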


\begin{proof}
Assume for contradiction that for some $w \in U_1$, it holds that $\overline{d_{U_2}}(w) \ge 2m - 2$. By Lemma 2, there is a vertex $x$ of $T_n$ such that $V(T_n - x)$ can be partitioned into two sets $K$ and $H$ such that there are no edges between $K$ and $H$ and $(n-1)/3 \le |K|, |H| \le 2(n-1)/3$. Note that $K$ and $H$ are both sub-forests of $T_n$. Let the connected components of $K$ be $C_1, C_2, \dots, C_d$ where $|C_1| \le |C_2| \le \cdots \le |C_d|$. Let $p$ be the minimum positive integer such that $|C_1| + |C_2| + \cdots + |C_{p}| \ge 2m - 2$ and let $C = C_1 \cup C_2 \cup \cdots \cup C_p$. Note that $p$ exists since $|C_1| + |C_2| + \cdots + |C_d| = |K| \ge (n - 1)/3 \ge (m^2 - m)/3 \ge 2m - 2$ for $m \ge 6$. Since $|C_i| \ge 1$ for all $1 \le i \le d$, it follows that $p \le 2m - 2$ and since $C \subseteq K$ it follows that $|C| \le |K| \le 2(n-1)/3$.

We have that $T_n - C$ is a tree since $C$ is a union of connected components of $T_n - x$ and $x \in V(T_n - C)$. Also note that $|V(T_n - C)| = n - |C| \le n - 2m + 2 \le |U_1|$. Now consider the following embedding of $T_n$ into $\overline{G}$. Map $x$ to $w$ and the remaining $|V(T_n - C)| - 1 \le |U_1| - 1$ vertices of $T_n - C$ arbitrarily to distinct vertices in $U_1 - \{ w \}$. Now let $v_1, v_2, \dots, v_p$ be the unique vertices adjacent to $x$ in $C_1, C_2, \dots, C_p$, respectively, in $T_n$. Map $v_1, v_2, \dots, v_p$ to $p \le 2m - 2$ distinct neighbors of $w$ in $U_2$ in $\overline{G}$. Map $C - \{ v_1, v_2, \dots, v_p \}$ arbitrarily to distinct vertices in $U_2$ that have not already been mapped to. Note this is possible since $|C| \le 2(n-1)/3 < n - 2m + 2 \le |U_2|$ is implied by $n \ge m^2 - m + 1 > 6m - 8$ for $m \ge 6$. Since $U_1$ and $U_2$ are complete subgraphs of $\overline{G}$, this embedding succeeds and contradicts the fact that $T_n$ is not a subgraph of $\overline{G}$. The same argument as above shows that for each $w \in U_2$, it holds that $\overline{d_{U_1}}(w) \le 2m - 3$.
\end{proof}

Let $Z = V(G) - (X_1 \cup Y_1 \cup X_2 \cup Y_2)$ and note that $|Z| = 2n - 1 - 2(n - m + 1) = 2m - 3$. Also let $W = V(G) - (U_1 \cup U_2)$ and note that $Z \subseteq W$. We now prove the fifth and sixth claims in our proof of Theorem 4. Claim 3.5 is an intermediary step towards showing Claim 3.6, which guarantees that each vertex $w \in W$ has few neighbors in at least one of $U_1$ or $U_2$ and provides a natural way to associate each such $w$ with one of the two sets.

\begin{claim}
For each $w \in W$, either $\overline{d_{U_1}} (w) < 11n/40$ or $\overline{d_{U_2}}(w) < 11n/40$.
\end{claim}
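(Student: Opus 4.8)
The plan is to argue by contradiction. Suppose some $w \in W$ satisfies both $\overline{d_{U_1}}(w) \ge 11n/40$ and $\overline{d_{U_2}}(w) \ge 11n/40$; I will build an embedding of $T_n$ into $\overline{G}$. Recall that $U_1$ and $U_2$ are independent sets of $G$, hence cliques of $\overline{G}$, with $|U_1|, |U_2| \ge n - 2m + 2$, while by Claim 3.4 the graph $\overline{G}$ has few edges between $U_1$ and $U_2$, so essentially any embedding of a forest into $U_1 \cup U_2$ must send whole connected components into one of the two cliques. Apply Lemma 2 to obtain a vertex $x \in V(T_n)$ whose removal partitions $V(T_n - x)$ into two parts of size at most $2(n-1)/3$; in particular every connected component of $T_n - x$ has at most $2(n-1)/3$ vertices, and the number of such components is $d_{T_n}(x) \le \Delta(T_n) < 11n/20$ by Claim 3.1. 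I will map $x$ to $w$ and distribute the components of $T_n - x$ between $U_1$ and $U_2$, sending the unique vertex of each component adjacent to $x$ (its ``root'') to a $\overline{G}$-neighbour of $w$ inside the clique it is assigned to.

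The core of the argument is a two-dimensional balancing step: partition the components of $T_n - x$ into classes $\mathcal{A}$ and $\mathcal{B}$ so that each class has total size at most $n - 2m + 2 \le |U_i|$ and at most $C := \lceil 11n/40 \rceil \le \overline{d_{U_i}}(w)$ components. Given such a partition the embedding succeeds: map $x \to w$, send the $|\mathcal{A}|$ roots of the $\mathcal{A}$-components to distinct vertices of $\overline{N_{U_1}}(w)$ and the remaining $\mathcal{A}$-vertices arbitrarily into $U_1$ (every edge of $T_n$ meeting these vertices lies inside the clique $U_1$ or is one of the chosen $w$--root edges), and symmetrically for $\mathcal{B}$ and $U_2$. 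To produce the partition, process the components in non-increasing order of size, placing each into the pile of currently smaller total size, except that if that pile already holds $C$ components we place it into the other pile. Since $d_{T_n}(x) < 11n/20 \le 2C$, at every step some pile has fewer than $C$ components, so the procedure never stalls and by construction neither pile exceeds $C$ components; note also $\overline{d_{U_i}}(w) \ge 11n/40$ forces $\overline{d_{U_i}}(w) \ge C$, so the roots have enough landing spots.

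It remains to bound the pile sizes. At most two components exceed $(n-1)/3$ vertices (three would total more than $n-1$); being largest, these are placed first, each into an empty pile, contributing at most $2(n-1)/3 \le n - 2m + 2$. When a component of size $s \le (n-1)/3$ goes into the non-full smaller pile, that pile afterwards has size at most $(n-1+s)/2 \le 2(n-1)/3 \le n - 2m + 2$. When a component of size $s$ is forced into the larger pile because the smaller one is full, the full pile already holds $C$ components each of size at least $s$ (by the ordering), hence total size at least $Cs$; so the other pile had size at most $(n-1) - Cs$ before and at most $(n-1) - (C-1)s \le (n-1) - (C-1) \le n - 2m + 2$ after, using $C \ge 2m - 2$, which holds since $n \ge m^2 - m + 1$ and $m \ge 9$. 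Thus both piles respect both budgets, the partition exists, the embedding of $T_n$ into $\overline{G}$ goes through, and this contradiction proves the claim. I expect the two-dimensional balancing to be the main obstacle: the count $11n/40$, which is half the bound $11n/20$ on $\Delta(T_n) \ge d_{T_n}(x)$ from Claim 3.1, is exactly what allows the component counts to be split evenly between the two cliques, while Lemma 2 is what keeps every individual component small enough for the size budget.
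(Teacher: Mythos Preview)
Your proof is correct. The overall strategy matches the paper's: assume both $\overline{d_{U_i}}(w) \ge 11n/40$, apply Lemma~2 to obtain a centre $x$, map $x \to w$, and split the components of $T_n - x$ between the two cliques $U_1, U_2$, sending each root to a $\overline{G}$-neighbour of $w$. The divergence is in how you obtain the partition. You build it from scratch with a two-constraint greedy procedure (processing components by size, tracking both pile size and pile count), and then verify three separate cases to bound the pile sizes. The paper instead takes the sets $K, H$ that Lemma~2 already hands you---which automatically satisfy the size budget $|K|,|H| \le 2(n-1)/3 \le n-2m+2$---and only adjusts when one of the component-count constraints $d_K(x) \le \overline{d_{U_1}}(w)$ or $d_H(x) \le \overline{d_{U_2}}(w)$ fails. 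In that case, say $d_K(x) > \overline{d_{U_1}}(w)$, it simply moves $d_K(x)-\overline{d_{U_1}}(w)$ components from $K$ to $H$, so that $d_{K'}(x)=\overline{d_{U_1}}(w)$; then $d_{H'}(x)=d_{T_n}(x)-\overline{d_{U_1}}(w) < 11n/20 - 11n/40 \le \overline{d_{U_2}}(w)$, and the size bounds survive because $|K'|\le |K|$ while $|K'|\ge d_{K'}(x)\ge 11n/40 \ge 2m-2$ forces $|H'|\le n-2m+1$. This is shorter and avoids your case analysis. Your greedy argument is self-contained and would work without the size guarantee from Lemma~2 (you only use that every component has at most $2(n-1)/3$ vertices), so it is in a sense more robust, but here the extra generality is not needed. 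The reference to Claim~3.4 in your opening paragraph is not actually used anywhere in the argument and could be dropped.
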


\begin{proof}
Assume for contradiction that some $w \in W$ satisfies that $\overline{d_{U_1}} (w) \ge 11n/40$ and $\overline{d_{U_2}}(w) \ge 11n/40$. By Lemma 2, there is a vertex $x$ of $T_n$ such that $V(T_n - x)$ can be partitioned into two sets $K$ and $H$ such that there are no edges between $K$ and $H$ and $(n-1)/3 \le |K|, |H| \le 2(n-1)/3$.

Now suppose that $d_{K}(x) \le \overline{d_{U_1}} (w)$ and $d_{H}(x) \le \overline{d_{U_2}}(w)$. Consider the following embedding of $T_n$ to $\overline{G}$. Map $x$ to $w$, map $N_{K}(x)$ to $d_K(x)$ distinct vertices in $\overline{N_{U_1}}(w)$ and map $N_H(x)$ to $d_H(x)$ distinct vertices in $\overline{N_{U_2}}(w)$. Map the remaining vertices of $K$ to $|K| - d_K(x)$ distinct vertices in $U_1$ that have not been embedded to and map the remaining vertices of $H$ to $|H| - d_H(x)$ distinct vertices of $U_2$ that have not been embedded to. Note this is possible since $|K|, |H| \le 2(n-1)/3 \le n - 2m + 2 \le |U_1|, |U_2|$ is implied by $n \ge m^2 - m + 1 > 6m - 8$ for $m \ge 6$. This yields a successful embedding of $T_n$ to $\overline{G}$, which is a contradiction.

Therefore either $d_{K}(x) > \overline{d_{U_1}} (w) \ge 11n/40$ or $d_{H}(x) > \overline{d_{U_2}}(w) \ge 11n/40$. Without loss of generality, assume that $d_{K}(x) > \overline{d_{U_1}} (w) \ge 11n/40$. Note that $K$ is the union of $d_K(x)$ connected components of $T_n - x$. Let $C$ be the union of $d_K(x) - \overline{d_{U_1}} (w)$ of these connected components. Let $K' = K - C$ and $H' = H \cup C$. Note that $d_{K'}(x) = \overline{d_{U_1}} (w)$ and that
$$d_{H'}(x) = d_{T_n}(x) - d_{K'}(x) \le \Delta(T_n) - \overline{d_{U_1}} (w) < 11n/40 \le \overline{d_{U_2}}(w)$$
where $\Delta(T_n) < 11n/20$ by Claim 3.1. It follows that $|K'| \ge d_{K'}(x) = \overline{d_{U_1}} (w) \ge 11n/40 \ge 2m - 2$ since $n \ge m^2 - m + 1 \ge 80(m-1)/11$ for $m \ge 8$. This implies that $|H'| = n - 1 - |K'| \le n - 2m + 1 < |U_2|$. Furthermore $|K'| \le |K| \le |U_1|$. Now applying the embedding described in the previous case to $K'$ and $H'$ in place of $K$ and $H$ yields the same contradiction.
\end{proof}

\begin{claim}
For each $w \in W$, either $d_{U_1} (w) \le m - 1$ or $d_{U_2}(w) \le m - 1$.
\end{claim}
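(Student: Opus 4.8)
The plan is to combine Claim 3.5 with the near-completeness of the bipartite graph between $U_1$ and $U_2$ furnished by Claim 3.4. By Claim 3.5 we may assume without loss of generality that $\overline{d_{U_1}}(w) < 11n/40$; the goal is then to deduce $d_{U_2}(w) \le m - 1$, since the symmetric case $\overline{d_{U_2}}(w) < 11n/40$ gives $d_{U_1}(w) \le m - 1$ by the same argument with the roles of $U_1$ and $U_2$ exchanged. First I would record that $d_{U_1}(w) = |U_1| - \overline{d_{U_1}}(w) \ge (n - 2m + 2) - 11n/40 = 29n/40 - 2m + 2$, and note that since $n \ge m^2 - m + 1$ and $m \ge 9$ this is at least $3m - 3$; this single numeric inequality is where the quadratic bound on $n$ enters.

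Next, suppose for contradiction that $d_{U_2}(w) \ge m$, and fix distinct vertices $y_1, \dots, y_m \in U_2 \cap N_G(w)$. I would then construct an $m$-matching inside $N_G(w)$ greedily: having chosen distinct $x_1, \dots, x_{i-1} \in U_1 \cap N_G(w)$ with $x_j y_j \in E(G)$ for $j < i$, observe that $U_1 \cap N_G(w) \cap N_G(y_i)$ has size at least $d_{U_1}(w) - \overline{d_{U_1}}(y_i) \ge d_{U_1}(w) - (2m - 3)$ by Claim 3.4 applied to $y_i \in U_2$, so after deleting the $i - 1 \le m - 1$ previously chosen vertices there remains at least $d_{U_1}(w) - (3m - 4) \ge 1$ admissible choice for $x_i$. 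This yields distinct $x_1, \dots, x_m \in U_1$; since $U_1 \cap U_2 = \emptyset$ (from the pairwise disjointness of $X_1, X_2, Y_1, Y_2$) and the $y_i$ are distinct, the $2m$ vertices $x_1, y_1, \dots, x_m, y_m$ are all distinct, all lie in $N_G(w)$, and satisfy $x_i y_i \in E(G)$.

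Hence $\{x_1 y_1, \dots, x_m y_m\}$ is a matching of size $m$ in the subgraph of $G$ induced by $N_G(w)$, so $G$ contains $F_m$ on $\{x_1, y_1, \dots, x_m, y_m, w\}$, contradicting the hypothesis on $G$. Therefore $d_{U_2}(w) \le m - 1$, which proves the claim. The main obstacle is simply the arithmetic guaranteeing $d_{U_1}(w) \ge 3m - 3$, so that the greedy selection survives all $m$ steps; the rest is bookkeeping, with the disjointness of $U_1$ and $U_2$ being what makes the exhibited matching legitimate.
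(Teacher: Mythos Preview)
Your proof is correct and follows essentially the same approach as the paper: both combine Claim 3.5 with Claim 3.4 to find an $m$-matching between $N_{U_1}(w)$ and $N_{U_2}(w)$, yielding an $F_m$ centered at $w$. The only cosmetic differences are that you swap the roles of $U_1$ and $U_2$ in the WLOG choice and spell out the greedy matching construction explicitly, whereas the paper simply asserts the matching exists from the degree bound $d_S(x) \ge m$; the underlying arithmetic inequality $29n/40 \ge 5(m-1)$ is identical in both arguments.
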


\begin{proof}
By Claim 3.5, either $\overline{d_{U_1}} (w) < 11n/40$ or $\overline{d_{U_2}}(w) < 11n/40$ for each $w \in W$. Given some $w \in W$, suppose that $\overline{d_{U_2}} (w) < 11n/40$ and assume for contradiction that $d_{U_1}(w) \ge m$. Let $S = N_{U_2}(w)$ be the set of neighbors of $w$ in $U_2$. Note that since $\overline{d_{U_2}} (w) < 11n/40$, it follows that $|S| = d_{U_2}(w) > |U_2| - 11n/40 \ge 29n/40 - 2m + 2$. By Claim 3.4, each $x \in N_{U_1} (w) \subseteq U_1$ satisfies that $\overline{d_{U_2}}(x) \le 2m - 3$. This implies that for each $x \in N_{U_1} (w)$, we have that $d_S (x) \ge |S| - \overline{d_{U_2}}(x) \ge 29n/40 - 4m + 5 > m$ since $n \ge m^2 -m  + 1 > 200(m - 1)/29$ for $m \ge 7$. This implies that there is a matching with $m$ edges between $N_{U_1}(w)$ and $S$. With the vertex $w$, this matching yields that $G$ contains $F_m$ as a subgraph, which is a contradiction. By the same argument, if $\overline{d_{U_1}} (w) < 11n/40$ then it must follow that $d_{U_2}(w) \le m - 1$.
\end{proof}

In the remainder of the proof, we divide into the two cases in which $T_n$ has many and few leaves, both of which lead to a contradiction. Since $|Z| = 2m - 3$, Claim 3.6 implies that either at least $m - 1$ vertices $w \in Z$ satisfy that $d_{U_1} (w) \le m - 1$ or at least $m - 1$ vertices $w \in Z$ satisfy that $d_{U_2} (w) \le m - 1$. Without loss of generality, assume that there is a set $Z_1$ such that $|Z_1| = m - 1$ and each $w \in Z_1$ satisfies that $d_{U_1} (w) \le m - 1$. Let $Z_1 = \{ z_1, z_2, \dots, z_{m-1} \}$. We now consider the following two cases.

\begin{case}
$|L(T_n)| \ge m + 1$.
\end{case}

We claim there is a set $D$ consisting of $|D| = m - 1$ leaves chosen such that $d \le m - 4$ where $d$ is the maximum value $d_{D}(x)$ over all $x \in V(T_n) - D$. We choose the set $D$ as follows. Let $y \in V(T_n) - L(T_n)$ be such that $d_{L(T_n)}(y)$ is maximized. By Claim 3.1, it follows that $d_{L(T_n)}(y) \le m - 2$. If $d_{L(T_n)}(y) \le m - 4$, then any choice of $m - 1$ leaves forms a valid set $D$. Otherwise, let $D$ consist of $m - 4$ leaves adjacent to $y$ and three leaves not adjacent to $y$. Note that this is possible since $|L(T_n) - N_{L(T_n)}(y)| \ge m + 1 - (m - 2) = 3$. Since $m \ge 7$ and $L(T_n)$ is an independent set, this choice of $D$ ensures that $d \le m - 4$, verifying our claim.

Now let $D = \{d_1, d_2, \dots, d_{m-1} \}$ and let $N_{T_n}(D) = \{ y_1, y_2, \dots, y_k \}$ where $k = |N_{T_n}(D)|$. Note that $d_D(y_i) \le d$ for all $1 \le i \le d$ and there is some $i$ such that $d_D(y_i) = d$. Since each $d_i$ is adjacent to exactly one $y_i$, it follows that $k \le m - 1$. Consider the following embedding of $T_n$ to $\overline{G}$. Begin by mapping $d_i$ to $z_i$ for all $1 \le i \le m - 1$. For each $1 \le i \le k$, let $N_i$ be the subset of $Z_1$ that $N_D(y_i)$ is embedded to, or in other words the set of all $z_j$ such that $d_j \in N_D(y_i)$. Note that $|N_i| \le d$ for each $1 \le i \le k$. By Claim 3.6, each $z_i$ is not adjacent to at most $m - 1$ vertices of $X_1 \subseteq U_1$ in $\overline{G}$. Thus the number of vertices in $X_1$ adjacent to all vertices in $N_i$ in $\overline{G}$ is at least
\begin{align*}
|X_1| - |N_i| \cdot (m - 1) &\ge n - 3m + 3 - d(m - 1) \\
&\ge m^2 - 4m + 4 - d(m - 1) \\
&= (m - 3 - d)(m - 1) + 1 \\
&> m - 1 \ge k
\end{align*}
where the last inequality follows from $d \le m - 4$. Therefore greedily selecting $k$ vertices $u_1, u_2, \dots, u_k$ such that $u_i$ is adjacent to all vertices in $N_i$ in $\overline{G}$ and $u_i \in X_1$ for all $1 \le i \le k$ succeeds. Now note that for each $1 \le i \le k$, either $y_i \in A(T_n)$ and no vertices of $N_D(y_i)$ are in $A(T_n)$ or $y_i \not \in A(T_n)$. This implies that since $|N_D(y_i)| = |N_i| \ge 1$ for each $i$,
\begin{align*}
|A(T_n) \cap \{ y_1, y_2, \dots, y_k \}| + |A(T_n) \cap D| &\le |N_1| + |N_2| + \cdots + |N_k| \\
&= |D| = m - 1.
\end{align*}
By Claim 3.2, since $|V(T_n - D)| = n - m + 1 = |X_1| + |Y_1|$, the forest $T_n - D$ can be embedded to $X_1 \cup Y_1$ in $\overline{G}$ such that $y_i$ is mapped to $u_i$ for all $1 \le i \le k$. This yields a successful embedding of $T_n$ to $\overline{G}$, which is a contradiction.

\begin{case}
$|L(T_n)| \le m$.
\end{case}

By Lemma 1, there is a subset $K \subseteq A(T_n)$ such that each $w \in K$ satisfies $d_{T_n}(w) = 2$ and $w$ is not adjacent to any leaves of $T_n$, no two vertices in $K$ have a common neighbor, and
$$|K| \ge \frac{1}{4} \left( n - 8 |L(T_n)| + 12 \right).$$
Now note that
$$|K| + |L(T_n)| \ge \frac{1}{4} \left( n - 4m + 12 \right) > m + 3$$
since $n \ge m^2 - m + 1 > 8m$ for $m \ge 9$. Therefore since $|K| + |L(T_n)|$ is an integer, it holds that $|K| + |L(T_n)| \ge m + 4$. Since $|L(T_n)| \le m$, this implies that $|K| \ge 4$.

We now claim that there is a set $D \subseteq K \cup L(T_n)$ such that $|D| = m - 1$ and $d \le m - 5$ where $d$ is the maximum value $d_D(x)$ over all $x \in V(T_n) - D$. If $|K| \ge 5$, then choosing any subset of $K \cup L(T_n)$ of size $m - 1$ containing at least five vertices of $K$ yields the desired set $D$. This is because each vertex $x \in V(T_n) - D$ can be adjacent to at most one vertex in $K$ and therefore $d_D(x) \le |D| - |D \cap K| + 1 \le m - 5$. Thus it suffices to show $D$ exists when $|K| = 4$ and $|L(T_n)| = m$. Note that $|K| = 4$ implies that $V(T_n) - (K \cup L(T_n))$ contains at least four vertices since $K \cup L(T_n)$ is an independent set and no two vertices in $K$ have a common neighbor. Let $y, z \in V(T_n) - (K \cup L(T_n))$ be the two vertices adjacent to the most leaves of $T_n$. By Claim 3.1, there are at least two leaves not adjacent to $y$ and at least two not adjacent to $z$. Let $D$ consist of a leaf not adjacent to $y$, a different leaf not adjacent to $z$, the four vertices in $K$ and $m - 7$ other leaves. Assume for contradiction that $x \in V(T_n) - D$ is such that $d_D(x) \ge m - 4$. It follows that $x$ must be adjacent to one vertex in $K$ and all of the $m - 5$ leaves in $D$. Thus $x \not \in \{y, z \}$ since $y$ and $z$ are adjacent to at most $m - 6$ leaves in $D$. Now note that since each leaf of $T_n$ is adjacent to exactly one other vertex, it holds that
$$3d_{L(T_n)}(x) \le d_{L(T_n)}(x) + d_{L(T_n)}(y) + d_{L(T_n)}(z) \le |L(T_n)| \le m.$$
Therefore $d_{L(T_n)}(x) \le m/3 < m - 5$ since $m \ge 9$, which is a contradiction. This shows that the desired set $D$ exists.

As in Case 1, let $D = \{d_1, d_2, \dots, d_{m-1} \}$ and let $N_{T_n}(D) = \{ y_1, y_2, \dots, y_k \}$ where $k = |N_{T_n}(D)|$. Define $N_i$ as in Case 1 and note again that $|N_i| \le d$ for each $1 \le i \le k$. Also note that since each vertex in $D$ has degree at most two, it follows that $k \le 2m - 2$. As before, the number of vertices in $X_1$ adjacent to all vertices in $N_i$ in $\overline{G}$ is at least
$$|X_1| - |N_i| \cdot (m - 1) \ge (m - 3 - d)(m - 1) + 1 > 2m - 2 \ge k,$$
since $d \le m - 5$. Therefore, it again follows that greedily selecting $k$ vertices $u_1, u_2, \dots, u_k$ such that $u_i$ is adjacent to all vertices in $N_i$ in $\overline{G}$ and $u_i \in X_1$ for all $1 \le i \le k$ succeeds. Now note that since $K \subseteq A(T_n)$, it follows that the neighbors of vertices in $K \cap D$ are in $B(T_n)$. Thus $A(T_n) \cap \{ y_1, y_2, \dots, y_k \}$ includes only neighbors of leaves in $L(T_n) \cap D$. By the same reasoning as in the previous case, we have that
$$|A(T_n) \cap \{ y_1, y_2, \dots, y_k \}| + |A(T_n) \cap L(T_n) \cap D| \le |L(T_n) \cap D|.$$
Now it follows that
\begin{align*}
|A(T_n) &\cap \{ y_1, y_2, \dots, y_k \}| + |A(T_n) \cap D| \\
&= |A(T_n) \cap \{ y_1, y_2, \dots, y_k \}| + |A(T_n) \cap L(T_n) \cap D| + |K \cap D| \\
&\le |L(T_n) \cap D| + |K \cap D| = |D| = m - 1.
\end{align*}
By Claim 3.2, since $|V(T_n - D)| = n - m + 1 = |X_1| + |Y_1|$, the forest $T_n - D$ can be embedded to $X_1 \cup Y_1$ in $\overline{G}$ such that $y_i$ is mapped to $u_i$ for all $1 \le i \le k$. This yields a successful embedding of $T_n$ to $\overline{G}$, which is a contradiction. This completes the proof of Theorem 4.

\section{Proof of Theorem 5}

Let $UC_n$ be a particular connected graph on $n$ vertices containing a single cycle $c(UC_n)$ and let $G$ be a graph with $2n - 1$ vertices. Assume for the sake of contradiction that $G$ does not contain $F_m$ as a subgraph and $\overline{G}$ does not contain $UC_n$ as a subgraph. Our proof of Theorem 5 follows a sequence of claims similar to those used to prove Theorem 4. We begin by showing the existence of the sets $X_i, Y_i$ and $U_i$ for $i = 1, 2$ but require a more involved approach than in Theorem 4. From this point forward, we construct embeddings of the tree formed by eliminating an edge $t_1 t_2$ on $c(UC_n)$ to $\overline{G}$ while paying close attention to the images of $t_1$ and $t_2$ to ensure that these two vertices are joined by an edge in $\overline{G}$. We prove Theorem 5 in the case when $T_n$ contains a vertex adjacent to many leaves, again requiring a different method from that used to prove the corresponding claim in Theorem 4. The proof of Theorem 5 then follows a very similar structure to that of Theorem 4 with additional attention required for $t_1$ and $t_2$. Where details are similar, we refer to parts of the proof of Theorem 4. We now begin our proof of Theorem 5.

As proven by Shi in \cite{shi2010ramsey}, $R(C_n, F_m) = 2n - 1$ for all $n > 3m$. Therefore we may assume that $UC_n$ is not the cycle $C_n$. If $UC_n$ is not a cycle, there must be an edge $t_1 t_2$ of $c(UC_n)$ such that removing the edge $t_1 t_2$ yields a tree in which $t_1$ is not a leaf. Let $T_n$ be the tree formed on removing the edge $t_1 t_2$ from $UC_n$. As in the proof of Theorem 4, define the sets $U_v, X_v$ and $Y_v$ for any $v \in V(G)$. Since $G$ does not contain $F_m$ as a subgraph, the same inequalities (3.1)--(3.4) on the sizes $|U_v|, |X_v|$ and $|Y_v|$ as in the proof of Theorem 4 hold here. In the proof of Theorem 5, we often explicitly exclude $t_2$ from sets of leaves even though this is only necessary in the case that $t_2$ is a leaf of $T_n$.

Our first two claims show the existence of analogues of the sets $X_1, Y_1$ and $U_1$ from our proof of Theorem 4. The first claim we prove implicitly involves the fact that any graph $H$ satisfying $\delta(H) \ge |V(H)|/2$ is pancyclic -- contains a cycle of every length between $3$ and $|V(H)|$, inclusive. We prove this fact using an argument of Droogendijk \cite{1158044}.

\begin{claim}
We have that $\Delta(G) \ge n$.
\end{claim}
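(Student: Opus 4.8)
The plan is to argue by contradiction, exactly mirroring the opening of the proof of Theorem 4. Suppose $\Delta(G) \le n - 1$, so that $\delta(\overline{G}) \ge (2n-1) - 1 - (n-1) = n - 1$. Since $|V(T_n)| = n$ and $T_n$ is a tree, Lemma 3 immediately embeds $T_n$ into $\overline{G}$; but we need the unicyclic graph $UC_n$, not merely the tree $T_n$. The extra ingredient is the pancyclicity fact stated just before the claim: any graph $H$ with $\delta(H) \ge |V(H)|/2$ is pancyclic. Here $\delta(\overline{G}) \ge n - 1 \ge (2n-1)/2 = |V(\overline{G})|/2$ (this inequality holds since $n \ge 1$), so $\overline{G}$ is pancyclic and in particular contains a cycle of length $|c(UC_n)|$.

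The main work is to upgrade "$\overline{G}$ contains a cycle of the right length'' plus "$\delta(\overline{G})$ is large'' into "$\overline{G}$ contains $UC_n$.'' First I would fix a cycle $C$ in $\overline{G}$ of length $\ell = |c(UC_n)|$; embed $c(UC_n)$ onto $C$. Then $UC_n$ minus the vertices of its cycle is a forest, each of whose trees is attached to $UC_n$ at a single cycle vertex. I would embed these hanging trees one at a time, greedily, rooted at the already-placed image of their attachment point: at each step we have embedded at most $n - 1 \le |V(\overline{G})| - n$ vertices, so any vertex we are extending from still has an un-used neighbor in $\overline{G}$ because its degree is at least $n - 1 > $ (number of already-used vertices minus one) — the standard greedy tree-embedding bound, which is exactly the content used in Lemma 3 but applied componentwise. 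Since $\delta(\overline{G}) \ge |V(T_n)| - 1$ is precisely the hypothesis of Lemma 3 and $UC_n$ has only $n$ vertices, one clean way to phrase this is: delete one cycle-edge of the embedded $C$ to recover the tree $T_n$ sitting inside $\overline{G}$ with its cycle-edge endpoints adjacent, which is exactly $UC_n$; alternatively invoke Lemma 3 after first placing the cycle. Either way we obtain $UC_n \subseteq \overline{G}$, contradicting our assumption.

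The step I expect to be the genuine obstacle is reconciling the cycle placement with the tree placement: Lemma 3 as stated embeds a \emph{tree} with one prescribed vertex image, so I cannot directly feed it $UC_n$. The cleanest route is (i) use pancyclicity to get a cycle $C \subseteq \overline{G}$ of length $\ell = |c(UC_n)|$, (ii) observe that $UC_n$ is obtained from the tree $T_n$ by adding the edge $t_1t_2$, and (iii) extend the embedding of $C$ to an embedding of all of $UC_n$ by greedily attaching the forest $UC_n - V(c(UC_n))$ component by component, each component a tree attached at one vertex of $C$, using at each extension step that the current vertex has degree $\ge n - 1$ in $\overline{G}$ while fewer than $n - 1$ other vertices have been used. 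This greedy extension succeeds for the same reason Lemma 3 does, and the bookkeeping is routine once the cycle is in place. Hence $\Delta(G) \le n - 1$ is impossible and $\Delta(G) \ge n$.
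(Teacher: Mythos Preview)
There is a genuine arithmetic error in your pancyclicity step. You assert $\delta(\overline{G}) \ge n - 1 \ge (2n-1)/2$, but $(2n-1)/2 = n - \tfrac12 > n - 1$, so this inequality is false for every $n$. With only $\delta(\overline{G}) \ge n - 1$ you fall exactly one short of the Bondy threshold on $2n-1$ vertices, and pancyclicity of $\overline{G}$ is not available. Your greedy extension of the pendant forest, once a cycle of the correct length has been placed, is fine and is exactly what the paper does; the gap is solely in producing the cycle.

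The paper repairs this by splitting $\Delta(G) \le n-1$ into two subcases. If $\Delta(G) \le n-2$ then $\delta(\overline{G}) \ge n$; deleting any vertex $v$ leaves a graph on $2n-2$ vertices with minimum degree at least $n-1 = |V(\overline{G}-v)|/2$, so Dirac gives a Hamiltonian cycle $u_1u_2\cdots u_{2n-2}$, and since $v$ has at least $n$ neighbours among these $2n-2$ vertices a pigeonhole argument finds two of the form $u_i,u_{i+k-2}$, yielding a $k$-cycle through $v$ for $k=|c(UC_n)|$. The delicate case is $\Delta(G)=n-1$ exactly, where your minimum-degree bound genuinely cannot be improved; here the paper locates a $k$-cycle by a different route, invoking Shi's result $R(C_k,F_m)=2k-1$ for $k>3m$ when $k$ is large, and using the large independent set $U_u\subseteq N(u)$ (for a vertex $u$ of degree $n-1$) as a clique in $\overline{G}$ containing $C_k$ when $k \le n-2m+1$.
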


\begin{proof}
Assume for contradiction that $\Delta(G) \le n - 1$. We will consider the two cases in which $\Delta(G) \le n - 2$ and $\Delta(G) = n - 1$, separately. We first consider the case wherein $\Delta(G) \le n - 2$, which implies that $\delta(\overline{G}) \ge |V(G)| - 1 - (n - 2) = n$. Let $v \in V(G)$ be an arbitrary vertex and consider the graph $\overline{G} - v$. Now note that $\delta(\overline{G} - v) \ge n - 1 = |V(\overline{G} - v)|/2$, which implies by Dirac's Theorem that $\overline{G} - v$ has a Hamiltonian cycle. Let $u_1, u_2, \dots, u_{2n - 2}$ be the vertices of $\overline{G} - v$ ordered along this Hamiltonian cycle and let $k = |c(UC_n)|$. Since $v$ is adjacent in $\overline{G}$ to at least $n$ vertices in $\overline{G} - v$, it must be adjacent to some two of the form $u_i$ and $u_{i + k - 2}$ where indices are taken modulo $2n - 2$. The vertices $v, u_i, u_{i+1}, \dots, u_{i + k - 2}$ now form a cycle $C$ of length $k = |c(UC_n)|$.

Consider the following procedure to embed $UC_n$ to $\overline{G}$. Begin by mapping the cycle $c(UC_n)$ to $C$ arbitrarily. At any point in this procedure, let $H$ denote the subgraph of $\overline{G}$ that has so far been embedded to. Note that $H = C$ initially and $C \subseteq H$ always in this procedure. Greedily extend the embedding as follows: if $u_1 \in V(UC_n)$ has been mapped to $u_2 \in V(H) \subseteq V(G)$, $w \in V(UC_n)$ has not been embedded to $\overline{G}$ and $w$ is adjacent to $u_1$ in $UC_n$, map $w$ to some element of $\overline{N}_{V(G) - V(H)}(u_2)$ if it is non-empty. Note that each possible $w$ has a unique neighbor among the vertices of $V(UC_n)$ that have been embedded to $H$ because the unique cycle of $UC_n$ is $c(UC_n)$, which at the start of the procedure has already been embedded to $H$. Furthermore, if not all of $UC_n$ has been embedded to $\overline{G}$ then some such $w \in V(UC_n)$ must exist since each vertex of $UC_n$ is connected in $UC_n - E(c(UC_n))$ to a vertex in $c(UC_n)$, which begins embedded to $H$. Since $\overline{G}$ does not contain $UC_n$ as a subgraph, the embedding must fail with $\overline{N_{V(G) - V(H)}}(u_2) = \emptyset$ for some $u_2 \in V(H)$ where $|V(H)| \le n - 1$. This implies that $d(u_2) \ge |V(G) - V(H)| \ge n$, which contradicts $\Delta(G) \le n - 2$.

Now consider the case in which $\Delta(G) = n - 1$. Let $u \in V(G)$ be such that $d(u) = n - 1$ and note that $|U_u| \ge d(u) - 2m + 2 = n - 2m + 1$. We now show that $\overline{G}$ contains a cycle $C_k$ of length $k = |c(UC_n)|$. Consider the case $k > n - 2m + 1$. As proven by Shi in \cite{shi2010ramsey}, $R(C_\ell, F_m) = 2\ell - 1$ for $\ell > 3m$. Since $2k - 1 \le 2n - 1 = |V(G)|$, $k > n - 2m + 1 \ge 3m$ and $G$ does not contain $F_m$, it follows that $\overline{G}$ contains $C_k$ as a subgraph. If $k \le n - 2m + 1$, then $U_u$ is a complete graph in $\overline{G}$ of size at least $k$ and thus contains $C_k$.

Now apply the same embedding procedure as used in the previous case to embed $UC_n$ to $\overline{G}$, beginning by mapping $c(UC_n)$ to the $C_k$ subgraph of $\overline{G}$. Since $\overline{G}$ does not contain $UC_n$ as a subgraph, this yields that $d(u_2) \ge n$ for some $u_2 \in V(G)$ as in the previous case, contradicting $\Delta(G) = n - 1$ and proving the claim.
\end{proof}

Let $v \in V(G)$ be a vertex satisfying $d(v) \ge n$, which exists by Claim 4.1. Now define $X_1, Y_1$ and $U_1$ as in the proof of Theorem 4. We next prove two claims that show the existence of $X_2, Y_2$ and $U_2$, the second of which resembles Claim 3.3 in the proof of Theorem 4. However, a more involved proof accounting for $UC_n$ will be needed. We remark that this proof also implies Claim 3.3 in the proof of Theorem 4. In order to provide a simpler proof of Theorem 4, we did not use this argument for Claim 3.3. Let $O_1 = V(G) - (X_1 \cup Y_1)$.

\begin{claim}
Suppose it holds that $\overline{d_{O_1}}(u) \ge m - 1$ for all $u \in X_1 \cup Y_1$. Let $K \subseteq X_1 \cup Y_1$ and $H \subseteq O_1$ be such that $|H| \ge n + 1$ and $2|K| \ge |H| + 1$. Then there exists three distinct vertices $x, y, z$ with $x, z \in K$ and $y \in H$ such that $y$ is adjacent to $x$ and $z$ in $\overline{G}$.
\end{claim}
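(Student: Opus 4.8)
The plan is to prove this by a simple double-counting argument on the edges of $\overline{G}$ lying between $K$ and $H$. The point is that although the hypothesis only controls $\overline{d_{O_1}}(u)$ for $u \in X_1 \cup Y_1$, the set $H$ comprises almost all of $O_1$. Indeed, since $|X_1 \cup Y_1| = n - m + 1$, we have
$$|O_1| = (2n - 1) - (n - m + 1) = n + m - 2,$$
and therefore, using $|H| \ge n + 1$,
$$|O_1 \setminus H| \le (n + m - 2) - (n + 1) = m - 3.$$
Consequently, for every $x \in K \subseteq X_1 \cup Y_1$ we can transfer the degree bound from $O_1$ down to $H$:
$$\overline{d_H}(x) \ge \overline{d_{O_1}}(x) - |O_1 \setminus H| \ge (m - 1) - (m - 3) = 2.$$

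Next I would sum this inequality over all $x \in K$. The number of edges of $\overline{G}$ with one endpoint in $K$ and the other in $H$ equals $\sum_{x \in K} \overline{d_H}(x) \ge 2|K| \ge |H| + 1$, where the last step is exactly the hypothesis $2|K| \ge |H| + 1$. Rewriting this same edge count from the $H$ side gives $\sum_{y \in H} \overline{d_K}(y) \ge |H| + 1$, so by the pigeonhole principle there is some $y \in H$ with $\overline{d_K}(y) \ge 2$. Picking two distinct $\overline{G}$-neighbors $x, z \in K$ of $y$ produces the desired triple: since $K \subseteq X_1 \cup Y_1$ and $H \subseteq O_1 = V(G) \setminus (X_1 \cup Y_1)$ are disjoint, $y$ is distinct from both $x$ and $z$, and by construction $y$ is adjacent in $\overline{G}$ to each of $x$ and $z$.

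The only step with any content is the first one — transferring the lower bound on $\overline{G}$-degree from $O_1$ to $H$ — which rests on the exact cardinality $|O_1| = n + m - 2$ together with $|H| \ge n + 1$; after that, the argument is an immediate pigeonhole. I do not anticipate a genuine obstacle here, but one must keep the two containments ($K \subseteq X_1 \cup Y_1$ and $H \subseteq O_1$) straight, since it is precisely their disjointness that guarantees $x$, $y$, $z$ are three distinct vertices rather than two.
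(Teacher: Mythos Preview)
Your proof is correct and follows essentially the same approach as the paper: both arguments compute $|O_1 \setminus H| \le m-3$ to deduce $\overline{d_H}(x) \ge 2$ for each $x \in K$, then double-count edges between $K$ and $H$ against the bound $2|K| \ge |H|+1$. The only cosmetic difference is that the paper phrases the final step by contradiction (if no such $y$ existed, there would be at most $|H|$ edges) whereas you apply pigeonhole directly.
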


\begin{proof}
Assume for contradiction that no such $x, y$ and $z$ exist. It follows that each $x \in H$ is adjacent to at most one vertex of $K$ in $\overline{G}$ and therefore that the number of edges between $K$ and $H$ in $\overline{G}$ is at most $|H|$. Note that for each $y \in K \subseteq X_1 \cup Y_1$,
$$\overline{d_{H}}(y) \ge \overline{d_{O_1}}(y) - |O_1 - H| \ge m - 1 - (n + m - 2 - (n + 1)) = 2.$$
The number of edges between $K$ and $H$ in $\overline{G}$ is therefore at least $2|K| \ge |H| + 1$, which is a contradiction.
\end{proof}

\begin{claim}
There is a vertex $u \in X_1 \cup Y_1$ such that $d_{O_1}(u) \ge n$.
\end{claim}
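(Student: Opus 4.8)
The plan is to argue by contradiction: suppose every $u \in X_1 \cup Y_1$ satisfies $d_{O_1}(u) \le n-1$. Since $|O_1| = (2n-1) - |X_1 \cup Y_1| = n+m-2$, this says $\overline{d_{O_1}}(u) \ge m-1$ for all $u \in X_1 \cup Y_1$, which is exactly the hypothesis of Claim 4.2; I would then derive a contradiction by building a copy of $UC_n$ in $\overline{G}$. Every such copy is built the same way: choose a set $R \subseteq V(T_n)$ with $|R| = m-1$, use Claim 3.2 to embed the forest $T_n - R$ (which has exactly $|X_1| + |Y_1| = n-m+1$ vertices) onto all of $X_1 \cup Y_1$ with $t_1$ and $t_2$ sent to two distinct vertices of $X_1$, and then re-embed the vertices of $R$ into $O_1$. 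Sending $t_1, t_2$ into $X_1$ realises the cycle edge $t_1 t_2$, since $X_1 \subseteq U_1$ and $U_1$ is a clique of $\overline{G}$. The set $R$ is chosen so that every vertex of $R$ has all of its $T_n$-neighbours inside $T_n - R$; then placing the image of a leaf of $R$ only requires one unused $\overline{G}$-neighbour in $O_1$ of the image of its parent, and placing the image of a degree-two vertex of $R$ only requires one unused $\overline{G}$-neighbour in $O_1$ common to the images of its two neighbours.

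Case 1: $T_n$ has at least $m-1$ leaves other than $t_2$. Take $R$ to be $m-1$ of them; then $T_n - R$ is a subtree containing $t_1$ and $t_2$, and Claim 3.2 applies since $|A(T_n) \cap \{t_1,t_2\}| + |A(T_n) \cap R| \le 2 + (m-1) = m+1$. Re-attach the leaves of $R$ one at a time: when processing the $j$-th leaf at most $j-1 \le m-2$ vertices of $O_1$ are used, while the image of its parent has $\overline{d_{O_1}} \ge m-1 > j-1$, so it has an unused $\overline{G}$-neighbour in $O_1$. This gives $UC_n \subseteq \overline{G}$.

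Case 2: $|L(T_n)| \le m-1$. Apply Lemma 1 with $F = \{t_1,t_2\}$ to obtain a set $D \subseteq A(T_n) \setminus \{t_1,t_2\}$ of pairwise-nonadjacent degree-two vertices, none adjacent to a leaf and no two with a common neighbour, of size $|D| \ge \tfrac{1}{4}(n - 8|L(T_n)| + 8)$, which is much larger than $m$ when $m \ge 18$. Take $R = \{\ell\} \cup R'$, where $\ell$ is a leaf with $\ell \ne t_1, t_2$ and $R' \subseteq D$ with $|R'| = m-2$, chosen to avoid the at most two members of $D$ adjacent to $t_1$ or $t_2$ (at most one each, by the common-neighbour condition). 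Iterating Claim 4.2 on shrinking sets $K' \subseteq X_1$ and $H' \subseteq O_1$ yields $m-2$ vertex-disjoint configurations, the $i$-th consisting of $x_i, z_i \in X_1$ with a common $\overline{G}$-neighbour $y_i \in O_1$; exactly $m-2$ are available since each removes one vertex from $H'$ while Claim 4.2 needs $|H'| \ge n+1$ and $|O_1| = n+m-2$. Now use Claim 3.2 to embed $T_n - R$ onto $X_1 \cup Y_1$, sending $t_1, t_2$ to two unused vertices of $X_1$ and the two $T_n$-neighbours of the $i$-th vertex of $R'$ to $x_i$ and $z_i$ (all prescribed images are distinct by the properties of $D$, and the color-class count needed for Claim 3.2 is again at most $2 + (m-1) = m+1$). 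Finally set the image of the $i$-th vertex of $R'$ to $y_i$, and place the image of $\ell$ at any $\overline{G}$-neighbour of the image of its parent in $O_1$ other than $y_1, \dots, y_{m-2}$ (one exists since that image has $\overline{d_{O_1}} \ge m-1 > m-2$). This is a copy of $UC_n$ in $\overline{G}$.

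In either case the standing assumption that $\overline{G}$ contains no $UC_n$ is contradicted, so the supposition fails and some $u \in X_1 \cup Y_1$ has $\overline{d_{O_1}}(u) \le m-2$, i.e. $d_{O_1}(u) \ge n$. I expect the main obstacle to be Case 2: one must match the number of deleted degree-two vertices to the supply of configurations Claim 4.2 can provide (this forces exactly one deleted vertex to be a leaf rather than a degree-two vertex), use the structural properties from Lemma 1 to keep all prescribed images distinct and away from $t_1, t_2$ so that Claim 3.2 can be invoked with prescribed images, and verify the several numerical inequalities ($|D| \ge m$, $|X_1| \ge 2m-2$, the color-class bound for Claim 3.2, and the size conditions for Claim 4.2) — which is precisely where the hypotheses $m \ge 18$ and $n \ge m^2 - m + 1$ are used.
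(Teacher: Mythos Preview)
Your proposal is correct and follows essentially the same strategy as the paper: assume $\overline{d_{O_1}}(u) \ge m-1$ for all $u \in X_1 \cup Y_1$, split into cases by the number of leaves, iterate Claim~4.2 to pre-select hosts in $O_1$ for deleted degree-two vertices, embed $T_n$ minus the deleted set onto $X_1 \cup Y_1$ with $t_1,t_2$ in $X_1$, and reattach. The only tactical differences are that you remove exactly one leaf and $m-2$ members of $D$ (whereas the paper removes all of $L(T_n)\setminus\{t_2\}$ and $m-1-|L|$ members of $D$), and that you invoke Claim~3.2 as a black box for the embedding in Case~2 rather than writing out the bipartite placement by hand; neither changes the substance of the argument.
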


\begin{proof}
Assume for contradiction that $d_{O_1}(u) \le n - 1$ for all $u \in X_1 \cup Y_1$. Note that $|O_1| = 2n - 1 - |X_1 \cup Y_1| = n + m - 2$. Thus for all $u \in X_1 \cup Y_1$,
$$\overline{d_{O_1}}(u) = |O_1| - d_{O_1}(u) \ge n + m - 2 - (n - 1) = m - 1.$$

If $T_n$ has at least $m$ leaves then let $L \subseteq L(T_n)$ be a set of $|L| = m - 1$ of these leaves such that $t_2 \not \in L$, and let $x_1, x_2, \dots, x_k$ be the vertices adjacent to $L$ in $T_n$. Let $L = L_1 \cup L_2 \cup \cdots \cup L_k$ where $L_i$ consists of the leaves in $L$ adjacent to $x_i$ for each $1 \le i \le k$. Consider the following embedding of $T_n$ into $\overline{G}$. By Claim 3.2, the tree $T_n - L$ on $n - m + 1 = |X_1| + |Y_1|$ vertices can be embedded into $\overline{G}$ such that $t_1$ and $t_2$ are mapped to vertices in $X_1$. Now let $v_1, v_2, \dots, v_k \in X_1 \cup Y_1$ denote the vertices that $x_1, x_2, \dots, x_k \in V(T_n - L)$ were embedded to. Now greedily select $|L_i|$ distinct neighbors of $v_i$ in $O_1$ in $\overline{G}$ that have not been selected by any $v_j$ where $j < i$ to embed the vertices of $L_i$ to for $1 \le i \le k$. This is possible since $|L_1| + |L_2| + \cdots + |L_k| = m - 1$ and $\overline{d_{O_1}}(v_i) \ge m - 1$ for $1 \le i \le k$, yielding a successful embedding of $T_n$ into $\overline{G}$. Now note that since $t_1$ and $t_2$ were embedded to $X_1$, which is a complete graph in $\overline{G}$, it follows that $\overline{G}$ contains $UC_n$ as a subgraph, which is a contradiction. Therefore $T_n$ must have at most $m - 1$ leaves.

By Lemma 1, if $F = \{t_1, t_2 \}$ then there is a subset $D \subseteq A(T_n)$ such that $F \cap D = \emptyset$, each $v \in D$ satisfies that $d_{T_n}(v) = 2$ and $v$ is not adjacent to a leaf of $T_n$, no two vertices in $D$ have a common neighbor in $T_n$, and $|D| \ge ( n - 8 |L(T_n)| + 8 )/4$ where $|L(T_n)| \le m - 1$ is the number of leaves of $T_n$. Let $L = L(T_n) - F \cap L(T_n)$ where either $F \cap L(T_n) = \{ t_2 \}$ or $F \cap L(T_n) = \emptyset$. Since $T_n$ has at least two leaves when $n \ge 2$, it follows that $|L(T_n)| \ge 2$ and $|L| \ge 1$. Because $n \ge m^2 -m + 1 \ge 8m - 12$ for $m \ge 8$, we have
\begin{align*}
|L| + |D| &\ge |L(T_n)| - 1 + (n - 8|L(T_n)| + 8)/4 \\
&\ge (n - 4m + 8)/4 > m - 1.
\end{align*}
Let $d_1, d_2, \dots, d_k$ be $k$ elements of $D$ where $k = m - 1 - |L| \le m - 2$ and where the two neighbors of $d_i$ in $T_n$ are $x_i$ and $y_i$ for each $1 \le i \le k$. Note that the vertices $x_i$ and $y_i$ for $1 \le i \le k$ are all pairwise distinct.

Now consider the following embedding of $T_n$ into $\overline{G}$. We begin by greedily embedding the vertices $d_i$ to $k$ distinct vertices in $O_1$ and the vertices $x_i$ and $y_i$ to $2k$ distinct vertices in $X_1$. On embedding $d_i$, exactly $i - 1$ vertices have been embedded to $O_1$. Therefore $N_H = |O_1| - (i - 1) = n + m - 2 - (i - 1) \ge n + 1$ vertices of $O_1$ have not been embedded to since $i \le k \le m - 2$. Furthermore, exactly $2(i-1)$ vertices have been embedded to $X_1$ and $N_K = |X_1| - 2(i-1) \ge n - 3m + 5 - 2i$ vertices of $X_1$ have not been embedded to. Now note that since $n \ge m^2 -m + 1 > 10m - 16 \ge 7m - 10 + 3i$ for $m \ge 10$ because $i \le m - 2$,
$$2N_K = 2n - 6m + 10 - 4i \ge n + m - i = N_H + 1.$$
Thus by Claim 4.2 there are vertices $a_i, b_i, c_i$ that have not been embedded to, with $a_i \in O_1$ and $b_i, c_i \in X_1$, to embed $x_i, d_i, y_i$ to, respectively. Note that since $x_i$ and $y_i$ are adjacent to $d_i$ in $T_n$ and $d_i \in A(T_n)$, it follows that $x_i, y_i \in B(T_n)$ for all $1 \le i \le k$. Therefore the $k$ vertices $d_1, d_2, \dots, d_k$ are the vertices of $A(T_n)$ that have so far been embedded to $\overline{G}$. Since $|A(T_n)| \ge n/2$, $n \ge m^2 - m + 1 > 8m$ for $m \ge 9$ and $k + |L| = m - 1$, at least
$$|A(T_n)| - k - |L| \ge n/2 - m + 1 > 3m + 1 \ge |Y_1| + 2$$
vertices of $A(T_n)$ that are not in $L$ have not been embedded to $\overline{G}$. Embed $|Y_1|$ of these vertices arbitrarily to $Y_1$, excluding $t_1$ and $t_2$ if either is in $A(T_n)$ and not in $D$, $\{ x_1, x_2, \dots, x_k\}$ or $\{ y_1, y_2, \dots, y_k\}$. Embed the remaining
$$n - |L| - 3k - |Y_1| = n - m + 1 - |Y_1| - 2k = |X_1| - 2k$$
vertices in $T_n - L$ arbitrarily to $X_1 - \{ b_1, \dots, b_k, c_1, \dots, c_k\}$. Since the vertices $\{d_1, d_2, \dots, d_k \}$ form an independent set in $T_n$, the vertices embedded to $Y_1$ are a subset of $A(T_n)$ and therefore also independent in $T_n$, and each vertex in $X_1$ is adjacent to every vertex of $X_1 \cup Y_1$ other than itself, this is a successful embedding of $T_n - L$ into $\overline{G}$. 

Now note that since no $v \in D$ is adjacent to a leaf in $T_n$, all neighbors of $L$ have been embedded to vertices in $X_1 \cup Y_1$. Apply the same greedy procedure as in the previous embedding to embed the leaves in $L$ to distinct vertices in $O_1 - \{a_1, a_2, \dots, a_k \}$ such that the embedded leaves are adjacent to their embedded neighbors in $X_1 \cup Y_1$. This is possible since $|L| + k = m - 1$ and $\overline{d_{O_1}}(u) \ge m - 1$ for all $u \in X_1 \cup Y_1$. This yields a successful embedding of $T_n$ into $\overline{G}$. Now note that $t_1$ and $t_2$ are necessarily mapped under this embedding to vertices in $X_1$ and therefore are adjacent in $\overline{G}$. Thus $\overline{G}$ contains $UC_n$ as a subgraph, which is a contradiction.
\end{proof}

Now define $X_2, Y_2$ and $U_2$ as in the proof of Theorem 4 with the vertex $u$ guaranteed by Claim 4.3. Also define the set $W = V(G) - (U_1 \cup U_2)$. We next prove a claim that is a weaker analogue of Claim 3.1 for unicyclic graphs. The beginning of the proof uses ideas similar to those used in Claim 3.1.

\begin{claim}
Each vertex $x \in V(T_n)$ adjacent to at most $2m - 2$ leaves of $T_n$ and $\Delta(T_n) < 5n/9$.
\end{claim}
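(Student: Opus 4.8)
The plan is to mirror the structure of the proof of Claim 3.1 but loosen the numerical bounds to accommodate the fact that we are embedding the tree $T_n$ obtained from $UC_n$ and must keep $t_1$ and $t_2$ inside a complete subgraph of $\overline{G}$ at the end of each embedding. First I would establish a bound of the form $\Delta(G) \le n + O(m)$ so that $\delta(\overline{G})$ is large enough to contain a star of close to the right size. The analogue of Burr--type input here is Claim 4.3 together with Lemma 4 (or the known result $R(T_n, mK_2) = n + m - 1$): if some $u \in V(G)$ had degree at least roughly $n + m - 1$, then either the subgraph of $\overline{G}$ induced on $N(u)$ contains $T_n$ — and since $n \ge m^2 - m + 1 \ge 4m - 4$ we can moreover arrange $t_1,t_2$ to land on adjacent vertices of $\overline{G}$, as was done in Claim 4.3 — giving $UC_n \subseteq \overline{G}$, or $N(u)$ contains $mK_2$ in $G$, giving $F_m \subseteq G$. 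Either way we get a contradiction, so $\Delta(G)$ is bounded above by $n + m - 2$ or so, hence $\delta(\overline{G}) \ge n - m$.

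Next I would bound the number of leaves at a single vertex. Suppose $x \in V(T_n)$ is adjacent to at least $2m - 1$ leaves; I would peel off a set $L$ of exactly $m - 1$ (or $2m-2$, tuned to what the argument needs) of these leaves, with $t_2 \notin L$ so that no cycle-vertex of $UC_n$ is disturbed, and note that $T_n - L$ is a tree on enough fewer vertices that $\delta(\overline{G}) \ge |V(T_n - L)| - 1$. Then I would invoke Theorem 3 to find a star $S_n$ in $\overline{G}$ whose center $w$ has $\overline{d}(w) \ge n - 1$, use Lemma 3 to embed $T_n - L$ into $\overline{G}$ with $x \mapsto w$ (and with $t_1, t_2$ wherever the embedding places them — since $T_n - L$ is a tree Lemma 3 gives us the freedom to fix one vertex, and a short separate argument as in Claim 4.3 recovers adjacency of $t_1$ and $t_2$, e.g.\ by instead embedding into the neighborhood of a high-degree vertex), and finally re-attach the $|L| = m-1$ leaves to free neighbors of $w$. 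This produces $UC_n \subseteq \overline{G}$, a contradiction; hence every vertex of $T_n$ has at most $2m - 2$ leaf-neighbors. The factor-of-two slack compared with Claim 3.1 is exactly what lets us absorb the extra vertex $t_2$ we must set aside.

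For the bound $\Delta(T_n) < 5n/9$, I would argue by contradiction exactly as in Claim 3.1: let $v$ be a maximum-degree vertex of $T_n$, write its neighborhood as $K \cup H$ where $K$ are leaf-neighbors and $H$ are non-leaf-neighbors, so $d_{T_n}(v) = |K| + |H|$ and counting $v$, the leaves in $K$, the vertices of $H$, and at least $|H|$ further vertices (one private non-$v$ neighbor of each vertex of $H$, possible since $UC_n$ is still sparse enough — the single cycle costs us at most one such vertex, so use $n \ge |K| + 2|H|$ with a $-1$ or $-2$ correction) gives $|K| + n \ge 1 + 2\,\Delta(T_n) \ge 1 + 10n/9$, forcing $|K| \ge n/9 + O(1) \ge 2m - 1$ since $n \ge m^2 - m + 1$ is at least a suitable multiple of $m$ for $m \ge 18$. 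This contradicts the leaf bound just proved, completing the claim. I expect the main obstacle to be bookkeeping rather than any genuinely new idea: one must be careful at every embedding that $t_1$ and $t_2$ end up on adjacent vertices of $\overline{G}$ (the cleanest route is always to force them into a complete subgraph of $\overline{G}$, as in Claims 4.1 and 4.3), and one must track the slightly worse constants $5n/9$ and $2m-2$ throughout and verify the threshold $n \ge m^2 - m + 1$ with $m \ge 18$ suffices at each inequality.
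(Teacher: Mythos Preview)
Your sketch has a genuine gap at exactly the point you flag as ``bookkeeping'': ensuring that $t_1$ and $t_2$ land on adjacent vertices of $\overline{G}$. Neither of your two steps actually delivers this. For the $\Delta(G)$ bound, Lemma 4 is a black-box Ramsey statement: it only tells you that \emph{some} copy of $T_n$ sits inside $\overline{G}[N(u)]$, with no control whatsoever over the embedding, so there is no way to ``arrange $t_1,t_2$ to land on adjacent vertices''. (In fact the paper neither proves nor uses any $\Delta(G)$ bound in Claim 4.4.) For the leaf bound, Lemma 3 lets you pin exactly one vertex, here $x\mapsto w$; the images of $t_1$ and $t_2$ are then completely uncontrolled, and your appeal to ``a short separate argument as in Claim 4.3'' does not work, since that argument depends on the specific $X_1,Y_1,O_1$ structure and on embedding into $X_1\cup Y_1$, not on a generic Lemma-3 embedding.

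The paper's actual argument is quite different and does real work here. After locating the $S_n$-center $w$ via Theorem 3, it does a case split on whether $w\in U_1$, $w\in U_2$, or $w\in W=V(G)\setminus(U_1\cup U_2)$. If $w\in U_i$, then since $\overline{d}(w)\ge n-1$ one has $\overline{d_{V(G)\setminus U_i}}(w)\ge n-|U_i|\le 2m-2$; map $x\mapsto w$, send $n-|U_i|$ of the $\ge 2m-1$ leaves (avoiding $t_2$) outside $U_i$, and put \emph{all} remaining vertices of $T_n$, in particular $t_1$ and $t_2$, into $U_i\setminus\{w\}$, which is complete in $\overline{G}$. If $w\in W$, one first observes $\overline{d_{U_1}}(w)+\overline{d_{U_2}}(w)\ge n-4m+3$, then carefully distributes the connected components of $T_n-x$ of size $\ge 2$ (together with the one containing $t_2$) between $U_1$ and $U_2$ via their root-neighbors in $\overline{N_{U_i}}(w)$, arranging that $t_1,t_2$ both go into the same $U_i$ (or one of them equals $x$); the leftover singleton leaves are then reattached using $\overline{d}(w)\ge n-1$. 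This component-splitting is the substantive new idea relative to Claim 3.1 and is what your sketch is missing.

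Your treatment of $\Delta(T_n)<5n/9$ is essentially correct in outline; note only that $T_n$ is already a tree (the edge $t_1t_2$ has been removed), so no ``$-1$ or $-2$ correction'' is needed, and the inequality $n/9+1\ge 2m-1$ is precisely where the hypothesis $m\ge 18$ (giving $n\ge m^2-m+1\ge 18m-18$) is used.
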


\begin{proof}
First we show each vertex of $T_n$ is adjacent to at most $2m - 2$ leaves. Assume for the sake of contradiction that some $x \in V(T_n)$ is adjacent to at least $2m - 1$ leaves. By Theorem 3, $\overline{G}$ contains $S_n$ as a subgraph since $G$ does not contain $F_m$. Let $w$ be the center of this $S_n$ subgraph. If $w \in U_1$, consider the following embedding of $UC_n$ to $\overline{G}$. Note that $\overline{d_{V(G) - U_1}}(w) \ge \overline{d}(w) - (|U_1|-1) \ge n - |U_1|$. Now map $x$ to $w$, map $n - |U_1|$ leaves adjacent to $x$ excluding $t_2$ to distinct vertices in $\overline{N_{V(G) - U_1}}(w)$ and map the remaining $|U_1| - 1$ vertices of $T_n$ arbitrarily to $U_1 - \{ w \}$. This is possible because $2m - 2 \ge n - |U_1|$ for $m \ge 2$. Since $U_1$ is complete in $\overline{G}$ and both $t_1$ and $t_2$ are mapped to vertices in $U_1$, this embedding succeeds and yields a contradiction. Therefore we may assume that $w \not \in U_1$ and, by a symmetric argument, that $w \not \in U_2$. Thus we have that $w \in W$.

Now note that if $w \in W$, then since $\overline{d}(w) \ge n - 1$ and $|W| \le 4m - 3$,
$$\overline{d_{U_1}}(w) + \overline{d_{U_2}}(w) \ge \overline{d}(w) - (|W|-1) \ge n - 4m + 3.$$
Since $n - 4m + 3 \ge 7$, $w$ is adjacent to at least four vertices of one of $U_1$ and $U_2$ in $\overline{G}$. Without loss of generality, assume $w$ is adjacent to at least four vertices of $U_1$ in $\overline{G}$ with $\overline{d_{U_1}}(w) \ge 4$.

Let $C_1, C_2, \dots, C_d$ be the connected components of the forest $T_n - x$ satisfying that either $|C_i| \ge 2$ or $t_2 \in C_i$ for each $1 \le i \le d$. Furthermore, let $y_1, y_2, \dots, y_d$ denote the unique neighbors of $x$ in $C_1, C_2, \dots, C_d$, respectively, in $T_n$. Since $x$ is adjacent to at least $2m - 2$ leaves of $T_n$ excluding $t_2$, it follows that
$$|C_1 \cup C_2 \cup \cdots \cup C_d| \le (|V(T_n)| - 1) - (2m - 2) = n - 2m + 1.$$
Furthermore, because there is at most one index $i$ satisfying $t_2 \in C_i$ and $|C_j| \ge 2$ for all $j \neq i$, we have that
$$d \le \frac{1}{2} (1 + |C_1 \cup C_2 \cup \cdots \cup C_d|) \le (n - 2m + 2)/2.$$
Note that since $t_1$ is not a leaf, $t_1 \in C_j$ for some $1 \le j \le d$. Without loss of generality, we may assume that $t_1, t_2 \in C_1 \cup C_2 \cup \{ x \}$. Now consider the following embedding of $UC_n$ to $\overline{G}$. Map $x$ to $w$ and map $y_1, y_2, \dots, y_t$ to distinct vertices in $\overline{N_{U_1}}(w)$ where $t = \overline{d_{U_1}}(w) - 2$. Map $t_1$ and $t_2$ to two elements of $\overline{N_{U_1}}(w)$ that have not yet been embedded to, if either $t_1$ or $t_2$ has not yet been embedded. Furthermore, map $C_1 \cup C_2 \cup \cdots \cup C_t - \{ y_1, y_2, \dots, y_t \} - \{t_1, t_2 \}$ to distinct vertices in $U_1$ that have not yet been embedded to. Note that since $w$ is adjacent to at least four vertices of $U_1$ in $\overline{G}$, it follows that $t \ge 2$ and that $C_1 \cup C_2$ has been embedded to $U_1$. Similarly, map $y_{t+1}, y_{t+2}, \dots, y_d$ to distinct vertices in $\overline{N_{U_2}}(w)$ and $C_{t+1} \cup C_{t+2} \cup \cdots \cup C_d - \{y_{t+1}, y_{t+2}, \dots, y_d\}$ to distinct vertices in $U_2$ that have not yet been embedded to. The mapping described above is possible because
\begin{align*}
d \le (n - 2m + 2)/2 &\le n - 4m + 1 \le \overline{d_{U_1}}(w) + \overline{d_{U_2}}(w) - 2, \\
|C_1 \cup C_2 \cup \cdots \cup C_t| &\le |C_1 \cup C_2 \cup \cdots \cup C_d| \le n - 2m + 1 < |U_1|,\\
|C_{t+1} \cup C_{t+2} \cup \cdots \cup C_d| &\le |C_1 \cup C_2 \cup \cdots \cup C_d| \le n - 2m + 1 < |U_2|,
\end{align*}
since $n \ge m^2 - m + 1 \ge 6m$ for $m \ge 7$. Now embed the remaining vertices of $T_n$, all of which are leaves adjacent to $x$ to distinct neighbors of $w$ in $\overline{G}$ that have not already been embedded to. This is possible because $\overline{d}(w) \ge n - 1$. This yields a successful embedding of $T_n$ to $\overline{G}$ since $U_1$ and $U_2$ induce complete subgraphs of $\overline{G}$. Now note that if $x \in \{ t_1, t_2 \}$, the other vertex of $\{ t_1, t_2 \}$ has been embedded to a neighbor of $w$ in $\overline{G}$. If not, then $t_1$ and $t_2$ were both embedded to vertices of $U_1$. In either case, the vertices $t_1$ and $t_2$ were mapped to are adjacent in $\overline{G}$, which implies that $UC_n$ is a subgraph of $\overline{G}$. This is a contradiction and therefore each vertex of $T_n$ is adjacent to at most $2m - 2$ leaves.

Now we show that $\Delta(T_n) < 5n/9$. Assume for contradiction that $\Delta(T_n) \ge 5n/9$ and let $x$ be a vertex of largest degree in $T_n$ where $d_{T_n}(x) \ge 5n/9$. Let $K \subseteq L(T_n)$ be the set of leaves adjacent to $x$ and let $H \subseteq V(T_n)$ be the set of vertices of degree at least two adjacent to $x$. It follows that $d_{T_n}(x) = |K| + |H|$ and $n = |V(T_n)| \ge 1 + |K| + 2|H|$ since $T_n$ is a tree. Therefore we have that
$$|K| + n \ge 1 + 2|K| + 2|H| = 1 + 2d_{T_n}(u) \ge 1 + 10n/9,$$
which implies that $|K| \ge 1 + n/9 \ge 2m - 1$ since $n \ge m^2 -m + 1 \ge 18m - 18$ for $m \ge 18$. Applying the previous argument now yields a contradiction.
\end{proof}

We now prove a claim specializing the result in Lemma 2 as needed to account for $t_1$ and $t_2$. Claim 4.5 has no analogue in the proof of Theorem 4 and will be crucial in proving Claims 4.7 and 4.8.

\begin{claim}
There is a vertex $x \in V(T_n)$ such that the vertices of the forest $T_n - x$ can be partitioned into two sets $H$ and $J$ such that there are no edges between $H$ and $J$ in $T_n$, we have the inequalities
$$2m - 2 \le |H|, \, |J| \le n - 2m + 1 \quad \text{and} \quad d_{H}(x) \le 2m - 2$$
and one of the following holds:
\begin{enumerate}
\item $x \in \{ t_1, t_2 \}$;
\item $x$ is adjacent to neither $t_1$ nor $t_2$;
\item $\{ t_1, t_2 \} \subseteq H$; or
\item $\{ t_1, t_2 \} \subseteq J$.
\end{enumerate}
\end{claim}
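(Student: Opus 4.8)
The plan is to prove a unicyclic analogue of Lemma~2 by combining a centroid-type argument with the observation that, once a separating vertex is fixed, the components of the resulting forest can be grouped with a lot of freedom. Write the unique $t_1$--$t_2$ path of $T_n$ as $p_0 p_1 \cdots p_\ell$ with $p_0 = t_1$, $p_\ell = t_2$ and $\ell \ge 2$, since $c(UC_n)$ has length $\ell + 1 \ge 3$. Set
\[
\mathcal{X} = \{\, x \in V(T_n) : \text{every component of } T_n - x \text{ has at most } n - 2m + 1 \text{ vertices} \,\}.
\]
I would use two standard facts: $\mathcal{X}$ is nonempty and induces a subtree of $T_n$ (the largest component size of $T_n - x$ is unimodal on $T_n$ and is minimized along $\mathcal{X}$, using $n - 2m + 1 \ge n/2$), and for every $v \notin \mathcal{X}$ the unique component of $T_n - v$ with more than $n - 2m + 1$ vertices contains all of $\mathcal{X}$. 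The first step is a \emph{partition observation}: if $x \in \mathcal{X}$ and $\mathcal{S}$ is a set of components of $T_n - x$ with $|\bigcup \mathcal{S}| \le n - 2m + 1$, then $V(T_n - x)$ admits a partition into $H$ and $J$ as required with $\bigcup \mathcal{S} \subseteq J$. Indeed, list the components outside $\mathcal{S}$ as $C_1, \dots, C_r$ in non-increasing order of size and let $H = C_1 \cup \dots \cup C_s$ where $s$ is minimal with $|C_1| + \dots + |C_s| \ge 2m - 2$: the components outside $\mathcal{S}$ total at least $2m - 2$ so such an $s$ exists; each $C_i$ has at most $n - 2m + 1 \ge 4m - 4$ vertices (here $n \ge m^2 - m + 1 \ge 6m - 5$), so $|H| \le n - 2m + 1$; the components before $C_s$ total fewer than $2m - 2$, so $s \le 2m - 2$ and $d_H(x) = s \le 2m - 2$; and $|J| = n - 1 - |H| \in [2m - 2, n - 2m + 1]$.

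Given this, I would first dispose of the case where $\mathcal{X}$ contains a vertex $x \notin \{p_1, p_{\ell - 1}\}$. If $x \in \{t_1, t_2\}$, condition (1) holds and the partition observation with $\mathcal{S} = \emptyset$ finishes. If $x = p_i$ with $2 \le i \le \ell - 2$, then $x$ is adjacent to neither $t_1$ nor $t_2$, so condition (2) holds, and again $\mathcal{S} = \emptyset$ works. Otherwise $x$ is not on the path $P$, so $T_n - x$ has a single component $C$ containing both $t_1$ and $t_2$, and the partition observation with $\mathcal{S} = \{C\}$ puts $t_1, t_2 \in J$, giving condition (4).

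It then remains to handle $\mathcal{X} \subseteq \{p_1, p_{\ell - 1}\}$. Since $\mathcal{X}$ is nonempty and connected, either $\mathcal{X} = \{p_1\}$, or $\mathcal{X} = \{p_{\ell - 1}\}$, or $\mathcal{X} = \{p_1, p_{\ell - 1}\}$ with $p_1 p_{\ell - 1} \in E(T_n)$, the last case forcing $\ell = 3$. Consider $\mathcal{X} = \{p_1\}$ (the case $\{p_{\ell - 1}\}$ being symmetric after reversing $P$). Since $t_1 = p_0 \notin \mathcal{X}$ and $p_2 \notin \mathcal{X}$, the big component of $T_n - t_1$ and the big component of $T_n - p_2$ each contain $p_1$; these have sizes $n - |A|$ and $n - |B|$, where $A$ and $B$ are the components of $T_n - p_1$ containing $t_1$ and $t_2$ respectively, so $|A|, |B| \le 2m - 2$. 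Hence the components of $T_n - p_1$ other than $A$ and $B$ have at least $n - 1 - 2(2m - 2) \ge 2m - 2$ vertices in total, and the partition observation with $x = p_1$ and $\mathcal{S} = \{A, B\}$ yields condition (4). When $\mathcal{X} = \{p_1, p_2\}$, the same argument at $v = t_1$ and $v = t_2$ shows that the component of $T_n - p_1$ containing $t_1$ and the component of $T_n - p_2$ containing $t_2$ each have at most $2m - 2$ vertices; writing $\rho_i$ for the number of vertices in components of $T_n - p_i$ other than the two containing $t_1$ and $t_2$, a direct count (decomposing the component of $T_n - p_1$ that contains $p_2$ through the vertex $p_2$) gives $\rho_1 + \rho_2 \ge n - 4m + 2 \ge 4m - 4$, so $\rho_i \ge 2m - 2$ for some $i$, and the partition observation at $x = p_i$ again gives condition (4).

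The main obstacle is exactly this final case: the only separating vertices splitting $T_n$ into balanced pieces lie adjacent to $t_1$ or $t_2$, so no such vertex separates $t_1$ from $t_2$ while keeping both sides balanced. The resolution is to show that in this situation the parts of the tree on the $t_1$-side and on the $t_2$-side of such a vertex are small (at most $2m - 2$ vertices), freeing almost all of $T_n$ to be redistributed so that $t_1$ and $t_2$ can be placed together; making the ``big component points toward $\mathcal{X}$'' fact precise and bookkeeping the vertex counts so that the $2m - 2$ bounds survive for $n \ge m^2 - m + 1$ is the technical heart of the argument, and I do not expect it to use the bound $\Delta(T_n) < 5n/9$ from Claim~4.4.
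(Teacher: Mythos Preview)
Your argument is correct and follows a genuinely different route from the paper. The paper begins with the single centroid-type vertex $x$ supplied by Lemma~2, forms a candidate pair $(H,J)$ by accumulating components of $T_n-x$ until $|H|\ge 2m-2$, and then, when none of (1)--(4) holds, enters a somewhat intricate case analysis on $|C_i\cup C_j|$ (the two components of $T_n-x$ containing $t_1$ and $t_2$), in the course of which the separating vertex is sometimes moved to $t_1$, to $t_2$, or to the neighbor $y$ of $x$ along the path toward $t_2$. You instead work with the whole set $\mathcal{X}$ of balanced separators, use that $\mathcal{X}$ is a nonempty subtree, and locate $\mathcal{X}$ relative to the $t_1$--$t_2$ path: if $\mathcal{X}$ reaches beyond $\{p_1,p_{\ell-1}\}$ the partition observation finishes in one step; otherwise the ``big component points into $\mathcal{X}$'' fact forces the $t_1$-side and $t_2$-side components at $p_1$ (or at $p_1,p_2$ when $\ell=3$) to be small, and one more use of the partition observation with $\mathcal{S}=\{A,B\}$ yields condition~(4). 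The reusable partition observation (together with the non-increasing ordering, which gives $|H|<4m-4\le n-2m+1$ when $s\ge 2$) replaces several ad-hoc constructions in the paper, and your numerical requirements are in fact slightly milder: the tightest is $n\ge 8m-6$, valid already for $m\ge 9$, whereas the paper's argument invokes $n\ge 12m-11$ at one point. Both proofs are purely tree-combinatorial and, as you anticipated, neither uses the degree bound $\Delta(T_n)<5n/9$ from Claim~4.4.
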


\begin{proof}
By Lemma 2, there is a vertex $x$ of $T_n$ such that $T_n - x$ can be partitioned into two sets $P$ and $Q$ such that there are no edges between $P$ and $Q$ and $(n-1)/3 \le |P|, \, |Q| \le 2(n-1)/3$. Let $C_1, C_2, \dots, C_d$ be the connected components of the forest $T_n - x$. Since each of $P$ and $Q$ is a union of connected components $C_i$, it follows that $|C_i| \le 2(n-1)/3 \le n - 2m + 1$ for all $1 \le i \le d$, since $n \ge m^2 -m + 1 \ge 6m - 5$ for $m \ge 6$. Furthermore, we may assume without loss of generality that $P = C_1 \cup C_2 \cup \dots \cup C_t$ and $Q = C_{t+1} \cup C_{t+2} \cup \dots \cup C_d$.

Let $H = C_1 \cup C_2 \cup \cdots \cup C_p$ where $p$ is the minimum positive integer such that $|C_1| + |C_2| + \cdots + |C_p| \ge 2m - 2$. Note that since $|P| \ge (n-1)/3 \ge 2m - 2$, it follows that $p \le t$. Furthermore, because $|C_i| \ge 1$ for all $1 \le i \le d$ it follows that $d_H(x) = p \le 2m - 2$. Set $J = C_{p+1} \cup C_{p+2} \cup \cdots \cup C_d$. For these sets $H$ and $J$ to not have the desired properties (1)--(4), it must hold that $x \not \in \{t_1, t_2 \}$, $x$ is adjacent to at least one of $t_1$ and $t_2$ and exactly one of $t_1$ and $t_2$ is in each of $H$ and $J$.

Assume that $t_1 \in C_i$ and $t_2 \in C_j$ where $1 \le i \le p$ and $p + 1 \le j \le d$. The case in which $p + 1 \le i \le d$ and $1 \le j \le p$ can be handled by a symmetric argument. If $2m - 2 \le |C_i \cup C_j| \le n - 2m + 1$, then setting $H = C_i \cup C_j$ and $J = V(T_n - x) - H$ yields sets satisfying the desired properties since $d_H(x) = 2$ and $m \ge 2$. Now consider the case in which $|C_i \cup C_j| < 2m - 2$. This yields the sequence of inequalities,
\begin{align*}
2m - 2 &\le |C_1| + |C_2| + \cdots + |C_p| \\
&\le |C_1| + |C_2| + \cdots + |C_p| + |C_i| + |C_j| \\
&\le |P| + |C_i \cup C_j| < 2(n-1)/3 + 2m - 2 \le n - 2m + 1
\end{align*}
since $n \ge m^2 - m + 1 \ge 12m - 11$ for $m \ge 12$. Note that $C_1 \cup C_2 \cup \cdots \cup C_p \subseteq P$ since $p \le t$. If $|C_k| = |C_l| = 1$ for some $1 \le k < l \le p$ then set $H$ to be the union of all $C_q$ where $1 \le q \le p$ and $q \not \in \{ k, l \}$ with $C_i$ and $C_j$. Also set $J = V(T_n - x) - H$. In this case, we have that since $|C_i|, |C_j| \ge |C_k| = |C_l| = 1$,
\begin{align}
2m -2 &\le |C_1| + |C_2| + \cdots + |C_p| \le |H| \\
&\le |C_1| + |C_2| + \cdots + |C_p| + |C_i| + |C_j| \le n - 2m + 1.
\end{align}
It also follows that $d_H(x) \le p \le 2m - 2$. Note that $d_H(x) < p$ if either $i$ or $j$ is at most $p$ and not in $\{k, l\}$. This yields valid sets $H$ and $J$. If no such $k$ and $l$ exist, set $H = C_1 \cup C_2 \cup \cdots \cup C_p \cup C_i \cup C_j$ and again set $J = V(T_n - x) - H$. Since no such $k$ and $l$ exist, $|C_k| = 1$ for at most one $1 \le k \le p$ and $|C_l| \ge 2$ for all $1 \le l \le p$ with $l \neq k$. Since $p$ is the minimal $p$ for which $|C_1| + |C_2| + \cdots + |C_p| \ge 2m - 2$, it necessarily follows that $p \le m - 1$. Therefore we have that $d_H(x) \le p + 2 \le 2m - 2$. In this case, we also have exactly the sequence of inequalities (4.1) and (4.2). This again yields valid sets $H$ and $J$, showing that the inequality $|C_i \cup C_j| < 2m - 2$ cannot hold.

Therefore we may assume that $|C_i \cup C_j| \ge n - 2m + 2$. We consider the case in which $x$ is adjacent to $t_1$. The case in which $x$ is adjacent to $t_2$ follows by a symmetric argument, because we do not use the fact that $t_1$ is not a leaf. Note that either $|C_i| \ge n/2 - m + 1$ or $|C_j| \ge n/2 - m + 1$. If $|C_i| \ge n/2 - m + 1$, consider the connected components of the forest $T_n - t_1$, some subset of which has as its union $C_i - \{ t_1 \}$. Note that $C_i \subseteq P$ since $i \le p \le t$. Setting $H$ to be the smallest union of connected components of $T_n - t_1$ that are subsets of $C_i$ satisfying that $|H| \ge 2m - 2$ and setting $J = V(T_n - t_1) - H$ yields valid sets by the same argument as in the first construction described above because
$$(n-1)/3 \le n/2 - m \le |C_i - \{ t_1 \}| \le |P| \le 2(n-1)/3.$$

If $|C_j| \ge n/2 - m + 1$, let $y$ be the unique neighbor of $x$ in $C_j$. Note that we may assume that $y \neq t_2$ since otherwise the construction above replacing $t_1$ with $t_2$ yields valid sets $H$ and $J$. Consider the connected components of the forest $T_n - y$ and let $K$ be the connected component of $t_2$. Let $M$ be the union of $\{ x \}$ and all $C_q$ such that $q \neq j$. Note that $M$ is a connected component of $T_n - y$ and that $t_1 \in M$. If $|K| \le |C_j| - 2m + 2$, then set $H = M \cup K$ and $J = C_j - (\{ y \} \cup K)$. Note that this implies $d_H(y) = 2$. It follows that $|H| \le n - 2m + 1$ and that $|J| < |C_j| \le \max\{|P|, |Q| \} \le 2(n-1)/3 \le n - 2m + 1$ since $n \ge m^2 -m +1 \ge 6m - 5$ for $m \ge 6$. Note that $t_1 \in M$ and $t_2 \in K$, implying that these are valid sets $H$ and $J$. Thus we may assume that $|K| \ge |C_j| - 2m + 3 \ge n/2 - 3m + 4$. 

Note that $y$ is not adjacent to $t_1$ since otherwise $y, x$ and $t_1$ would form a triangle in $T_n$. Now if $y$ is not adjacent to $t_2$ then $y$ satisfies property (2). In this case, define $H$ to be the smallest union of connected components of $T_n - y$ that are subsets of $C_j$ satisfying that $|H| \ge 2m - 2$ and set $J = V(T_n - y) - H$, which yields valid sets using the argument above and the fact that $C_j \subseteq Q$ implies the inequalities
$$(n-1)/3 \le n/2 - m \le |C_j - \{ y \}| \le |Q| \le 2(n-1)/3.$$
If $y$ is adjacent to $t_2$, then consider the connected components of $T_n - t_2$. Note that there is one connected component that contains $y$ and the others are all subsets of $K - \{ t_2 \}$. Let $H$ be the smallest union of connected components of $T_n - t_2$ that are subsets of $K - \{t_2 \}$ satisfying that $|H| \ge 2m - 2$ and set $J = V(T_n - t_2)$. This yields valid sets by combining the previous argument above with the inequalities
$$|K| - 1 \le |C_j| - 1 \le 2(n-1)/3 - 1 \le n - 2m +1 \quad \text{and}$$
$$|K| - 1 \ge n/2 -3m + 3 \ge 2m - 2,$$
which hold since $n \ge m^2 - m + 1 \ge 10m - 10$ for $m \ge 10$. This completes the proof of the claim.
\end{proof}

We now consider the case in which there is a matching on two edges in $\overline{G}$ between $U_1$ and $U_2$. We consider the case in which this matching does not exist in Claim 4.8. The next three claims are similar to Claims 3.4, 3.5 and 3.6, they show that $U_1$ and $U_2$ induce a nearly complete bipartite subgraph of $G$ and that each vertex in $W$ has few neighbors in a large subset of one of $U_1$ or $U_2$.

\begin{claim}
If there are two disjoint edges in $\overline{G}$ between $U_1$ and $U_2$, then for each $w \in U_1$ we have $\overline{d_{U_2}}(w) \le 2m - 2$, and for each $w \in U_2$ we have $\overline{d_{U_1}}(w) \le 2m - 2$.
\end{claim}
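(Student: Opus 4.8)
The plan is to argue by contradiction, following the proof of Claim 3.4 but replacing Lemma 2 by Claim 4.5 so that the images of $t_1$ and $t_2$ can be controlled. Assume that some $w \in U_1$ has $\overline{d_{U_2}}(w) \ge 2m - 1$; the case $w \in U_2$ is entirely symmetric since the hypothesis is symmetric in $U_1$ and $U_2$. Apply Claim 4.5 to obtain a vertex $x \in V(T_n)$ and a partition $V(T_n - x) = H \cup J$ with no edges between $H$ and $J$ in $T_n$, with $2m - 2 \le |H|, |J| \le n - 2m + 1$ and $p := d_H(x) \le 2m - 2$, satisfying one of the properties (1)--(4). Since $|U_1|, |U_2| \ge n - 2m + 2$, both $H$ and $J$ fit inside each of $U_1$ and $U_2$ with room to spare, and since $\overline{d_{U_2}}(w) \ge 2m - 1 > p$ the $p$ neighbours of $x$ in $H$ can be mapped to distinct neighbours of $w$ in $U_2$ while leaving at least one neighbour of $w$ in $U_2$ unused.

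Now build an embedding of $T_n$ into $\overline{G}$: send $x$ to $w$; send the $p$ neighbours of $x$ in $H$ to distinct neighbours of $w$ in $U_2$; extend over $H$ arbitrarily into the complete subgraph $U_2$ of $\overline{G}$; and embed $J$ arbitrarily into the complete subgraph $U_1$ of $\overline{G}$, choosing the images of the neighbours of $x$ in $J$ among $U_1 - \{w\}$ (any choice works since $U_1$ is complete). This is a valid embedding of $T_n$, so it remains only to arrange that the images of $t_1$ and $t_2$ are adjacent in $\overline{G}$, which yields a copy of $UC_n$ in $\overline{G}$ and the desired contradiction. If $\{t_1, t_2\} \subseteq H$ or $\{t_1, t_2\} \subseteq J$, both images lie in $U_2$ or both in $U_1$ and we are done. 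If $x \in \{t_1, t_2\}$, then since $t_1 t_2 \notin E(T_n)$ the other endpoint is not a neighbour of $x$, so if it lies in $J$ its image lies in $U_1 \ni w$, while if it lies in $H$ it is not one of the $p$ designated neighbours of $x$ and hence can be routed to a neighbour of $w$ in $U_2$ not used for a root; either way its image is adjacent to $w$. If $x$ is adjacent to neither $t_1$ nor $t_2$ and $t_1, t_2$ lie in the same component of $T_n - x$, that component lies entirely in $H$ or entirely in $J$ and we are again done.

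The one remaining situation is when $x$ is adjacent to neither $t_1$ nor $t_2$ and these lie in different components of $T_n - x$, so that, say, $t_1$ is embedded into $U_2$ and $t_2$ into $U_1$; as neither is a neighbour of $x$, both are \emph{free} vertices whose images may be chosen to be any hitherto unused vertices of $U_2$ and $U_1$. This is exactly where the hypothesis is invoked: two disjoint $U_1$--$U_2$ edges of $\overline{G}$ use four distinct vertices, at most one of which is $w$, so one of them is an edge $ab$ with $a \in U_1 - \{w\}$ and $b \in U_2$; the room afforded by $|U_1|, |U_2| \ge n - 2m + 2$ and by $|\overline{N_{U_2}}(w)| \ge 2m - 1 > p$ lets us keep $a$ and $b$ unused until the end, and then setting $t_1 \mapsto b$ and $t_2 \mapsto a$ realises the edge $t_1 t_2$ in $\overline{G}$. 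I expect this last sub-case, and the bookkeeping needed to free up $a$ and $b$ without disturbing the rest of the embedding, to be the main obstacle; the numerical inequalities ($|H|, |J| \le n - 2m + 1 < |U_1|, |U_2|$ and $p \le 2m - 2 < 2m - 1 \le \overline{d_{U_2}}(w)$, and similar) are all immediate from $n \ge m^2 - m + 1$.
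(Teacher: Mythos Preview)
Your approach is correct and matches the paper's: both invoke Claim 4.5 in place of Lemma 2, select a single $\overline{G}$-edge between $U_1\setminus\{w\}$ and $U_2$ (guaranteed by the two-disjoint-edges hypothesis), and then split into the cases (1)--(4) to place $t_1,t_2$ so that their images are adjacent. The bookkeeping you flag as the ``main obstacle'' is exactly as routine as you suspect---the inequalities $|H|,|J|\le n-2m+1<|U_i|$ and $p\le 2m-2<\overline{d_{U_2}}(w)$ leave the one extra slot needed to reserve $a$ and $b$, and since $t_1,t_2$ are not neighbours of $x$ in the remaining sub-case, their images within $U_2$ and $U_1$ are genuinely free; this is precisely how the paper handles it.
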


\begin{proof}
Assume for contradiction that for some $w \in U_1$, it holds that $\overline{d_{U_2}}(w) \ge 2m - 1$. Note that there is a vertex $x \in V(T_n)$ such that the vertices of the forest $T_n - x$ can be partitioned into two sets $H$ and $J$ satisfying the conditions described in Claim 4.5. Since there are two disjoint edges in $\overline{G}$ between $U_1$ and $U_2$, there must be some edge $yz \in E(\overline{G})$ such that $y \in U_1$, $z \in U_2$ and $y \neq w$.

Consider the case in which $x \not \in \{t_1, t_2 \}$ and exactly one of $t_1$ and $t_2$ is in each of the sets $H$ and $J$. For now assume that $t_1 \in H$ and $t_2 \in J$. The case in which $t_1 \in J$ and $t_2 \in H$ can be handled with a symmetric argument. Claim 4.5 implies that $x$ is adjacent to neither $t_1$ nor $t_2$ in $T_n$. Now consider the following embedding of $UC_n$ into $\overline{G}$. Map $x$ to $w$, $t_1$ to $z$ and $t_2$ to $y$. Map the $d_H(x) \le 2m - 2$ neighbors of $x$ in $H$ to distinct vertices in $\overline{N_{U_2}}(w)$ excluding $y$ if $y \in \overline{N_{U_2}}(w)$. Note that these neighbors do not include $t_2$. Map the remaining $|H| - d_H(x) - 1$ vertices of $H$ to distinct vertices in $U_2$ that have not already been embedded to. Since $|H| \le n - 2m + 1 < |U_2|$, this is possible. Map $|J| - 1$ vertices of $J - \{ t_2 \}$ to distinct vertices in $U_1 - \{w, y \}$. This is possible because $|J| -1 \le n - 2m \le |U_1| - 2$. This yields a successful embedding of $T_n$ to $\overline{G}$ and, since $yz \in E(\overline{G})$, this also yields a successful embedding of $UC_n$ to $\overline{G}$.

Now consider the case in which $x \in \{ t_1, t_2 \}$. If $\{ t_1, t_2 \} - \{ x \}$ is in $H$, then map $x$ to $w$, $J$ to a subset of $U_1 - \{ w \}$, the $d_H(x) \le 2m - 2$ neighbors of $x$ in $H$ and $\{t_1, t_2\} - \{ x \}$ to distinct vertices in $\overline{N_{U_2}}(w)$ and the remainder of $H$ to vertices in $U_2$ that have not been embedded to. This embedding is valid by the same inequalities as above. Furthermore, this ensures that $t_1$ and $t_2$ are adjacent in $\overline{G}$. Similarly if $x \in \{ t_1, t_2 \}$ and $\{ t_1, t_2 \} - \{ x \}$ is in $J$ or if $\{t_1, t_2 \}$ is a subset of either $H$ or $J$, then this same embedding without mapping $\{t_1, t_2\} - \{ x \}$ to a vertex in $\overline{N_{U_2}}(w)$ succeeds. This again yields a successful embedding of $UC_n$ to $\overline{G}$, which is a contradiction.

A symmetric argument shows that for each $w \in U_2$ we have $\overline{d_{U_1}}(w) \le 2m - 2$.
\end{proof}

\begin{claim}
If there is an edge in $\overline{G}$ between $U_1$ and $U_2$, then for each $w \in W$ either $\overline{d_{U_1}} (w) < 1 + 5n/18$ or $\overline{d_{U_2}}(w) < 1 + 5n/18$.
\end{claim}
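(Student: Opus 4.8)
The plan is to imitate the proof of Claim 3.5, with Lemma 2 replaced by Claim 4.5 so that $t_1$ and $t_2$ can be controlled, and with the hypothesised edge of $\overline{G}$ between $U_1$ and $U_2$ used to reconnect $t_1$ and $t_2$ into a cycle. Assume for contradiction that some $w \in W$ has $\overline{d_{U_1}}(w) \ge 1 + 5n/18$ and $\overline{d_{U_2}}(w) \ge 1 + 5n/18$, and fix $yz \in E(\overline{G})$ with $y \in U_1$, $z \in U_2$; note $y, z \ne w$ since $w \notin U_1 \cup U_2$. Apply Claim 4.5 to obtain a vertex $x \in V(T_n)$ and a partition $V(T_n - x) = H \cup J$ with no $T_n$-edges between $H$ and $J$, with $2m - 2 \le |H|, |J| \le n - 2m + 1$, $d_H(x) \le 2m - 2$, and one of the four structural conditions (1)--(4) on $t_1, t_2$ holding. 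I would pair $H$ with $U_1$ and $J$ with $U_2$.

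First I would arrange $d_H(x) \le \overline{d_{U_1}}(w) - 1$ and $d_J(x) \le \overline{d_{U_2}}(w) - 2$, leaving room to place the images of $t_1$ and $t_2$ later. The first inequality already holds, since $d_H(x) \le 2m - 2 \le 5n/18 \le \overline{d_{U_1}}(w) - 1$ for $n \ge m^2 - m + 1$ with $m \ge 18$. If $d_J(x) > \overline{d_{U_2}}(w) - 2$, I would move $d_J(x) - (\overline{d_{U_2}}(w) - 2)$ of the connected components of $T_n - x$ comprising $J$ over to $H$, choosing them to avoid the at most two components meeting $\{t_1, t_2\}$ --- possible because $\overline{d_{U_2}}(w) \ge 4$. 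For the new sets $H', J'$, conditions (1) and (2) are unaffected, while in case (3) one has $\{t_1, t_2\} \subseteq H \subseteq H'$ and in case (4) one has $\{t_1, t_2\} \subseteq J'$; the size bounds $2m - 2 \le |H'|, |J'| \le n - 2m + 1$ persist; and by $\Delta(T_n) < 5n/9$ from Claim 4.4,
$$d_{H'}(x) = d_{T_n}(x) - d_{J'}(x) < \tfrac{5n}{9} - \big(\overline{d_{U_2}}(w) - 2\big) \le \tfrac{5n}{18} + 1 \le \overline{d_{U_1}}(w),$$
so $d_{H'}(x) \le \overline{d_{U_1}}(w) - 1$ as well. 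Rename $H', J'$ back to $H, J$.

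Now I would embed $UC_n$ into $\overline{G}$: send $x$ to $w$; send $N_H(x)$ injectively into $\overline{N_{U_1}}(w)$ and $N_J(x)$ injectively into $\overline{N_{U_2}}(w)$, keeping $y$, respectively $z$, unused if it happens to lie there; send the remaining vertices of $H$ into $U_1$ and of $J$ into $U_2$. This embeds the tree $T_n$, since $U_1$ and $U_2$ induce cliques of $\overline{G}$, there are no $T_n$-edges between $H$ and $J$, and $\overline{N_{U_1}}(w)$, $\overline{N_{U_2}}(w)$ are by definition the $\overline{G}$-neighbourhoods of $w$ inside $U_1, U_2$; the degree and size bounds above make it fit. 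It remains to route $t_1$ and $t_2$ to adjacent images. In case (1), say $x = t_1$; then $t_2$ lies in $H$ or $J$ and is not adjacent to $x$ in $T_n$, so I send it to a spare vertex of $\overline{N_{U_1}}(w)$ or $\overline{N_{U_2}}(w)$, which is adjacent in $\overline{G}$ to $w$, the image of $t_1$. In cases (3) and (4), $\{t_1, t_2\}$ lies entirely within $H$ or within $J$, hence maps into a clique $U_1$ or $U_2$. In case (2), $x$ is adjacent to neither $t_1$ nor $t_2$; if both lie on the same side, finish as in (3)/(4), and otherwise send $t_1$ to $y$ and $t_2$ to $z$ among the ``remaining'' vertices, which is legal thanks to the reservations above. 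In every subcase $\overline{G} \supseteq UC_n$, a contradiction. Pairing $H$ with $U_2$ instead of $U_1$ yields the statement with $U_1$ and $U_2$ exchanged.

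The main obstacle I expect is the simultaneous bookkeeping for $t_1$ and $t_2$: the component shift used to tame $d_J(x)$ must not destroy whichever structural alternative Claim 4.5 provides, and at the same time there must be enough surplus in $\overline{N_{U_1}}(w)$ and $\overline{N_{U_2}}(w)$ to place, or to avoid, the images of $t_1$ and $t_2$ in all four cases. Balancing these two demands against the bound $\Delta(T_n) < 5n/9$ is exactly what pins the threshold at $1 + 5n/18$ rather than $5n/18$.
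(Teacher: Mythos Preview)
Your proposal is correct and follows essentially the same approach as the paper's proof: both assume $\overline{d_{U_i}}(w) \ge 1 + 5n/18$ for $i=1,2$, invoke Claim~4.5 for the partition $H\cup J$, shift components of $J$ into $H$ (avoiding those meeting $\{t_1,t_2\}$) until $d_J(x)$ is small enough, use $\Delta(T_n)<5n/9$ to bound $d_{H'}(x)$, and then embed with the same case split on conditions (1)--(4), using the edge $yz$ only in the ``split'' subcase of (2). The only cosmetic differences are that you target $d_J(x)\le\overline{d_{U_2}}(w)-2$ rather than $-1$ and describe the $t_1,t_2$ routing after the generic embedding rather than before; neither affects the argument, and your assertion that the size bounds persist is true (since $|J'|\ge d_{J'}(x)\ge 5n/18-1\ge 2m-2$).
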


\begin{proof}
Let $yz \in E(\overline{G})$ where $y \in U_1$ and $z \in U_2$. Assume for contradiction that for some $w \in W$ it follows that $\overline{d_{U_1}} (w) \ge 1 + 5n/18$ and $\overline{d_{U_2}}(w) \ge 1 + 5n/18$. There is a vertex $x \in V(T_n)$ such that the vertices of the forest $T_n - x$ can be partitioned into two sets $H$ and $J$ satisfying the conditions described in Claim 4.5.

First we consider the case in which $d_{J}(x) \le \overline{d_{U_2}}(w) - 1$. Note that $d_{H}(x) \le 2m - 2 \le 5n/18 \le \overline{d_{U_1}}(w) - 1$ since $n \ge m^2 - m + 1 \ge 36(m - 1)/5$ for $m \ge 8$. Now consider the following embedding of $UC_n$ to $\overline{G}$. Map $x$ to $w$ and do one of the following depending which of (1)--(4) is true from Claim 4.5:
\begin{itemize}
\item if $x \in \{ t_1, t_2 \}$ and $\{ t_1, t_2 \} - \{ x \}$ is in $H$, then map $\{ t_1, t_2 \} - \{ x \}$ to a vertex in $\overline{N_{U_1}}(w)$;
\item if $x \in \{ t_1, t_2 \}$ and $\{ t_1, t_2 \} - \{ x \}$ is in $J$, then map $\{ t_1, t_2 \} - \{ x \}$ to a vertex in $\overline{N_{U_2}}(w)$; or
\item if $x \not \in \{ t_1, t_2 \}$ and exactly one of $t_1$ and $t_2$ is in each of $H$ and $J$, then map $\{ t_1, t_2 \} \cap H$ to $y$ and $\{ t_1, t_2 \} \cap J$ to $z$.
\end{itemize}
Note that in the last case, Claim 4.5 guarantees that $x$ is adjacent to neither $t_1$ nor $t_2$. After this step, at most one vertex has been mapped to each of $U_1$ and $U_2$. Thus $x$ has either $d_H(x)$ or $d_H(x) - 1$ neighbors in $H$ and either $d_J(x)$ or $d_J(x) - 1$ neighbors in $J$ that have not already been embedded to $\overline{G}$. Now map the remaining neighbors of $x$ in $H$ to vertices in $\overline{N_{U_1}}(w)$ that have not already been embedded to. Similarly map the neighbors of $x$ in $J$ to vertices in $\overline{N_{U_2}}(w)$ that have not already been embedded to. This is possible because $d_H(x) \le \overline{d_{U_1}}(w) - 1$ and $d_{J}(x) \le \overline{d_{U_2}}(w) - 1$. Map the remaining vertices in $H$ to distinct vertices of $U_1$ that have not yet been embedded to and map the remaining vertices in $J$ to distinct vertices of $U_2$ that have not yet been embedded to. This is possible because $|H| \le n - 2m + 1 < |U_1|$ and $|J| \le n - 2m + 1 < |U_2|$. Since $t_1$ and $t_2$ are either mapped to $y$ and $z$, two vertices in $U_1$, two vertices in $U_2$, $x$ and a vertex in $\overline{N_{U_1}}(w)$ or $x$ and a vertex in $\overline{N_{U_2}}(w)$, it follows that $t_1$ and $t_2$ are adjacent in $\overline{G}$. Therefore $UC_n$ is a subgraph of $\overline{G}$, which is a contradiction.

Now suppose that $d_J(x) \ge \overline{d_{U_2}}(w) \ge 1 + 5n/18$. Note that $J$ is the union of $d_J(x)$ connected components of $T_n - x$, at least $d_J(x) - 2$ of which contain neither $t_1$ nor $t_2$. Let $C$ be the union of $k = d_J(x) - \overline{d_{U_2}}(w) + 1$ of these $d_J(x) - 2$ connected components. Let $J' = J - C$ and let $H' = H \cup C$. Note that $d_{J'}(x) = \overline{d_{U_2}}(w) - 1$. It follows that $|J'| \le |J| \le n - 2m + 1 < |U_2|$ and that $|H'| = n - 1 - |J'| \le n - 1 - d_{J'}(x) \le 13n/18 - 1 < n - 2m + 2 = |U_1|$ since $5n/18 > 2m - 3$. Also note that $d_{H'}(x) = d_{T_n}(x) - d_{J'}(x) < 5n/18 \le \overline{d_{U_1}}(w) - 1$ since $d_{T_n}(x) \le \Delta(T_n) < 5n/9$ by Claim 4.4. Now applying the embedding described above with the sets $H'$ and $J'$ in place of $H$ and $J$ yields that $UC_n$ is a subgraph of $\overline{G}$ and a contradiction.
\end{proof}

\begin{claim}
There are subsets $U'_1 \subseteq U_1$ and $U'_2 \subseteq U_2$ such that $|U'_1| = |U_1| - 1$ and $|U'_2| = |U_2| - 1$ satisfying that for each $w \in W$, either $d_{U'_1}(w) \le m - 1$ or $d_{U'_2}(w) \le m - 1$.
\end{claim}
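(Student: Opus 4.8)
The plan is to adapt the proof of Claim 3.6 to the present setting, first splitting into two cases according to whether $\overline{G}$ contains two vertex-disjoint edges between $U_1$ and $U_2$; this split is forced on us because Claim 4.6 and Claim 4.7 are only available under such hypotheses. In either case I will take $U_1'$ and $U_2'$ to be $U_1$ and $U_2$ with one suitably chosen vertex removed from each, so that $|U_1'| = |U_1| - 1$ and $|U_2'| = |U_2| - 1$ hold automatically.

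\emph{Case 1: $\overline{G}$ has two disjoint edges between $U_1$ and $U_2$.} Then both Claim 4.6 and Claim 4.7 apply. Let $U_1'$, $U_2'$ be $U_1$, $U_2$ with an arbitrary vertex deleted from each, and fix $w \in W$. By Claim 4.7 we may assume without loss of generality that $\overline{d_{U_2}}(w) < 1 + 5n/18$. Suppose for contradiction that $d_{U_1'}(w) \ge m$, and put $S = N_{U_2'}(w) \subseteq U_2$. Then $|S| = d_{U_2'}(w) \ge |U_2| - 1 - \overline{d_{U_2}}(w) > 13n/18 - 2m$, using $|U_2| \ge n - 2m + 2$. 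For each $x \in N_{U_1'}(w) \subseteq U_1$, Claim 4.6 gives $\overline{d_{U_2}}(x) \le 2m - 2$, hence $d_S(x) \ge |S| - (2m - 2) > 13n/18 - 4m + 2 \ge m$, the last inequality holding because $n \ge m^2 - m + 1$ and $m \ge 18$. Since $|N_{U_1'}(w)| \ge m$, greedily choosing $x_1, \dots, x_m \in N_{U_1'}(w)$ together with distinct neighbours $s_1, \dots, s_m \in S$ succeeds and produces an $m$-matching in $G$ between $N_{U_1'}(w)$ and $S$; together with $w$ this gives an $F_m$ in $G$, a contradiction. Hence $d_{U_1'}(w) \le m - 1$, and symmetrically if instead $\overline{d_{U_1}}(w) < 1 + 5n/18$ then $d_{U_2'}(w) \le m - 1$.

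\emph{Case 2: $\overline{G}$ has no two disjoint edges between $U_1$ and $U_2$.} A bipartite graph containing no matching of size two is a star, so there is a vertex $y^* \in U_1 \cup U_2$ incident in $\overline{G}$ to every edge between $U_1$ and $U_2$ (if there are no such edges, take $y^*$ arbitrary). Without loss of generality $y^* \in U_1$; set $U_1' = U_1 - \{y^*\}$ and let $U_2'$ be $U_2$ with an arbitrary vertex removed. Then $\overline{G}$ has no edge between $U_1'$ and $U_2'$, so in $G$ every vertex of $U_1'$ is adjacent to every vertex of $U_2'$. Fix $w \in W$ and suppose for contradiction that $d_{U_1'}(w) \ge m$ and $d_{U_2'}(w) \ge m$. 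Choosing $m$ neighbours of $w$ in each of $U_1'$ and $U_2'$ and pairing them off arbitrarily gives an $m$-matching in $G$, since $U_1'$ and $U_2'$ are disjoint and all cross pairs are edges of $G$; as $w \notin U_1' \cup U_2'$, this is an $F_m$ in $G$, a contradiction. So $d_{U_1'}(w) \le m - 1$ or $d_{U_2'}(w) \le m - 1$, completing this case and the claim.

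I expect the only real subtlety to be Case 2: the point is that the failure of a disjoint pair of $\overline{G}$-edges between $U_1$ and $U_2$ lets us pass, after deleting a single star-centre vertex, from a nearly complete bipartite structure of $G$ on $U_1 \cup U_2$ to an exactly complete bipartite one on $U_1' \cup U_2'$, which is precisely what makes the matching argument go through without appeal to Claims 4.6 and 4.7. The arithmetic in Case 1 (verifying $13n/18 - 4m + 2 \ge m$) is routine given $n \ge m^2 - m + 1$ and $m \ge 18$.
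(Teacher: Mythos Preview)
Your proof is correct and follows essentially the same approach as the paper's. The paper's Case 1 is organized slightly differently (it assumes $\overline{d_{U_1}}(w)$ is small and bounds $d_{U_2}(w)$ rather than $d_{U_1'}(w)$, deriving the stronger conclusion $d_{U_i}(w)\le m-1$ before passing to arbitrary one-vertex-smaller subsets), but the underlying matching-and-$F_m$ argument is the same, and Case 2 matches the paper almost verbatim.
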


\begin{proof}
First we consider the case in which there are two disjoint edges in $\overline{G}$ between $U_1$ and $U_2$. By Claim 4.7, for each $w \in W$, either $\overline{d_{U_1}}(w) < 1 + 5n/18$ or $\overline{d_{U_2}}(w) < 1 + 5n/18$. Assume without loss of generality that $\overline{d_{U_1}}(w) < 1 + 5n/18$. If $S = N_{U_1}(w)$, then $|S| = d_{U_1}(w) > |U_1| - 5n/18 - 1$. Now assume for contradiction that $d_{U_2}(w) \ge m$ and let $x_1, x_2, \dots, x_m \in U_2$ be $m$ neighbors of $w$ in $U_2$. By Claim 4.6, $\overline{d_{U_1}}(x_i) \le 2m - 2$ for each $1 \le i \le m$ and thus
$$d_S(x_i) \ge |S| - \overline{d_{U_1}}(x_i) > |U_1| - 2m + 1 - 5n/18 \ge 13n/18 - 4m + 3 \ge m$$
for all $1 \le i \le m$ because $n \ge m^2 - m + 1 \ge 18(5m - 3)/13$ because $m \ge 8$. Thus there is a matching in $G$ between $x_1, x_2, \dots, x_m$ and a subset of size $m$ of $S$. These $2m$ vertices together with $w$ form an $F_m$ subgraph of $G$, which is a contradiction. Therefore $d_{U_2}(w) \le m - 1$. By a symmetric argument, if $\overline{d_{U_2}}(w) < 1 + 5n/18$ it follows that $d_{U_1}(w) \le m - 1$. Now taking any size $|U_1| - 1$ and $|U_2| - 1$ subsets of $U_1$ and $U_2$ as $U_1'$ and $U_2'$, respectively, yields the claim.

Now consider the case in which there are no two disjoint edges in $\overline{G}$ between $U_1$ and $U_2$. It follows that all of the edges between $U_1$ and $U_2$ in $\overline{G}$ must be incident to a common vertex $y$. Without loss of generality, assume that $y \in U_1$ and let $U_1' = U_1 - \{ y \}$. It follows that the parts $U_1'$ and $U_2$ induce a complete bipartite subgraph of $G$. If $d_{U_1'}(w) \ge m$ and $d_{U_2}(w) \ge m$ for any $w \in W$, this would again yield an $F_m$ subgraph of $G$ centered at $w$. Therefore either $d_{U_1'}(w) \le m - 1$ or $d_{U_2}(w) \le m - 1$. Similarly if $y \in U_2$, then setting $U_2' = U_2 - \{ y \}$ yields analogous results. Combining these cases yields the claim.
\end{proof}

The remainder of the proof of Theorem 5 is similar to the proof of Theorem 4 after Claim 3.6. Where details are the same, we refer to the proof of Theorem 4. Let $Z = V(G) - (X_1 \cup Y_1 \cup X_2 \cup Y_2)$ and note that $Z \subseteq W$ and $|Z| = 2m - 3$. As in the proof of Theorem 4, we may assume that there is a set $Z_1 \subseteq Z$ such that $|Z_1| = m - 1$ and each $w \in Z_1$ satisfies that $d_{U_1'} (w) \le m - 1$. Let $Z_1 = \{ z_1, z_2, \dots, z_{m-1} \}$. We now consider two cases.

\begin{casenew}
$|L(T_n)| \ge 2m + 2$.
\end{casenew}

Using a similar procedure as in Case 1 of the proof of Theorem 4, choose $D$ to be a set of $|D| = m - 1$ leaves chosen such that $t_2 \not \in D$, $d \le m - 4$ where $d$ is the maximum value $d_{D}(x)$ over all $x \in V(T_n) - D$. This is possible because $|L(T_n)| \ge 2m + 2$, Claim 4.4 implies that no $x \in V(T_n)$ is adjacent to more than $2m - 2$ leaves and $m \ge 3$. Define $d_i$ for $1 \le i \le m - 1$ and $y_i$ for $1 \le i \le k = |N_{T_n}(D)|$ as in the proof of Theorem 4. Since each leaf in $D$ is adjacent to one vertex $y_i$ and $d_D(y_j) = d$ for some $j$, it follows that $k \le m - d$. Consider the following embedding of $UC_n$ to $\overline{G}$. Begin by mapping $d_i$ to $z_i$ for all $1 \le i \le m - 1$. For each $1 \le i \le k$, define $N_i$ as in the proof of Theorem 4. By Claim 4.8, each $z_i$ is not adjacent to at most $m - 1$ vertices of $X_1 \cap U_1' \subseteq U_1'$ in $\overline{G}$. The number of vertices in $X_1 \cap U_1'$ adjacent to all vertices in $N_i$ in $\overline{G}$ is at least
\begin{align*}
|X_1 \cap U_1'| - |N_i| \cdot (m - 1) &\ge |X_1| - 1 - d(m - 1) \\
&\ge(m - 3 - d)(m - 1) \ge m - d \ge k
\end{align*}
where the next to last inequality follows from the fact that $1 \le d \le m - 4$. Therefore greedily selecting $k$ vertices $u_1, u_2, \dots, u_k$ such that $u_i$ is adjacent to all vertices in $N_i$ in $\overline{G}$ and $u_i \in X_1$ for all $1 \le i \le k$ succeeds. As in the proof of Theorem 4, we have that
$$|A(T_n) \cap (\{ y_1, y_2, \dots, y_k \} \cup \{t_1, t_2\})| + |A(T_n) \cap D| \le 2 + |N_1| + |N_2| + \cdots + |N_k| = m + 1.$$
By Claim 3.2, since $|V(T_n - D)| = n - m + 1 = |X_1| + |Y_1|$, the forest $T_n - D$ can be embedded to $X_1 \cup Y_1$ in $\overline{G}$ such that $y_i$ is mapped to $u_i$ for all $1 \le i \le k$ and $t_1$ and $t_2$ are mapped to vertices in $X_1$. This yields a successful embedding of $UC_n$ to $\overline{G}$, which is a contradiction.

\begin{casenew}
$|L(T_n)| \le 2m + 1$.
\end{casenew}

By Lemma 1, if $F = \{ t_1, t_2 \}$, there is a subset $K \subseteq A(T_n)$ such that $F \cap K = \emptyset$, each $w \in K$ satisfies $d_{T_n}(w) = 2$ and $w$ is not adjacent to any leaves of $T_n$, no two vertices in $K$ have a common neighbor and
$$|K| \ge \frac{1}{4} \left( n - 8 |L(T_n)| + 8 \right).$$
Now note that
$$|K| + |L(T_n)| \ge \frac{1}{4} (n - 4 |L(T_n)| + 8) \ge \frac{1}{4} \left( n - 8m + 4 \right) > 2m + 5$$
because $n \ge m^2 - m + 1 > 16m + 16$ for $m \ge 18$. Therefore $|K| + |L(T_n)| \ge 2m + 6$ and $|K| \ge 5$ since $|L(T_n)| \le 2m + 1$. Now let $D$ be a set of size $m - 1$ consisting of $\min\{|K|, m - 2 \} \ge 5$ vertices in $K$ and its remaining vertices chosen from $L(T_n) - \{ t_2 \}$. Since $K$ does not contain $t_1$ or $t_2$ and $t_1$ is not a leaf of $T_n$, $D$ also does not contain $t_1$ or $t_2$. Furthermore if $d$ is the maximum value of $d_D(x)$ over all $x \in V(T_n) - D$, then $d \le |D| - |D \cap K| + 1 \le m - 5$ since no vertex is adjacent to two vertices in $K$. Note that $D$ contains at least one leaf.

Define $d_i, y_i$ and $N_i$ as in Case 1. Since each vertex in $D$ has degree at most two and $D$ contains at least one leaf, it follows that $k \le 2m - 3$. As in Case 1, the number of vertices in $X_1 \cap U_1'$ adjacent to all vertices in $N_i$ in $\overline{G}$ is at least
$$|X_1 \cap U_1'| - |N_i| \cdot (m - 1) \ge (m - 3 - d)(m - 1) > 2m - 3 \ge k.$$
Therefore, it again follows that greedily selecting $k$ vertices $u_1, u_2, \dots, u_k$ such that $u_i$ is adjacent to all vertices in $N_i$ in $\overline{G}$ and $u_i \in X_1$ for all $1 \le i \le k$ succeeds. Now note that since $K \subseteq A(T_n)$, it follows that the neighbors of vertices in $K \cap D$ are in $B(T_n)$. Thus $A(T_n) \cap \{ y_1, y_2, \dots, y_k \}$ includes only neighbors of leaves in $L(T_n) \cap D$. By the same reasoning as in Case 2 in the proof of Theorem 4,
$$|A(T_n) \cap (\{ y_1, y_2, \dots, y_k \} \cup \{ t_1, t_2 \})| + |A(T_n) \cap D| \le 2 + |D| = m + 1.$$
As in the previous case, by Claim 3.2, since $|V(T_n - D)| = n - m + 1 = |X_1| + |Y_1|$, the forest $T_n - D$ can be embedded to $X_1 \cup Y_1$ in $\overline{G}$ such that $y_i$ is mapped to $u_i$ for all $1 \le i \le k$ and $t_1$ and $t_2$ are mapped to vertices in $X_1$. This yields a successful embedding of $UC_n$ to $\overline{G}$, which is a contradiction. This completes the proof of Theorem 5.

\section{Conclusions and Future Work}

We remark that Claim 3.4 can be strengthened to show that $\overline{d_{U_2}}(w) \le m - 2$ for each $w \in U_1$ and $\overline{d_{U_1}}(w) \le m - 2$ for each $w \in U_2$ by adapting an argument in \cite{zhang2015ramsey}. However, this stronger bound was not necessary for our proof of Theorem 4 and thus omitted for simplicity. Claim 4.6 can be strengthened in a similar way. Theorem 2 and Theorem 4 together show Conjecture 1 is true except for the finitely many pairs $(n, m)$ satisfying that $6 \le m \le 8$ and $m^2 - m + 1 \le n < 3m^2 - 2m - 1$. In fact, examining our proof of Theorem 4 carefully yields that Conjecture 1 has been proven to hold unless $6 \le m \le 8$ and $n < \min \{ 8m, 3m^2 - 2m - 1 \}$. We also believe that a careful analysis of our method could extend our proof of Theorem 4 to the case in which $m = 8$. A direction for future work is to refine our proof of Theorem 4 to reduce the number of these pairs. Another direction for future work is to reduce the requirement $m \ge 18$ in Theorem 5, which we conjecture is true for smaller values of $m$.

Another possible direction for future work is to study the minimum threshold $k(n)$ such that if $G$ is any graph with at least $k(n)$ cycles and $n$ vertices, then $R(G, F_m) \ge 2n$ for all $n$. Our results show that $k(n) \ge 2$ for all $n$. Let $C_s^{t}$ denote the graph consisting of $t$ edge-disjoint copies of the cycle $C_s$ that share a single common vertex. Another direction for future work is to investigate $R(T_n, C_s^{t})$ by varying $t$, $n$ or both. The general case appears to be difficult but examining this value for $C_4^{t}$ with small or potentially all values of $t$ may be reasonable.

\section*{Acknowledgements}

This research was conducted at the University of Minnesota Duluth REU and was supported by NSF grant 1358695 and NSA grant H98230-13-1-0273. The author thanks Joe Gallian for suggesting the problem and Benjamin Gunby and Joe Gallian for helpful comments on the manuscript.


\begin{thebibliography}{9}
\bibitem{brennan2016}
M. Brennan, Ramsey numbers of trees versus odd cycles, Electronic J. Combin. \textbf{23} (2016), no. 3, \#P3.2.

\bibitem{burr1981ramsey}
S.A. Burr, Ramsey numbers involving graphs with long suspended paths, J. London Math. Soc. \textbf{2} (1981), no. 3, 405-413.

\bibitem{burr1982ramsey}
S.A. Burr, P. Erd\H{o}s, R.J. Faudree, C.C. Rousseau, and R.H. Schelp, Ramsey numbers for the pair sparse graph-path or cycle, Trans. Amer. Math. Soc. \textbf{269} (1982), no. 2, 501-512.

\bibitem{chvatal1977tree}
V. Chv\'{a}tal, Tree-complete graph Ramsey numbers, J. Graph Theory \textbf{1} (1977), no. 1, 93-93.

\bibitem{chvatal1972generalized1}
V. Chv\'{a}tal and F. Harary, Generalized Ramsey theory for graphs. II. small diagonal numbers, Proc. Amer. Math. Soc. \textbf{32} (1972), no. 2, 389-394.

\bibitem{chvatal1972generalized2}
V. Chv\'{a}tal and F. Harary, Generalized Ramsey theory for graphs. III. small off-diagonal numbers, Pacific J. Math. \textbf{41} (1972), no. 2, 335-345.

\bibitem{chvatal1973generalized}
V. Chv\'{a}tal and F. Harary, Generalized Ramsey theory for graphs. I. diagonal numbers, Periodica Math. Hungarica \textbf{3} (1973), no. 1-2, 115-124.

\bibitem{1158044}
L. Droogendijk, Graph with cycles of every length, Math. Stack Exchange, v. Feb. 20, 2015.

\bibitem{li1996fan}
Y. Li and C.C. Rousseau, Fan-complete graph Ramsey numbers, J. Graph Theory \textbf{23} (1996), no. 4, 413-420.

\bibitem{radziszowski1994small}
S.P. Radziszowski, Small Ramsey numbers, Electron. J. Combin. Dynamic Surveys DS1, Revision 14 (2014).

\bibitem{salman2006path}
A.N.M. Salman and H. Broersma, Path-fan Ramsey numbers, Discrete Applied Math. \textbf{154} (2006), no. 9, 1429-1436.

\bibitem{shi2010ramsey}
L. Shi, Ramsey numbers of long cycles versus books or wheels, European J. Combin. \textbf{31} (2010), no. 3, 828-838.

\bibitem{zhang2015ramsey}
Y. Zhang, H. Broersma, and Y. Chen, Ramsey numbers of trees versus fans, Discrete Math. \textbf{338} (2015), no. 6, 994-999.
\end{thebibliography}

\end{document}